\def\fCenter{\; \vdash\; }
\newcommand{\catset}{\ensuremath{\mathbf{Set}}}
\newcommand{\qcb}{{\mathcal{B}}}
\newcommand{\qcc}{{\mathcal{C}}}
\newcommand{\qcd}{{\mathcal{D}}}
\newcommand{\flc}{\ensuremath{\mathbb{C}}}
\newcommand{\fld}{\ensuremath{\mathbb{D}}}
\newcommand{\fle}{\ensuremath{\mathbb{E}}}
\newcommand{\flx}{\ensuremath{\mathbb{X}}}
\newcommand{\fly}{\ensuremath{\mathbb{Y}}}
\newcommand{\tope}{\ensuremath{\mathcal{E}}}
\newcommand{\topf}{\ensuremath{\mathcal{F}}}
\newcommand{\bbc}{\mathbb{C}}
\newcommand{\catcat}{\ensuremath{\mathbf{Cat}}}
\newcommand{\catfp}{\ensuremath{\mathbf{Fp}}}
\newcommand{\cattwocat}{\ensuremath{\mathbf{2}\text{-}\mathfrak{Cat}}}
\newcommand{\cattrip}{\ensuremath{{\mathbf{Trip}}}}
\newcommand{\cattop}{\ensuremath{{\mathbf{Top}}}}
\newcommand{\catqtop}{\ensuremath{{\mathbf{QTop}}}}
\newcommand{\twocata}{\ensuremath{\mathbf{A}}}
\newcommand{\twocatb}{\ensuremath{\mathbf{B}}}
\newcommand{\twocatc}{\ensuremath{\mathbf{C}}}
\newcommand{\twocatd}{\ensuremath{\mathbf{D}}}
\newcommand{\twocate}{\ensuremath{\mathbf{E}}}
\newcommand{\twocatx}{\ensuremath{\mathbf{X}}}
\newcommand{\pcaa}{\ensuremath{\mathcal{A}}}
\newcommand{\pcaas}{\ensuremath{\mathcal{A}_\#}}
\newcommand{\trip}{\ensuremath{\EuScript{P}}}
\newcommand{\triq}{\ensuremath{\EuScript{Q}}}
\newcommand{\rtas}{\ensuremath{\mathbf{RT}(\pcaas)}}
\newcommand{\rtaas}{\ensuremath{\mathbf{RT}(\pcaa, \pcaas)}}
\newcommand{\rtaaso}{\ensuremath{\mathbf{RT}_o(\pcaa, \pcaas)}}
\newcommand{\rtaasc}{\ensuremath{\mathbf{RT}_c(\pcaa, \pcaas)}}
\newcommand{\true}{\ensuremath{\mathsf{true}}}
\newcommand{\false}{\ensuremath{\mathsf{false}}}
\newcommand{\vtp}{\mathopen:\,}				
\newcommand{\ttp}{\mathrel:}				
\newcommand{\csep}{\mathrel|\,}				
\newcommand{\tripower}{{}\mathfrak{P}}			
\newcommand{\op}{\mathsf{op}}				
\newcommand{\prop}{\ensuremath{\mathsf{Prop}}}		
\newcommand{\triptr}{\ensuremath{\mathsf{tr}}}		
\newcommand{\toptr}{\ensuremath{\mathsf{t}}}		
\newcommand{\id}{\mathrm{id}}				
\newcommand{\qdot}{\,.\hspace{.3mm}}				
\newcommand{\objects}[1]{#1}
\newcommand{\funs}{{\boldsymbol{S}}}			
\newcommand{\funt}{{\boldsymbol{T}}}			
\newcommand{\funu}{{\boldsymbol{U}}}
\newcommand{\funf}{{\boldsymbol{F}}}
\newcommand{\fung}{{\boldsymbol{G}}}
\newcommand{\eq}{\mathsf{eq}}
\newcommand{\coloneq}{\mathrel{\mathop:}=}
\newcommand{\sem}[1]{{\llbracket #1 \rrbracket}}
\newcommand{\semp}[1]{{\llbracket #1 \rrbracket}}
\newcommand{\commacat}[2]{#1\mathopen\downarrow#2}
\newcommand{\spec}{{\mathfrak{Spec}}}
\newcommand{\supp}{\mathsf{supp}}
\newcommand{\bool}{\mathbb{B}}
\newcommand{\effi}{{(F,\Phi)}}
\newcommand{\ggamma}{{(G,\Gamma)}}
\newcommand{\cro}{{(C,\rho)}}
\newcommand{\dsi}{{(D,\sigma)}}
\newcommand{\predeq}{\mathclose=}
\newcommand{\tttc}{tripos-to-topos construction}
\newcommand{\reg}{\ar@{|->}}
\newcommand{\mono}{\ar@{ >->}}
\newcommand{\cocov}{\ar@{ |>->}} 
\newcommand{\depi}{\ar@{->>}}
\newcommand{\cov}{\ar@{-|>}}
\newcommand{\monepi}{\ar@{ >->>}}
\newcommand{\dashed}{\ar@{-->}}
\newcommand{\dashprof}{\ar@{-->}|-*=0@{|}}
\newcommand{\ppair}[3]{{\ar@<3pt>[#1]^{#2}\ar@<-3pt>[#1]_{#3}}}
\newcommand{\nohead}{\ar@{-}}
\renewcommand{\to}{\rightarrow}
\newcommand{\emono}{\rightarrowtail}
\newcommand{\eepi}{\twoheadrightarrow}
\newcommand{\eiso}{\stackrel{\cong}{\longrightarrow}}
\newcommand{\ecocov}{\mathrel{\raisebox{.26mm}{$\scriptstyle\triangleright$}\hspace{-1.5mm}\rightarrow}}
\newcommand{\emonepi}{\rightarrowtail\hspace{-2.9mm}\twoheadrightarrow}
\newcommand{\ecov}{\rightarrowtriangle}
\newcommand{\ecovleft}{\leftarrowtriangle}
\newcommand{\ve}{\varepsilon}
\newcommand{\sdi}[1]{\vcenter{\hbox{\includegraphics{#1.pdf}}}}
\newcommand{\qtext}[1]{\quad\text{#1}\quad}
\newcommand{\regular}{regular}
\newcommand{\subc}{\operatorname{sub}_c}
\newcommand{\vertor}{\mathrel|}
\newcommand{\imp}{\Rightarrow}
\newcommand{\cxymatrix}[1]{\vcenter{\xymatrix{#1}}}
\newcommand{\funcoc}{\mathbf{coc}}
\newcommand{\pereqv}{\mathsf{eqv}}
\newcommand{\bftr}{{\funf\trip}}
\newcommand{\bsqc}{{\funs\qcc}}
\newcommand{\fsc}{\funf\funs\qcc}
\newcommand{\fsd}{\funf\funs\qcd}
\newcommand{\fsfp}{\funf\funs\funf\trip}
\newcommand{\wcol}{\; : \;}
\newcommand{\qcol}{\quad : \quad}
\newcommand{\qarr}{\quad \to \quad}
\newcommand{\pseta}{\boldsymbol{\eta}}
\newcommand{\psve}{\boldsymbol{\epsilon}}
\newcommand{\crefl}[1]{{\overline{#1}}}
\newcommand{\utc}{{\funu\funt\qcc}}
\newcommand{\defpred}{\quad\coloneq\quad}
\newcommand{\idefpred}{\;\coloneq\;}
\newcommand{\defcon}{\quad\equiv\quad}
\newcommand{\tworeg}{geometric}
\newcommand{\sigs}{{\mathbf{S}}}
\newcommand{\sigf}{{\mathbf{F}}}
\newcommand{\sigr}{{\mathbf{R}}}
\newcommand{\hots}{\mathcal{T}} 
\newcommand{\ptype}{\mathbf{P}}
\newcommand{\hotsig}{\hots(\sigs)}
\newcommand{\hotsigp}{\hots(\sigs^\trip)}
\newcommand{\fots}{\mathcal{T}_0}
\newcommand{\holt}[1]{\mathcal{L}({#1})}
\newcommand{\folt}[1]{\mathcal{L}_0({#1})}
\newcommand{\unary}[3]{
\AxiomC{$#1$}
\RightLabel{#2}
\UnaryInfC{$#3$}
\DisplayProof
}
\newcommand{\binary}[4]{
\AxiomC{$#1$}
\AxiomC{$#2$}
\RightLabel{#3}
\BinaryInfC{$#4$}
\DisplayProof
}
\newcommand{\ternary}[5]{
\AxiomC{$#1$}
\AxiomC{$#2$}
\AxiomC{$#3$}
\RightLabel{#4}
\TrinaryInfC{$#5$}
\DisplayProof
}
\newcommand{\into}[2]{#1\{#2\}}
\newcommand{\perprod}{\Join}
\newcommand{\ilbracks}[1]{\lrbracks{#1}}
\newcommand{\lrbracks}[1]{\bigl(#1\bigr)}
\newcommand{\nhom}{\mathbf{hom}}
\newcommand{\preq}{\mathfrak{Spec}}
\newcommand{\oplax}{\mathfrak{Oplax}}
\newcommand{\comp}{\operatorname{comp}}
\newcommand{\pseudo}{\mathfrak{Pseudo}}
\newcommand{\bec}{bicategory enriched category}
\newcommand{\famf}{\mathrm{fam}}
\newcommand{\Famf}{\mathrm{Fam}}
\newcommand{\adj}{\dashv}
\newcommand{\emar}{\ar@{}}
\newcommand{\adjr}[2]{
\ar@/_6pt/[r]_{#1}
\ar@{}[r]|\top
\ar@{<-}@/^6pt/[r]^{#2}
}
\newcommand{\adjl}[2]{
\ar@/^6pt/[l]^{#1}
\ar@{}[l]|\top
\ar@{<-}@/_6pt/[l]_{#2}
}
\newcommand{\adjd}[2]{
\ar@/_6pt/[d]_{#1}
\ar@{}[d]|\dashv
\ar@{<-}@/^6pt/[d]^{#2}
}
\newcommand{\adju}[2]{
\ar@/^6pt/[u]^{#1}
\ar@{}[u]|\dashv
\ar@{<-}@/_6pt/[u]_{#2}
}
\numberwithin{equation}{section}		
\DeclareMathAlphabet{\mathpzc}{OT1}{pzc}{m}{it}
\title{A 2-Categorical Analysis of the \\ Tripos-to-Topos Construction}
\author{Jonas Frey}
\begin{document}

\maketitle

\begin{abstract}
We characterize the tripos-to-topos construction of Hyland, Johnstone and
Pitts as a biadjunction in a bicategory enriched category of equipment-like
structures. These abstract concepts are necessary to handle the presence of
\emph{oplax} constructs --- the construction is only \emph{oplax} functorial
on certain classes of cartesian functors between triposes. 

A by-product of our analysis is the decomposition of the \tttc{} into two steps,
the intermediate step being a weakened version of quasitoposes.
\end{abstract}

\pagebreak

\tableofcontents

\pagebreak

\section{Introduction}\label{sec_intro}

Triposes were introduced by Hyland, Johnstone and Pitts~\cite{hjp80} as a
framework which 
enables to generalize the construction of the category of sheaves
on a locale
(complete
Heyting algebra).
Their motivating observations were that
\begin{itemize}
 \item the alternative description of sheaves on a locale $A$ as `$A$-valued
sets' which was independently introduced by Higgs~\cite{higgs1973category},
and by Fourman and Scott~\cite{fourman1979sheaves}, really only depends on the
fibered
poset $\famf(A):\Famf(A)\to\catset$ (the \emph{family fibration} of $A$, see
\cite[Definition~1.2.1]{jacobs2001categorical}), and
\item
Kleene's number realizability gives rise to a fibration of preorders on
which Higgs, Fourman and Scott's construction is
defined and yields a
topos as well!
\end{itemize}
These observations lead to the question which properties of the fibration are
really needed to allow the construction of toposes, and the definition of
tripos gives sufficient conditions (but still stronger than necessary ones, as
Pitts points out in~\cite{pitts2002}).

The topos that arises from the tripos associated to Kleene's number
realizability is Hyland's \emph{effective topos}, its introduction marks the
starting point a whole new research field: \emph{categorical
realizability}\footnote{For an introduction to this field, we refer to Jaap
van~Oosten's recent textbook~\cite{vanoosten2008realizability}.}.

The cross-fertilization between realizability and topos theory/category theory
has proven fruitful to categorical logicians and topos theorists on the one
hand, since it provides interesting examples of non-Grothendieck toposes, and
to realizability on the other hand, since it brought new categorical tools, and
a more `semantic' way of thinking to a field which had traditionally been
frightening due to its high amount of syntactical formalism. 
The new perspective on realizability lead to the discovery of new, `global'
connections between different notions of realizability, making use of
geometrically motivated topos theoretic concepts such as \emph{geometric
morphism} and \emph{subtopos} (associated to a Lawvere-Tierney topology). As
examples, we mention
\begin{itemize}
\item 
Awodey, Birkedal and Scott's work~\cite{awodey2002local}, where a
\emph{local%
\footnote{
A geometric morphism is called \emph{local} if its unit is invertible and the
direct image part has a further right adjoint.}
geometric
morphism}
\[
\Delta\adj\Gamma:\rtaaso\to\rtas\adj\nabla
\]
is exhibited between the \emph{relative} realizability topos $\rtaaso$ induced
by an
inclusion $\pcaas\subset\pcaa$ of partial combinatory algebras and the
realizability topos $\rtas$, and
\item
Birkedal and van~Oosten's paper~\cite{birkoost} which
describes how the relative realizability topos $\rtaaso$ and the
\emph{modified%
\footnote{In light of~\cite{lietz2002impredicativity}, it is arguable  whether
modified (relative) realizability should really be viewed as a topos or if
$\rtaaso$ rather represents something else (since modified realizability has a
typed notion of realizer), but this doesn't bother us here.
}
 relative} realizability topos $\rtaasc$ can be viewed as open and closed
complementary subtoposes 
\[
\rtaasc\hookrightarrow\rtaas\hookleftarrow\rtaaso
\]
of a larger topos $\rtaas$\footnote{%
See e.g.~\cite[A4.5]{elephant1} for the definition of open and closed
subtoposes.}.
\end{itemize}
Abstractly, geometric morphisms and subtoposes are just adjunctions and
idempotent monads in the 2-category of toposes and cartesian functors, and we
have analogous concepts in an appropriate 2-category of triposes.
Furthermore, the geometric morphisms and subtoposes in the previous
examples are induced by analogous constructs between the corresponding
triposes. 

It turns out that it is much easier to make
calculations on the level of triposes than on the level of toposes, to the
extent that we would like to systematically reduce questions about functors
between tripos-induced toposes to questions about morphisms between the
corresponding triposes. But in order to do this, we need an abstract (i.e.,
universal) characterization of the construction which maps triposes to toposes
and morphisms between triposes to functors between toposes. 
\emph{This is the
motivation and the objective of the present work.
}
\medskip

The question for a universal characterization of the \tttc{} is not a new one.
Already in 2002, Pitts wrote~\cite{pitts2002}:
\begin{quote}
The construction itself can be seen as the
universal solution to the problem of realizing the predicates of a first order
hyperdoctrine as
subobjects in a logos with effective equivalence relations.
\end{quote}
In a more recent, unpublished work~\cite{rosmai08}, Rosolini and Maietti
decompose the \tttc{} into a succession of fibrational completions.

These approaches answer the question for a universal characterization, but
are not adequate as a framework for the above examples, since they (albeit
implicitly) take place in the 2-categories of triposes and \emph{regular}
tripos morphisms (that is fibered functors that commute with $\wedge$ and
$\exists$), and toposes and \emph{regular} functors. In order to talk about
arbitrary geometric morphisms and subtoposes/sub-triposes, this is too
restrictive --- we want to talk about functors and morphisms which only
preserve finite limits and finite meets, respectively.

Already in \cite{hjp80}, it was observed that it is
possible to construct functors between toposes from tripos morphisms that
merely commute with finite limits, but the abandonment of regularity leads to
complications which require more sophisticated 2-dimensional techniques, as the
following example demonstrates.

Let $\bool=\{\true, \false\}$ be the locale of booleans, with
$\false\leq\true$. Then
$\famf(\bool)$ and $\famf(\bool\times\bool)$ are triposes, and the
induced toposes are
equivalent to $\catset$ and $\catset\times\catset$, respectively. Between the
locales we consider
the meet-preserving maps
\[\delta=\langle\id,\id\rangle:\bool\to\bool\times\bool\qquad\mbox{and}
\qquad\wedge:\bool\times\bool\to\bool\]
These maps give rise to tripos morphisms
\[
\xymatrix@1@C+5mm{\famf(\bool)\ar[r]^-{\famf(\delta)}&
\famf(\bool\times\bool)\ar[r]^-{\famf(\wedge)} & \famf(\bool)},
\]
which in turn give rise to functors
which happen to be the familiar
\[
\xymatrix@1@C+7mm{\catset\ar[r]^-{\Delta=\langle\id,\id\rangle}&
\catset\times\catset\ar[r]^-{(-\times-)} & \catset},
\]
Forming the composition of the maps, we get $\wedge\circ\delta=\id_\bool$ and
this gives rise to
the
identity functor. Therefore we obtain a \emph{non-invertible} constraint cell
\[
\xymatrix@C-6mm{
\catset\ar[rd]_\Delta\ar[rr]^\id|{}="1"\xtwocell[rr]{}<\omit>{<3>\eta}
&&\catset\\
&\catset\times\catset\ar[ur]_\times
}
\]
where $\eta_I=\delta_I:I\to I\times I$ is the unit of the adjunction
$\Delta\dashv (-\times-)$.

This means that the \tttc{} does not commute with composition of tripos
morphisms (not even up to isomorphism), and hence it can not be a 2-functor or
a pseudofunctor. The best that we can hope for is for it to be \emph{oplax
functorial}, which means that is commutes with identities and composition up
to non-invertible 2-cell.

This turns out to be a major obstacle, since we would like to characterize the
construction as a kind of left biadjoint to the construction that assigns its
subobject fibration to a topos. Unfortunately, it is known that (op)lax
functors are
very badly behaved --- horizontal compositions like $F\eta$ for example are
simply not definable for
(op)lax functors $F$ and transformations $\eta$ (see
diagram~\eqref{eq_noncomposable} in Section~\ref{suse_preequs}). That means in
particular that we can not
transfer the algebraic definition of biadjunctions by unit, counit and
modifications for the triangle equalities to the (op)lax world in a
straightforward way.

To overcome these problems, we identify a class of oplax functors
and transformations that compose well, more formally we define a three
dimensional category of 2-categories with additional structure (so-called
\emph{pre-equipments}), and corresponding oplax functors and transformations in
which there is an internal notion of biadjunction that fits our purposes.
Ideas like these have come up at different places in the literature already,
in particular in the context of double categories. The most general
treatment can be found in Dominic Verity's thesis~\cite{verity_enriched} on
which we rely heavily.
 
The \tttc{} can then be described as a biadjunction between the pre-equipments
$\cattrip$ of triposes, and the
pre-equipment $\cattop$ of toposes. Trying to
find a comprehensible description of the left adjoint, I observed that the
construction naturally factors through a third pre-equipment --- the
\emph{q-toposes} (suggestions for a better name are welcome), which are a
generalization of quasitoposes where not all finite colimits are required.
The q-toposes can be viewed as giving an official status to the so-called
\emph{weakly complete objects} that already occur in the original
paper~\cite{hjp80} by Hyland, Johnstone and Pitts.

\subsection{Overview of the article}

The article is divided in four main sections.

Section~\ref{sec_pre-equipments} provides the category theoretical background.
We review Verity's notion of \emph{bicategory enriched category}, and
define the \bec{} $\spec$ of pre-equipments, special functors,
special transformations and modifications. Finally, we introduce \emph{special
biadjunctions} which are just biadjunctions in $\spec$ and which we use in
the sequel for our characterization of the \tttc.

In Section~\ref{sec_triposes} we define triposes, define their internal
language, and explain how they form a pre-equipment.

In Section~\ref{sec_q_toposes} we introduce q-toposes, define the
pre-equipment $\catqtop$ of q-toposes, explain how to
interpret higher order intuitionistic logic in a q-topos and prove that the
\emph{coarse objects} in a q-topos form a topos.

Finally, in Section~\ref{sec_tttc}, we give a detailed exposition of the
special adjunctions $\funf\dashv\funs:\catqtop\to\cattrip$ between triposes
and q-toposes and $\funt\dashv\funu:\cattop\to\catqtop$ between q-toposes and
toposes. These special biadjunctions form our characterization and
decomposition of the \tttc.

\subsection{Conventions, preliminaries}\label{suse_conv}

\noindent\textbf{Notation, terminology}

In 2-categories, and in bicategory enriched categories as introduced
in Section~\ref{sec_pre-equipments}, we will write `$\id$' for identities in
all
dimensions, usually appropriately subscripted.

We will normally write $A\in \flc$ instead of $A\in\operatorname{obj}(\flc)$ to
mean that $A$ is an object of a category $\flc$.

\medskip

\noindent\textbf{Strict, strong, lax and oplax}

We consider different kinds of functors and transformations between
2-categories, and we will use the adjectives \emph{strict, strong, lax and
oplax} to specify whether they have identity-, isomorphic or directed
constraint cells. We also refer to strong functors as pseudofunctors. 

For an oplax
functor $F:\twocata\to\twocatb$ and $A\xrightarrow{f}B\xrightarrow{g}C$ in
$\twocata$, the direction of constraint cells is $F(gf)\to Fg\; Ff$ and
$F\id\to \id$, and for an oplax transformation $\eta:F\to
G:\twocata\to\twocatb$, the direction of constrains is
$\eta_B\;Ff\to Gf\;\eta_A$. For lax functors and transformation, the direction
of constraints is the opposite (in parts of the literature, the meaning of
lax and oplax is exchanged for transformations). 

For definitions of strict, strong, and (op)lax functors, and transformations,
see for example Leinster's~\cite{leinster1998basic} (Note that Leinster uses
the traditional terms morphism and homomorphism for lax and strong functors). 

\medskip

\noindent\textbf{Size issues}

We will assemble possible large categories into 2-categories, and then
assemble theses 2-categories into a three dimensional category. Formally, we
need several Grothendieck universes to do this, but since we do not use
concepts where relative sizes are important (such as local smallness) this
does not pose problems (once we accept the existence of Grothendieck
universes), and we will comment no more on that.

A related issue is that we will talk about presheaves of subobjects and
representable presheaves, concepts which only make sense if the involved
categories are well powered and locally small, respectively. While we want to
avoid to appeal to local smallness and well-poweredness since they refer to
relative sizes, we can
always assume the existence of a universe which makes the involved categories
\emph{globally small} (and thus of course well powered).

\medskip

\noindent
\textbf{String diagrams}

In addition to pasting diagrams, we will use string diagrams for
2-categorical reasoning since they are usually more concise and more
importantly they make the structure of the calculations more apparent.
Many string diagrammatic calculi have been developed and investigated for
different kinds of monoidal categories (see~\cite{selinger2009survey} for an
overview), but we will only use the most basic variant, which exists in a
version for monoidal category and a version for bicategories (which we will
use). This basic version for bicategories is presented e.g.\ in
\cite{street1995low}, and since it is very easy, we explain it again here.

The basic idea of string diagrams is that they are a kind of dual graphs of
pasting diagrams, as visualized in the following example for a composite of two
2-cells $\alpha,\beta$ in some generic 2-category $\twocata$.
\[
\vcenter{\xymatrix@-5mm{
& A\ar[ldd]_h\xtwocell[rr(1.1)]{}<\omit>{^<3>\beta} && C\ar[ll]_k\ar[ldd]^f \\
\\
B\xtwocell[rr(1.1)]{}<\omit>{^<-3>\alpha} && B\ar[luu]|g\ar[ll]^\id
}}
\quad\text{becomes}\quad
\sdi{ex0}
\]
In the pasting diagram on the left, objects are vertices, morphisms are edges
and 2-cells can be viewed as faces. In the string diagram on the right, the
2-cells are nodes, the morphisms are edges again, but orthogonal to the edges
in the pasting diagram, if we place the diagrams one upon the other; and the
faces correspond to objects of the 2-category. Observe that the 2-cell
$\alpha:\id_B\to hg$ is drawn as a node with zero inputs and 2 outputs; this is
because in this context we think of $\id_B$ as the composite of the empty list
of
1-cells. Normally we omit much of the typing information, i.e. the labels of
lines and faces, because it clutters the diagrams and can be easily inferred
from the context. Moreover, the orientation of our diagrams is always bottom-up
and right-left, and we omit the redundant arrows on the edges as well.

 We are not too much concerned about formal properties
of the calculus of string diagrams itself, our use of them is heuristic rather
than formalistic. In a sense, we view string diagrams as shorthands for more
rigorous symbolic computations, which can always be reconstructed from them on
demand (and for the size of the diagrams that we are using, this is not only a
theoretical but a practical possibility --- with a bit of practice you can even
read the associated pasting diagram ``between the lines'').

\medskip

\noindent
\textbf{Existence of structures versus chosen
structures}

By a finite limit category, do we mean a category such that for any finite
diagram there exists a limiting cone, or a category equipped with a specific
choice of such limiting cones? --- Normally, one tends to be rather ambiguous
about that, after all suitable choice principles always allow us to postulate
an explicit family of limiting cones, even if we only assumed mere existence
before. When we assemble our categories into 2-categories, however, we have to
be more precise. We demonstrate this with a little example.
Let $\catfp$ be the 2-category of finite product categories and $\catcat$ the
2-category of categories.
We define two 2-functors $\funf,\fung:\catfp\to\catcat$, where
\[
 \funf\flc = \flc\times\flc\qtext{and}\fung\flc = \flc
\]
Now we want to define a transformation $\pseta:\funf\to\fung$ by
\[
 \pseta_\flc(C,D) = C\times D.
\]
This only makes sense if we have chosen products, otherwise the object part of
$\pseta_\flc$ is not well defined!
On an informal level, one may be content to define such functors up to
isomorphism, but at the latest when it comes to verifying coherence axioms of
2-categorical constructions, as we will have to do in Section~\ref{sec_tttc},
we really have to be precise what we are talking about.

Therefore, in the following whenever we talk about categories with certain
limits or colimits, we implicitly require chosen such objects. This is
equivalent to equipping the category with limit/colimit functors, because the
morphism parts can
be
inferred by universality.

\section{Pre-equipments}\label{sec_pre-equipments}

This section introduces the categorical backdrop to make sense of our analysis
of the \tttc{}.

The overall aim of the article is to characterize the \tttc{} as a certain
type of biadjunction.
Since biadjunctions are naturally encountered in three dimensional categories
(just like abstract (1-)adjunctions can be defined in arbitrary bicategories), 
we will explain how pre-equipments form such a three dimensional category.
Before we can do this, however, we have to take yet another step back, and
explain the notion of three dimensional category that we are going to use:
Verity's bicategory enriched categories.

Almost everything in this section can be found in Verity's
thesis~\cite{verity_enriched}, but for reasons of self containedness, because
the thesis has not (yet)
 been published, and since the ideas are introduced
there in much greater generality than we need, we repeat the necessary
definitions and constructions here (mostly without proofs).

In the first subsection, we explain the concept of bicategory enriched
category, which is closely related to --- but more general than --- the notion
of Gray-category, and the abstract concept of biadjunction in bicategory
enriched categories.

In the second subsection, we will then introduce \emph{pre-equipments}, explain
how they form a bicategory enriched category, and will have a closer look at
the ensuing notion of biadjunction between pre-equipments.

\subsection{Bicategory enriched categories}

This section is the attempt to summarize the relevant parts of
Section~1.3 of Verity's thesis~\cite{verity_enriched}.

As bicategory enriched categories are related to the more familiar notion
of Gray categories, we begin by
recalling the ideas behind the latter notion. 

Informally, a Gray category is a
three dimensional category whose hom-objects are 2-categories, where
1-cells induce strictly 2-functorial pre- and postcomposition operations $(-)
f$ and $f
(-)$, and which does \emph{not} have
a
primitive `parallel' composition operation $\theta\eta$ for 2-cells
\[
\xymatrix{
\twocata\xtwocell[r]{}^f_{f'}{\eta} & \twocatb\xtwocell[r]{}^g_{g'}{\theta} &
\twocatc
},
\]
but only specified coherent exchange isomorphisms of type
\[
\theta f'\circ g\eta\eiso g'\eta\circ\theta f\qquad 
\cxymatrix{
fg\ar[r]\ar[d] 
\xtwocell[dr]{}<\omit>{\cong}
& f'g\ar[d]\\
fg'\ar[r] & f'g'
}
\]
between sequentializations of the parallel composition.

Formally, Gray categories are defined as categories enriched in the
category of 2-categories
equipped with a certain symmetric monoidal product, the \emph{Gray product}.
The Gray product is characterized by the natural bijection
\begin{equation}\label{eq_bij_gray}
\cattwocat(\twocata\otimes_G
\twocatb,\twocatc)\cong\cattwocat(\twocata,[\twocatb,\twocatc]),
\end{equation}
where $\cattwocat(-,-)$ denotes the set of 2-functors between two
2-categories, and $[-,-]$ denotes the 2-category of 2-functors,
pseudo-transformations and modifications between two 2-categories.

For bicategory enriched categories, the idea is the same, except that we
replace 2-categories by bicategories and 2-functors by pseudofunctors%
\footnote{
Note that the main complication does not arise from the replacement of
2-categories by bicategories, but rather from the replacement of 2-functors by
pseudofunctors. In fact, we could have done all the definitions here using
only 2-categories, as this is all we will need later, but I opted for
bicategories, since it is closer to Verity's presentation, and the additional
effort is negligible.
}. However, we run into problems if we want to adapt the technique for Gray
categories directly, since there is no tensor product on
2-categories/bicategories satisfying an equation like \eqref{eq_bij_gray} if
we replace 2-functors by pseudofunctors in the definition of $[-,-]$. The
solution is to replace enrichment in monoidal categories by enrichment in
multicategories --- it turns out that there exists a multicategory structure on
bicategories which behaves the way we want. The central definition is the
following:
\begin{definition}[$n$-homomorphism]
Let $\twocata_1,\dots,\twocata_n,\twocatb$ be bicategories. An
\emph{$n$-homomorphism}
\[
F:\twocata_1,\dots,\twocata_n\to\twocatb
\]
is given by
\begin{itemize}
\item An object $F(A_1,\dots,A_n)\in\twocatb$ for each $n$-tuple
$(A_1,\dots,A_n)$ of objects with $A_i\in\twocata_i$.
\item For each $1\leq i\leq n$ and each $(n-1)$-tuple
$(A_l)_{l\neq n}$ of objects with $A_i\in\twocata_i$ a
\emph{pseudofunctor}
\[
F(A_1,\dots,A_{i-1},-,A_{i+1},\dots,A_n):\twocata_i\to \twocatb
\]
enriching the mapping on objects. We will often abbreviate this pseudofunctor
by $F(-_i)$ omitting the constant objects.
\item
For all $1\leq i < j \leq n$, all corresponding $(n-2)$-tuples of objects
(suppressed in the notation), and all $f_i:A_i\to A_i',f_j:A_j\to A_j'$
isomorphic 2-cells
\[
\xymatrix@+.4cm{
F(A_i,A_j)\ar[r]^{F(A_i,f_j)}\ar[d]_{F(f_i,A_j)}
\xtwocell[rd]{}<\omit>{\phantom{xxxxx}F(f_i,f_j)} &
F(A_i,A'_j)\ar[d]^{F(f_i,A'_j)}\\
F(A'_i,A_j)\ar[r]_{F(A'_i,f_j)} & F(A'_i,A'_j)
}
\]
such that
\begin{itemize}
\item
The 1-cells $F(f_i)$ together with the 2-cells $F(f_i,f_j)$ give rise to
pseudo-transformations of type
\[
F(A_i,-_j)\to F(A_i',-_j),
\]
\item
The 1-cells $F(f_j)$ together with the 2-cells $F(f_i,f_j)$ give rise to
pseudo-transformations of type
\[
F(-_i,A_j)\to F(-_i,A_j'),
\]
\item
For each triple $1\leq i<j<k\leq n$, for all $f_i:A_i\to A_i',f_j:A_j\to
A_j',f_k:A_k\to
A_k'$ (and for all implicit $(n-3)$-tuples of objects), we have
\[
\vcenter{\xymatrix@R-6mm@C-11mm{
&& FA_iA_jA_k\xtwocell[dd]{}<\omit>{^}\ar[dll]\ar[drr] \\
FA_i'A_jA_k\ar[dd]\xtwocell[dddrr]{}<\omit>{^}\ar[rrd] &&&&
FA_iA_jA_k'\xtwocell[dddll]{}<\omit>{^}\ar[dd]\ar[lld] \\
&& FA_i'A_jA_k'\ar[dd]\\
FA_i'A_j'A_k\ar[drr] &&&& FA_iA_j'A_k'\ar[dll] \\
&& FA_i'A_j'A_k' \\
}}
=
\vcenter{\xymatrix@R-6mm@C-11mm{
&&
FA_iA_jA_k\ar[dll]\ar[drr]\ar[dd]\xtwocell[dddll]{}<\omit>{^}\xtwocell[dddrr]{}
<\omit>{^} \\
FA_i'A_jA_k\ar[dd] &&&&
FA_iA_jA_k'\ar[dd] \\
&& FA_iA_j'A_k\ar[lld]\ar[rrd]\xtwocell[dd]{}<\omit>{^}\\
FA_i'A_j'A_k\ar[drr] &&&& FA_iA_j'A_k'\ar[dll] \\
&& FA_i'A_j'A_k' \\
}}.
\]
\end{itemize}
\end{itemize}
\end{definition}
Observe that a $0$-homomorphism is just an object of $\twocatb$, and a
$1$-homomorphism is a pseudofunctor.

The next step would be the definition of composition of $n$-homomorphisms, and
the verification of the multicategory axioms. We won't give details here, since
there are no surprises. The definition of composition is just `what you would
expect', and
for the verification that the ensuing structure satisfies the axioms of a
symmetric multicategory, we refer to Verity~\cite{verity_enriched}.

The following lemma is in analogy to~\eqref{eq_bij_gray}.
\begin{lemma}
Let $\twocata_1,\dots,\twocata_{n+1},\twocatb$ be bicategories.
There are natural bijections
\[
\nhom(\twocata_1,\dots,\twocata_{n+1};\twocatb)\cong\nhom
(\twocata_1,\dots,\twocata_{n};\sem{\twocata_{n+1},\twocatb}),
\]
where $\nhom$ denotes sets of $n$-homomorphisms, and $\sem{-,-}$ denotes
the bicategory of pseudofunctors, pseudo-transformations and modifications.
\end{lemma}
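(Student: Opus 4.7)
The plan is to establish the bijection explicitly via a currying operation that isolates the last argument. Given an $(n+1)$-homomorphism $F:\twocata_1,\dots,\twocata_{n+1}\to\twocatb$, I construct an $n$-homomorphism $\tilde F:\twocata_1,\dots,\twocata_n\to\sem{\twocata_{n+1},\twocatb}$ by setting $\tilde F(A_1,\dots,A_n) \coloneq F(A_1,\dots,A_n,-):\twocata_{n+1}\to\twocatb$; this is a pseudofunctor, hence an object of $\sem{\twocata_{n+1},\twocatb}$, by the $i=n+1$ clause of the $n$-homomorphism definition. The inverse direction simply undoes this transposition, interpreting any $n$-homomorphism valued in $\sem{\twocata_{n+1},\twocatb}$ as a family of pseudofunctors in $\twocata_{n+1}$, and hence recovering an $(n+1)$-homomorphism. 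Once these assignments are seen to be mutual inverses on the underlying object data, the content of the lemma reduces to matching the higher coherence data on the two sides.

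For the forward direction, I need to furnish $\tilde F$ with: (a) for each $1\leq i \leq n$ and each tuple of fixed objects, a pseudofunctor $\tilde F(-_i):\twocata_i\to\sem{\twocata_{n+1},\twocatb}$; and (b) for each pair $1\leq i < j\leq n$, constraint 2-cells $\tilde F(f_i,f_j)$ in $\sem{\twocata_{n+1},\twocatb}$. For (a), a 1-cell $f_i:A_i\to A_i'$ is sent to the pseudo-transformation $F(A_1,\dots,f_i,\dots,A_n,-):F(\ldots A_i\ldots,-)\to F(\ldots A_i'\ldots,-)$, whose existence and pseudo-transformation axioms are precisely the content of the original $(i,n+1)$-constraint cells $F(f_i,f_{n+1})$ of $F$. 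The pseudofunctoriality of $\tilde F(-_i)$ is repackaged from the coherence data of the pseudofunctor $F(\ldots,-_i,\ldots)$ together with naturality in the last slot. For (b), the underlying 2-cells of the modifications $\tilde F(f_i,f_j)$ are literally the 2-cells $F(f_i,f_j):F(f_i,A_j')\cdot F(A_i,f_j)\Rightarrow F(A_i',f_j)\cdot F(f_i,A_j)$ of $F$, but they must now be promoted to modifications between pseudo-transformations, which requires verifying a compatibility with every $f_{n+1}$; this compatibility is supplied by the $(i,j,n+1)$-cocycle condition of $F$.

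The main obstacle lies in this last point and, more generally, in carefully tracking which cocycle of $F$ is responsible for which coherence of $\tilde F$. The cocycle axioms for triples $i<j<k\leq n$ translate directly on the nose, and the remaining families of triples $(i,j,n+1)$ all coincide with the modification axioms for the $\tilde F(f_i,f_j)$, so the proof is essentially a bookkeeping exercise — easiest if one fixes a concrete string-diagrammatic normal form for the two sides of the bijection and observes that the formulas agree symbol by symbol. Naturality of the bijection in $\twocata_1,\dots,\twocata_{n+1},\twocatb$ is a separate but routine verification that currying in the final variable commutes with pre- and postcomposition of $n$-homomorphisms, using the fact that the multicategory composition is defined so as to respect the natural slot-by-slot structure. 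Since Verity has already checked the multicategory axioms of $n$-homomorphisms in \cite{verity_enriched}, invoking these, the whole verification reduces to displaying the two transposition maps and the modification structure described above.
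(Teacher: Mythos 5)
Your proposal is correct, and it is exactly the argument the paper has in mind: the paper states this lemma without proof (deferring to Verity's thesis), and the intended proof is precisely this transposition in the last variable, where the $(i,n+1)$ constraint cells $F(f_i,f_{n+1})$ supply the pseudo-transformation data for $\tilde F(f_i)$, the $j=n+1$ pseudo-transformation axioms of the definition supply the modification axioms for the $2$-cell action and the functoriality constraints of $\tilde F(-_i)$, and the $(i,j,n+1)$ cocycle conditions promote the cells $F(f_i,f_j)$ to modifications, with the remaining cocycles for $i<j<k\leq n$ holding componentwise. Nothing essential is missing.
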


A bicategory enriched category is now just given by a set $\twocatx_0$ of
objects, for each
pair $X,Y$ of objects a bicategory $\twocatx(X,Y)$, identity $0$-homomorphisms
$\id_X$ (which are just objects of $\twocatx(X,X)$), and composition
$2$-homomorphisms
\[
\mathrm{comp_{X,Y,Z}}:[\twocatx(X,Y),\twocatx(Y,Z)]\longrightarrow\twocatx(X,Z)
,
\]
subject to \emph{strict} associativity and identity axioms.
In bicategory enriched categories, we call the 0-, 1-, and 2-cells of the
bicategories $\twocatx(X,Y)$ \emph{1-, 2-, and 3-cells} of the bicategory
enriched category, respectively, and we denote horizontal composition of
$1$-, $2$- and $3$-cells by juxtaposition (i.e.\
$\mathrm{comp}_{X,Y,Z}(f,g)=gf$), vertical composition of $2$- and
$3$-cells by $( - \circ -)$, and depth-wise composition of $3$-cells by
$(-\cdot-)$. For $\eta:f\to f'$ in $\twocatx(X,Y)$ and $\theta:g\to g'$ in
$\twocatx(Y,Z)$ we denote the exchange isomorphism for horizontal composition
by
\[
\theta\eta: \theta f'\circ g\eta\eiso g'\eta\circ\theta f.
\]
In pasting form this looks like
\[
\vcenter{\xymatrix@R-2mm@C-2mm{
gf\ar[r]^{g\eta}\ar[d]_{\theta f}\xtwocell[rd]{}<\omit>{\theta\eta} &
gf'\ar[d]^{\theta f'}\\
g'f\ar[r]_{g'\eta} & g'f'
}},
\]
and in string diagrams we denote exchange isomorphisms by braidings\footnote{%
The notation as a braiding is motivated by thinking about bicategory enriched
categories in a three dimensional way (the string diagrams that we use and
that live `locally' in a two dimensional section $\twocatx(X,Y)$ of a
bicategory enriched category can actually be viewed as projections of surface
diagrams), but as mentioned earlier we don't want to talk too much about string
diagrams themselves, so for us the notation as braiding is just a definition
of a shorthand for a pasting diagram denoting a 2-cell in $\twocatx(X,Z)$.
}
\[
\sdi{trans-generic}.
\]

Now that we know what a bicategory enriched category is, we can finally
introduce
the desired abstract notion of biadjunction.

\begin{definition}
Let $\twocatx$ be a bicategory enriched category, and let $A,B$ be objects of
$\twocatx$. A biadjunction between $A$ and $B$ is given by
\begin{align*}
&\text{\textbullet{} 1-cells} & f &: A\to B & g&:B\to A,\\
&\text{\textbullet{} 2-cells} & \eta&: \id_A\to gf &\ve &:
fg\to \id_B\\
&\text{\textbullet{} \emph{invertible} 3-cells} & \mu &:  \id_g\eiso
g\ve\circ \eta g & \nu &: \ve f \circ f\eta \eiso \id_f
\end{align*}
such that the diagrams
\[
\vcenter{\xymatrix@+4mm{
& \id_A\ar[dl]_\eta\ar[dr]^\eta\xtwocell[dd(.65)]{}<\omit>{^\eta\eta}\\
gf\ar[r]|{\eta gf}\ar[dr]_\id\xtwocell[dr(1.1)]{}<\omit>{^<-2>\mu f} &
gfgf\ar[d]|{g\ve f} &
\ar[l]|{gf\eta}\ar[dl]^\id gf\xtwocell[dl(1.1)]{}<\omit>{^<2>g\nu}\\
& gf
}}\qquad\quad
\vcenter{\xymatrix@+4mm{
&fg\ar[dl]_\id\ar[dr]^\id\ar[d]|{f\eta g}
\xtwocell[dl(1.1)]{}<\omit>{^<-3>f\mu} \xtwocell[dr(1.1)]{}<\omit>{^<3>\nu g}\\
fg\ar[dr]_\ve & fgfg\ar[l]|{fg\ve}\ar[r]|{\ve
fg}\xtwocell[d]{}<\omit>{^\ve\ve} & fg\ar[dl]^\ve \\
& \id_B
}}
\]
of isomorphic 3-cells compose to identities in $\twocatx(A,A)$ and
$\twocatx(B,B)$, respectively. Note that strictly speaking, these
diagrams are not well typed, as e.g.\ the domain of $\mu f$ is not $\id_{gf}$,
but $\id_gf$, and horizontal composition is only pseudofunctorial. We omit the
constraint isomorphisms since they are easy to fill in, and the diagrams are
clearer and easier to memorize in this form.
\end{definition}
For reference, here are the axioms for biadjoints in string diagrammatic
notation:
\begin{equation}
\sdi{newax-2l}\;\;=\!\!\!\sdi{newax-2r}\quad\qtext{and}\quad\sdi{newax-1l}
\;\;=\!\!\!\sdi{newax-1r}\label{eq_spec_adj_ax}
\end{equation}
Observe that they are rotated and reflected relative to the pasting diagrams
to conform with our convention for the orientation of string diagrams.
Furthermore, as for the pasting diagram version there are some hidden
constraint isomorphisms, since e.g. the 3-cell $g\nu$ has type $g(\ve f\circ
f\eta)\to g\,\id_f$, but its environment in the diagram expects the type $g\ve
f\circ gf\eta\to\id_{gf}$.

We remark that Verity does not require the axioms in his definition of
biadjunction. He calls a biadjunction that additionally satisfies the axioms a
\emph{locally adjoint biadjunction}.

Since we want to use biadjunctions to characterize things, we attach great
value to the following lemma, which is a categorification of the fact that
adjoints are unique up to isomorphism.
\begin{lemma}
Let $\twocatx$ be a bicategory enriched category, and let
\[
(f\dashv g:B\to A,\eta,\ve,\mu,\nu)\qtext{and}(f'\dashv g:B\to
A,\eta',\ve',\mu',\nu')
\]
be two biadjunctions sharing the same right adjoint $g$. Then $f$ and $f'$ are
equivalent.
\end{lemma}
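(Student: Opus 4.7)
Following the classical 1-categorical argument for uniqueness of left adjoints, I propose to define the candidate equivalence 2-cells as ``mates'' of the other adjunction's unit:
\[
\phi := \ve f' \circ f\eta' : f \to f' \qquad\text{and}\qquad \psi := \ve' f \circ f'\eta : f' \to f.
\]
The goal is then to exhibit invertible 3-cells $\psi\phi \cong \id_f$ and $\phi\psi \cong \id_{f'}$, which together witness an equivalence between $f$ and $f'$ in the hom-bicategory $\twocatx(A,B)$.

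The heart of the argument is an auxiliary isomorphism $g\phi \circ \eta \cong \eta'$ (and, symmetrically, $g\psi \circ \eta' \cong \eta$). To build this, I rewrite $g\phi \circ \eta \cong g\ve f' \circ gf\eta' \circ \eta$ using pseudofunctoriality of whiskering by $g$. I then apply the exchange isomorphism for the horizontal composite of $\eta$ and $\eta'$ in $\twocatx(A,A)$ to replace $gf\eta' \circ \eta$ by $\eta gf' \circ \eta'$. The leftmost piece $g\ve f' \circ \eta gf'$ is the whiskering of $g\ve \circ \eta g$ by $f'$, which is isomorphic to $\id_{gf'}$ via the triangle axiom $\mu$ (propagated through pseudofunctoriality of $(-)f'$), leaving precisely $\eta'$.

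Given this lemma, I reduce $\psi\phi$ by two successive applications of the exchange isomorphism: first swap the central $f'\eta$ and $\ve f'$ to reach $\ve' f \circ \ve f'gf \circ fgf'\eta \circ f\eta'$, then swap $\ve' f$ and $\ve f'gf$ to reach $\ve f \circ fg\ve' f \circ fgf'\eta \circ f\eta'$. The middle portion $fg\ve' f \circ fgf'\eta$ is $fg(\ve' f \circ f'\eta) = fg\psi$, and $fg\psi \circ f\eta' \cong f(g\psi \circ \eta')$ by pseudofunctoriality of whiskering by $f$. The auxiliary lemma gives $g\psi \circ \eta' \cong \eta$, so this collapses to $\ve f \circ f\eta$, which is $\cong \id_f$ by $\nu$. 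Chaining these invertible 3-cells yields $\psi\phi \cong \id_f$; the argument for $\phi\psi \cong \id_{f'}$ is entirely symmetric, interchanging the roles of $\mu \leftrightarrow \mu'$ and $\nu \leftrightarrow \nu'$.

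The main technical obstacle is the coherent bookkeeping of constraint 3-cells: the pseudofunctoriality constraints of the various whiskerings, the exchange isomorphisms, and the hidden associators and unitors flagged in the definition of biadjunction. Since horizontal composition is only pseudofunctorial in each argument, every ``algebraic'' step is really a specific invertible 3-cell, and the final isomorphism is a pasting of all of these. The string diagram calculus introduced earlier makes the geometry more transparent: $\psi\phi$ appears as two ``S''-shaped cup-cap pairs sharing an $f'$ strand, and the braidings corresponding to the exchanges allow one snake to be absorbed into the other via the auxiliary lemma, reducing the whole composite to a single snake which is then straightened by $\nu$.
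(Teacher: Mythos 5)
Your proposal is correct and is essentially the paper's own proof: the paper takes exactly the same mate 2-cells $\ve f'\circ f\eta'$ and $\ve' f\circ f'\eta$ and exhibits the witnessing invertible 3-cells $\alpha$ and $\beta$ as string diagrams whose content is precisely your snake-absorption pasting of exchange isomorphisms with $\mu,\mu'$ and $\nu,\nu'$. Your auxiliary isomorphism $g\phi\circ\eta\cong\eta'$ is simply a symbolic reading of those diagrams, so the two arguments coincide.
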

\begin{proof}
The 2-cells between $f$ and $f'$ are given by $\ve f'\circ f\eta':f\to f'$ and
$\ve' f\circ f'\eta:f'\to f$. The fact that they are mutually inverse
equivalences is witnessed by the isomorphic 3-cells
$\alpha:\id_f\eiso\ve' f\circ f'\eta \circ \ve f'\circ f\eta'$
and 
$\beta:\ve f'\circ f\eta' \circ \ve' f\circ f'\eta\eiso\id_f'$
which are defined as
\[
\alpha=\!\!\!\!\!\!\sdi{mod-iso-0}\qquad\quad\beta=\!\!\!\!\!\!\sdi{mod-iso-1}
\]
The interested reader is invited to prove that this equivalence is even an
\emph{adjoint} equivalence.
\end{proof}

\subsection{Pre-equipments}\label{suse_preequs}

We will now introduce the bicategory enriched category $\spec$ of
pre-equipments and special functors and
have a closer look on its biadjunctions, which we call \emph{special}
biadjunctions. This concept is the goal of our
higher dimensional `digressions' --- we will later characterize the \tttc{} as
a special biadjunction between triposes and toposes.

A pre-equipment is almost the same as what Verity calls a \emph{weak proarrow
equipment} (almost, since he doesn't have the closedness condition under
vertical
isomorphisms and furthermore he considers bicategories, not 2-categories), but
the bicategory enriched categories that he considers are
bigger, because his notions of morphisms and transformations (he studies
several of them) are more general than the one we are interested in. We will
elaborate on this after giving our definitions.
\begin{definition}\label{def_preequipment}
\begin{enumerate}
\item\label{def_preequipment_preequipment}
A \emph{pre-equipment} is given by a $2$-category $\twocatc$ together with a
designated subcategory
$\twocatc_r$ of the $1$-cells which is closed under vertical isomorphisms.

We think of the $1$-cells in
$\twocatc_r$ as particularly `nice' arrows and we call them \emph{\regular{}
1-cells}. 

We call a pre-equipment \emph{\tworeg}%
\footnote{The `geometric' refers to \emph{geometric morphism}. We view
adjunctions in pre-equipments as geometric morphisms, and
Lemma~\ref{lem_functors_between_regular_dc} says that these are preserved by
special functors between geometric pre-equipments.}
if all left adjoints in it are \regular.
\item\label{def_preequipment_sfun}
A \emph{special functor} between
pre-equipments $\twocatc$ and $\twocatd$ is an oplax functor
$F:\twocatc\to\twocatd$ such that $Ff$ is a \regular{} 1-cell whenever $f$ is a
\regular{} 1-cell, all identity constraints $F \id_A\to \id_{FA}$ are
invertible,
and the composition constraints $F(gf)\to Fg\,Ff$ are invertible whenever $g$
is a \regular{} 1-cell.

\item\label{def_preequipment_strans}
A \emph{special transformation} between special functors $F,G$ is an
oplax (see Section~\ref{suse_conv}) 
transformation
$\eta:F\to G$ such that all $\eta_A$ are \regular{} 1-cells and the naturality
constraint
$\eta_B\, Ff\to Gf\,\eta_A$ is invertible whenever $f$ is a \regular{} 1-cell.

\end{enumerate}
\end{definition}
Every pre-equipment $\twocatc$ gives rise to a double category
$\widetilde{\twocatc}$ where the vertical 1-cells are the 1-cells of
$\twocatc$, the horizontal arrows are the 1-cells of $\twocatc_r$, and 
\[\qtext{2-cells}
\vcenter{\xymatrix@1@-5mm{A\ar[r]\ar[d] & B\ar[d]\xtwocell[dl]{}<\omit>{^}\\
C\ar[r] & D}} \qtext{in $\widetilde{\twocatc}$ are 2-cells}
\vcenter{\xymatrix@1@-5mm{A\ar[r]\ar[d]\xtwocell[dr]{}<\omit>{^} & B\ar[d]\\
C\ar[r] & D}} \qtext{in $\twocatc$.}\]
 In Verity's bicategory enriched categories
of equipments (see~\cite[Sections~1.4
and~1.5]{verity_enriched}), the 1-cells are certain double functors between
these induced double categories, which are `strong' in horizontal direction and
lax or oplax in vertical direction. Special functors in the sense of the
previous definition give rise to this kind of double functors, but not every
double functor comes from a special functor. This is because a 1-cell in
$\twocatc_r$ appears in $\widetilde{\twocatc}$ as a horizontal and a vertical
cell, but these two need not to be mapped to the same or isomorphic 1-cells by
a
double functor $F:\widetilde{\twocatc}\to\widetilde{\twocatd}$ in the sense of
Verity.

Now, we want to prove that pre-equipments, special functors, special
transformations and modifications form a \bec. If we
wanted to minimize our effort in doing so, we could just prove that the
morphisms and transformations that we consider are special cases of Verity's
\emph{comorphisms} and transformations, and are closed under composition. 

However, to present a more closed flow of ideas, we prefer to describe the
steps which are necessary to establish directly that the given definitions
give rise to a bicategory enriched category. Since the proofs are
for the most part straightforward once you know what to do, we do not prove
every little detail, but only remark on subtleties and important points.

It is well known that oplax functors, oplax transformations, and modifications
between 2-categories $\twocata, \twocatb$ form a 2-category
$\oplax(\twocata,\twocatb)$ (see e.g.\ \cite[Section~2.0]{leinster1998basic}).
Moreover, it is easy to verify that special transformations are closed under
composition, and thus for pre-equipments $\twocatc, \twocatd$, there is a
locally full sub-2-category $\preq(\twocatc,\twocatd)$ of
$\oplax(\twocatc,\twocatd)$ which consists of special functors, special
transformations and modifications.
For pre-equipments $\twocatc,\twocatd,\twocate$, we have to define composition
2-homomorphisms
\[
\comp_{\twocatc,\twocatd,\twocate}:\preq(\twocatc,\twocatd),\preq(\twocatd,
\twocate)\longrightarrow\preq(\twocatc,\twocate).
\]
For special functors $F,G$, $\comp(F,G)$ is just the composition of
oplax functors
(which is again special as is easily seen),
and the definition of the pseudofunctors $\comp(F,-)$ is also straightforward.
Postcomposition $\comp(-,G)$ is more interesting. Crucial here is the
observation that in the world of pseudofunctors and pseudo-transformations,
every $G\in\pseudo(\twocatd,\twocate)$ induces a pseudofunctor
\[
\comp(-,G):\pseudo(\twocatc,\twocatd)\to\pseudo(\twocatc,\twocate),
\]
but this does \emph{not} generalize to oplax functors and transformations. The
reason is that for a pseudo-transformation $\eta:F\to F':\twocatc\to\twocatd$
and $f:C\to C'$, the constraint 2-cell $G\eta_f$ is defined by the pasting
diagram
\begin{equation}\label{eq_noncomposable}
\vcenter{\xymatrix@!@+1.1pc{
GFC
	\ar[r]^{GFf}
	\ar@/^1.2pc/[rd]|<<<<<{G(\eta_{C'}\circ Ff)}^{}="2"
	\ar@/_1.2pc/[rd]|>>>>>{G(F'\!\!f \circ \eta_{C'})}^{}="1"
	\ar[d]_{G\eta_C} &
GFC'
	\ar[d]^{G\eta_{C'}}
\\
GF'C
	\ar[r]_{GF'f} &
GF'C'
\ar@{<=}"2,1"*+\frm{};"1"*+\frm{}^>>>{}
\ar@{<=}"1"*+\frm{};"2"*+\frm{}^{}
\ar@{=>}"2"*+\frm{};"1,2"*+\frm{}_<<<{\!\!\cong}
}},
\end{equation}
but this only makes sense if the upper right composition constraint is
invertible. Now this is not the case in general for oplax functors and
transformations, but it \emph{is} whenever $G$ and $\eta$ are special since
then
$\eta_{C'}$ is regular, and this implies the invertibility of the composition
constraint by the definition of special functor. Postcomposition $G\alpha$ of
modifications with functors is easy again, and in the end, $\comp(-,G)$ is a
pseudofunctor for the same reasons as it is in the pseudo case.

To make $\comp(-,-)$ into a 2-homomorphism, we still have to define
modifications $\theta\eta:G'\eta\circ\theta F\to\theta F'\circ G\eta$ for
special transformations $\eta:F\to F'$ and $\theta:G\to G'$ and check that they
have the desired properties. There is only one way to do define $\theta\eta$ ---
given $C\in\twocatc$, we define the component of $\theta\eta$ at $C$ as the
constraint cell of $\theta$ at $\eta_C$, i.e.\ $(\theta\eta)_C =
\theta_{\eta_C}$. It follows from the fact that $\eta$ and $\theta$ are special
that this is an isomorphism, and we leave the remaining verification that the
such defined 2-cells give rise to pseudo-natural transformations
\[
 \comp(\eta,-):\comp(F,-)\to\comp(F',-)
\]
and
\[\comp(-,\theta):\comp(-,
G)\to\comp(-,G')
\]
to the reader.

The identities of our bicategory enriched category are just given by
identity-2-functors $\id_\twocatc\in\preq(\twocatc,\twocatc)$, and the
verifications of associativity and identity axioms do not bear any surprises
either. We are thus able to state:
\begin{lemma}
Pre-equipments, special functors, special transformations and modifications
together form a bicategory enriched category $\preq$.
\qed
\end{lemma}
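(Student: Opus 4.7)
The plan is to take the construction sketched in the preceding discussion and turn it into a checklist of the data and axioms of a bicategory enriched category. Concretely, I need to exhibit: (i) for each pair $\twocatc,\twocatd$ of pre-equipments a bicategory $\preq(\twocatc,\twocatd)$; (ii) identity $0$-homomorphisms $\id_\twocatc\in\preq(\twocatc,\twocatc)$; (iii) composition $2$-homomorphisms $\comp_{\twocatc,\twocatd,\twocate}$; and (iv) strict associativity and identity axioms.

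For (i), I start from the known 2-category $\oplax(\twocatc,\twocatd)$ of oplax functors, oplax transformations, and modifications. The essential verification is that special functors and special transformations are closed under vertical composition; for transformations $\eta:F\to F'$ and $\eta':F'\to F''$, the naturality constraint of the composite is obtained by pasting those of $\eta$ and $\eta'$, so it is invertible at regular $f$ because both factors are, and the components $(\eta'\eta)_A = \eta'_A\,\eta_A$ lie in the regular subcategory by its closure under composition. Item (ii) is trivial, since an identity $2$-functor is special by inspection.

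The bulk of the work is (iii). For fixed $F:\twocatc\to\twocatd$, the pseudofunctor $\comp(F,-):\preq(\twocatd,\twocate)\to\preq(\twocatc,\twocate)$ is defined as in the pseudo case, and no subtlety arises because we only precompose. The delicate piece is $\comp(-,G)$ for fixed $G:\twocatd\to\twocate$: as diagram~\eqref{eq_noncomposable} shows, defining $G\eta$ at a 1-cell $f$ requires invertibility of a composition constraint of the form $G(F'f\;\eta_{C'})\to GF'f\;G\eta_{C'}$, which holds precisely because $G$ is special and $\eta_{C'}$ is regular. Granting this, I would check that $\comp(-,G)$ preserves composition and identities up to coherent isomorphism, then define the exchange datum $\theta\eta$ for special $\eta:F\to F'$ and $\theta:G\to G'$ pointwise by $(\theta\eta)_C = \theta_{\eta_C}$, which is an isomorphism by specialness of $\theta$ at the regular 1-cell $\eta_C$, and finally verify the three axioms of a $2$-homomorphism: pseudo-naturality of $\comp(\eta,-):\comp(F,-)\to\comp(F',-)$, pseudo-naturality of $\comp(-,\theta):\comp(-,G)\to\comp(-,G')$, and the cubical coherence for three composable cells.

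I expect the main obstacle to be the last of these, the $3$-cocycle for the exchange cells. Its verification reduces to a pasting identity among the constraint data of $G,G',\theta,\eta$ which is forced by the oplax-naturality of $\theta$ applied to the $\eta$-component at each object, but writing it out by hand is tedious; the cleanest approach is to observe that both sides agree on objects (as $\theta_{\eta_C}$ is unambiguously defined) and then appeal to coherence for pseudofunctors and pseudo-transformations to conclude equality of the pasting composites. The strict associativity and identity axioms for $\comp$ then follow because composition of oplax functors and oplax transformations is strictly associative and unital at the underlying level, and the exchange $2$-cells, being determined pointwise from the constraint data, inherit this strictness. Assembling these pieces yields the bicategory enriched category $\preq$ as claimed.
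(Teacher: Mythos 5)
Your proposal follows essentially the same route as the paper's own (sketched) proof: building $\preq(\twocatc,\twocatd)$ as a locally full sub-2-category of $\oplax(\twocatc,\twocatd)$, locating the only real subtlety in the postcomposition pseudofunctor $\comp(-,G)$ via the invertible constraint of diagram~\eqref{eq_noncomposable}, and defining the exchange cells pointwise by $(\theta\eta)_C=\theta_{\eta_C}$. The one cosmetic difference is that you invoke coherence for the cubical axiom where a direct componentwise check (modifications being determined by their components) suffices, but this does not affect correctness.
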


As a first example, we define the pre-equipment of toposes.  In
Sections~\ref{sec_triposes} and~\ref{sec_q_toposes}, we will furthermore
introduce the pre-equipments $\cattrip$ and $\catqtop$ of triposes and
q-toposes, respectively.

\begin{example}\label{ex_pre-eq_toposes}
The pre-equipment $\cattop$ of toposes has the 2-category of toposes, finite
limit
preserving functors and arbitrary natural transformations as underlying
2-category, and regular (i.e.\ epi preserving) functors as \regular{} 1-cells.
$\cattop$ is a \tworeg{} pre-equipment, since epimorphisms are preserved by
left
adjoints.
\end{example}

We call a biadjunction in $\spec$ a \emph{special biadjunction} (we do not use
the term biadjunction of pre-equipments since this expression should be
reserved for the more general double categorical notion). Special
biadjunctions enjoy the following interesting property.
\begin{lemma}\label{lem_right_strong}
Let $\funf\adj \funu :\twocatd\to\twocatc$ be a special biadjunction.
Then $\funu$ is a strong functor.
\qed
\end{lemma}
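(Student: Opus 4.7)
The identity constraints of $\funu$ are already invertible by the definition of a special functor, so the substantive content of the lemma is that the composition constraints $u_{f,g}\colon\funu(fg)\to\funu f\cdot\funu g$ are invertible for arbitrary composable $g\colon D\to D'$ and $f\colon D'\to D''$ in $\twocatd$, and not only in the case where $f$ is regular, which is what the special-functor definition directly delivers.

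My plan is to argue by representability. Because $\mu$ and $\nu$ are invertible $3$-cells, the biadjunction witnesses a genuine equivalence (not merely an adjunction) of hom-categories
\[
\twocatc(C,\funu D)\simeq\twocatd(\funf C,D),\qquad \phi\longmapsto\epsilon_D\cdot\funf\phi,
\]
with pseudo-inverse $\chi\mapsto\funu\chi\cdot\eta_C$, pseudonatural in $C$. Equivalences of categories are in particular fully faithful, and therefore reflect isomorphisms on $2$-cells, so it is enough to see that the image $\epsilon_{D''}\cdot\funf u_{f,g}$ of the composition constraint in $\twocatd(\funf\funu D,D'')$ is invertible. I would then use the oplax-transformation coherence axiom for $\epsilon$ at the composable pair $(f,g)$, which expresses the naturality $2$-cell $\epsilon_{fg}$ as the pasting of $\epsilon\cdot\funf u_{f,g}$ with the oplax constraint of $\funf$ at $(\funu f,\funu g)$ and the individual naturality cells $\epsilon_f$ and $\epsilon_g$. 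Transporting this pasting back across the equivalence using $\mu$ and $\nu$, and repeatedly invoking the special-functor constraints (which are invertible at the regular inputs $\eta_\bullet$, $\funu\epsilon_\bullet$, $\epsilon_\bullet$ arising from the fact that $\eta$ and $\epsilon$ are special transformations with regular components), one exhibits $\epsilon\cdot\funf u_{f,g}$ as a composite of invertible cells, hence itself invertible.

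The main obstacle is the bookkeeping of coherence data: the raw pasting just described contains several cells — the oplax constraint of $\funf$ at the generally non-regular pair $(\funu f,\funu g)$, the oplax naturality of $\epsilon$ at the non-regular $f,g$, and the oplax constraint of $\funu$ along the way — that are not individually invertible in $\twocatd$. The work is to reorganize the pasting, using the invertibility of $\mu$ and $\nu$ to mate these cells back and forth across the adjunction, until every surviving constraint appears at a regular input (where it is invertible by the special-functor hypothesis), with the remaining non-invertible cells cancelling in paired ``reversible'' patterns forced by the triangle $3$-cells. String-diagrammatic notation, as set up in Section~\ref{suse_conv}, is the natural vehicle for this verification, since the braidings encoding the exchange $3$-cells make the cancellations visible.
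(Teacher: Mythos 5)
The paper does not actually prove this lemma --- it states it with a \qed and defers to Verity's thesis and to Kelly's doctrinal adjunction --- so your proposal must be judged on its own terms, and it breaks at its very first step. The invertibility of $\mu$ and $\nu$, together with the two biadjunction axioms, does \emph{not} make $\phi\mapsto\ve_D\cdot\funf\phi$ and $\chi\mapsto\funu\chi\cdot\eta_C$ into a pseudo-inverse pair of equivalences: it makes them an \emph{adjunction} between the hom-categories $\twocatc(C,\funu D)$ and $\twocatd(\funf C,D)$, and no more. This is precisely why Verity calls these data a \emph{locally adjoint biadjunction}, a phrase the paper quotes immediately after the definition. To see the obstruction concretely, write out the unit of this hom-adjunction at $\phi:C\to\funu D$: it is the pasting of the component $\mu_D$, the (invertible, since $\ve_D$ is regular) composition constraint of $\funu$ at the pair $(\ve_D,\funf\phi)$, and the oplax naturality cell $\eta_\phi:\eta_{\funu D}\cdot\phi\to\funu\funf\phi\cdot\eta_C$ --- and this last cell is not invertible for general $\phi$, since $\eta$ is only a special (oplax) transformation; the paper's Section 5.3 exhibits exactly such non-invertible unit constraints. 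A mere adjunction between hom-categories is not fully faithful and does not reflect isomorphisms, so your conservativity step --- deducing invertibility of $u_{f,g}$ from invertibility of $\ve_{D''}\cdot\funf u_{f,g}$ --- has no support; and the hoped-for ``cancellation in paired reversible patterns'' is not a mechanism, because the cells that would have to cancel ($\eta$'s and $\ve$'s naturality at non-regular $1$-cells, $\funf$'s constraint at the non-regular pair $(\funu f,\funu g)$) are genuinely one-directional.

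The proof indicated by the paper's citations goes the opposite way: one writes down an explicit two-sided inverse of $u_{f,g}$, the doctrinal mate in the style of Kelly, namely the composite
\[
\funu f\cdot\funu g
\;\Rightarrow\;\funu\ve_{D''}\cdot\eta_{\funu D''}\cdot\funu f\cdot\funu g
\;\Rightarrow\;\funu\ve_{D''}\cdot\funu\funf(\funu f\cdot\funu g)\cdot\eta_{\funu D}
\;\cong\;\funu\bigl(\ve_{D''}\cdot\funf(\funu f\cdot\funu g)\bigr)\cdot\eta_{\funu D}
\;\Rightarrow\;\funu(fg\cdot\ve_D)\cdot\eta_{\funu D}
\;\Rightarrow\;\funu(fg)\cdot\funu\ve_D\cdot\eta_{\funu D}
\;\cong\;\funu(fg),
\]
where the steps are: $\mu_{D''}$; the naturality cell of $\eta$ at $\funu f\cdot\funu g$; the \emph{inverse} of $\funu$'s constraint at the $\ve$-headed pair (the only place invertibility of a constraint is invoked, and it is available because $\ve$ is special, hence has regular components); $\funu$ applied to the pasting of $\funf$'s constraint at $(\funu f,\funu g)$ with the naturality cells $\ve_f$ and $\ve_g$; $\funu$'s constraint at $(fg,\ve_D)$ used in its given (forward) direction; and $\mu_D^{-1}$. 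Every non-invertible cell is used only in the direction in which it exists, and the two inverse identities are then verified from the oplax-transformation coherences for $\eta$ and $\ve$, the modification axioms, and the two biadjunction axioms (here $\nu$ enters). If you replace your representability reduction by this direct construction, your string-diagrammatic bookkeeping instincts from the second paragraph do become a proof.
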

A proof of a more general lemma (with double functors instead of special
functors) appears in~\cite{verity_enriched}. %
Similar results are also known for monoidal categories and double categories,
the first systematic treatment of this phenomenon (for lax morphisms of
pseudo-algebras) is \cite[Theorem~1.5]{kelly1974doctrinal}. We do not prove the
lemma here, because we don't really need it --- for the biadjunctions that we
consider we know that the right adjoints are pseudofunctors anyway.
But we chose to mention the statement, since it somehow fits into the
picture: The right adjoints in our case are forgetful functors --- there is no
reason for them to have non-invertible composition constraints. The left
adjoints, however, are free constructions which naturally have more `degrees of
freedom', which in a sense justifies them being oplax.

\medskip

Finally, some remarks about \emph{geometric} pre-equipments. Wood's
\emph{proarrow equipments} have the property that all 1-cells of the
designated subcategory have right adjoints, and this is the important property
for abstract category theory, for which proarrow equipments were introduced,
since it allows to give an abstract treatment of phenomena related to
contravariance.

In the pre-equipments that we consider, the reverse inclusion holds,
i.e.\ every left adjoint is in the designated subcategory (for example inverse
image parts of geometric morphisms are regular), and we call a pre-equipment
having this property \emph{geometric}. The following lemma is an easy
observation about special functors between geometric pre-equipments.

\begin{lemma}\label{lem_functors_between_regular_dc}
Let $\twocatc$ be a geometric pre-equipment, and let $F:\twocatc\to\twocatd$
be
a special functor. If $(f\dashv u:B\to A,\eta,\ve)$ is an adjunction in
$\twocatc$, then $(Ff\dashv Fu:FB\to FA,\phi_{u,f}\;
F\!\eta\;\phi_{\id_A}^{-1},\phi_{\id_B}\;F\ve\;\phi_{f,u}^{-1})$ is an
adjunction in $\twocatd$ (By $\phi$ we denote the identity and composition
constraints of $F$).

If $f\dashv u$ is a reflection (i.e., has isomorphic counit), then so is
$Ff\dashv Fu$.
\qed
\end{lemma}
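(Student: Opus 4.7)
The plan is to verify the two triangle identities for $(Ff\dashv Fu, \tilde\eta, \tilde\ve)$ directly, using the oplax coherence of $F$. First, since $\twocatc$ is geometric, the left adjoint $f$ is regular, and since $F$ is special this makes $\phi_{f,u}$ invertible; thus $\tilde\ve = \phi_{\id_B}\circ F\ve\circ\phi_{f,u}^{-1}$ is well-defined. Analogously $\tilde\eta = \phi_{u,f}\circ F\eta\circ\phi_{\id_A}^{-1}$ is well-defined since $\phi_{\id_A}$ is invertible by specialness.

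For the first triangle identity $(\tilde\ve\,Ff)\circ(Ff\,\tilde\eta) = \id_{Ff}$, I would expand the whiskerings and apply the pentagon coherence axiom of $F$ at the triple $(f,u,f)$ to rewrite $(\phi_{f,u}^{-1}\,Ff)\circ(Ff\,\phi_{u,f})$ as $\phi_{fu,f}\circ\phi_{f,uf}^{-1}$; this step uses invertibility of $\phi_{f,uf}$, which again follows from regularity of $f$. Two applications of the naturality of $\phi$ (at $\ve$ and at $\eta$) then convert the surviving whiskerings of $F\ve, F\eta$ into applications of $F(\ve f), F(f\eta)$, whose vertical composite collapses to $F\id_f = \id_{Ff}$ by the triangle identity in the source. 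The remaining identity constraints cancel via the oplax unit axioms $(\phi_{\id_B}\,Ff)\circ\phi_{\id_B,f} = \id_{Ff}$ and $(Ff\,\phi_{\id_A})\circ\phi_{f,\id_A} = \id_{Ff}$.

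The second triangle identity is treated dually, applying the pentagon at $(u,f,u)$. The subtle point here is that $\phi_{uf,u}$ has no reason to be invertible --- $uf$ need not be regular --- so one must use the pentagon only in the direction that does not require inverting it. This works out: naturality of $\phi_{-,u}$ at $\eta$ \emph{introduces} $\phi_{uf,u}$ as a factor, pentagon then rewrites $(\phi_{u,f}\,Fu)\circ\phi_{uf,u}$ as $(Fu\,\phi_{f,u})\circ\phi_{u,fu}$, the pair $(Fu\,\phi_{f,u}^{-1})\circ(Fu\,\phi_{f,u})$ cancels, naturality of $\phi_{u,-}$ at $\ve$ brings $F(u\ve)$ into the picture, and the triangle identity $u\ve\circ\eta u = \id_u$ in $\twocatc$ finishes the reduction modulo unit constraints, which are then cancelled by the oplax unit axioms.

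The reflection clause is immediate: if $\ve$ is an isomorphism then so is $F\ve$, while $\phi_{\id_B}$ and $\phi_{f,u}$ are invertible by the observations above, hence $\tilde\ve$ is a composite of three isomorphisms. The main obstacle throughout is bookkeeping: many coherence constraints need to be tracked, and one must be careful to invert only those $\phi_{g,h}$ with $g$ regular and those $\phi_{\id_X}$. String-diagrammatic notation makes the two pentagons and the relevant naturality squares visually transparent and is what I would use when writing out the argument in full detail.
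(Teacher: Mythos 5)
Your proof is correct and is exactly the direct verification that the paper leaves to the reader (the lemma is stated with its proof omitted as an ``easy observation''): in particular, you invert only the constraints whose outer factor is regular --- $\phi_{f,u}$, $\phi_{f,uf}$, $\phi_{f,\id_A}$, legitimate since the left adjoint $f$ is regular in a geometric pre-equipment and $F$ is special --- and you correctly handle the genuinely non-invertible $\phi_{uf,u}$ and $\phi_{u,fu}$ in the second triangle identity by introducing them via naturality and using the associativity coherence only in the oplax direction, which is the one real subtlety here. A terminological nit only: the coherence axiom of an oplax functor at a composable triple is the associativity axiom (a commuting square, sometimes drawn as a hexagon), not a pentagon, but your use of it is exactly right.
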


\subsection{Equipments and related concepts in the literature}

Proarrow equipments were first introduced by Wood in
\cite{wood_proarrows_i} as a framework for abstract category theory. 

Lax functors between double categories were considered, besides by Verity, in
the work
of Grandis and Par\'e~\cite{grandis1999limits}. Their definitions are a bit
different since they use pseudo-double categories instead of double
bicategories, as Verity does. Shulman gives yet another --- a bit more
restrictive and therefore easier and shorter --- variant of the definitions in
his work on framed bicategories~\cite{shulman2008framed}. Shulman's
bicategories are `pseudocategories' in $\catcat$ (just like monoidal
categories are pseudomonoids in $\catcat$), thus they have strict composition
vertically and bicategorical composition horizontally. Furthermore, his
lax double functors commute with vertical composition on the nose.

The notion of special transformation appeared (without name) already in 1993 in
Johnstone's~\cite[Lemma~1.1]{johnstone93} where it is used to define a less
general version of what we call special biadjunction, under
the name
\emph{semi-oplax adjunction}. This article is notable since it
was a starting point for the present work.

We adopted the adjective \emph{special} from~\cite{dms03}, where Day,
McCrudden and Street define special
functors for those pre-equipments in which the regular arrows
coincide with the left adjoints.

\subsubsection*{Vertical and horizontal}

When associating a double category to a pre-equipment, we have a choice to
make, namely whether we want to view the \regular{} 1-cells as horizontal or
vertical 1-cells in the double category. In the context of general
double-categories, this correpsonds to the question whether we want to view
lax double functors as strong on the horizontal or on the vertical 2-category.

The first convention, where regular 1-cells are horizontal and lax double
functors are horizontally strong, is used by Grandis, Par\'e and
Verity, whereas for Shulman, regular 1-cells are horizontal, and lax double
functors are vertically strong (and even strict).

We do not use double
categories explicitly, but the fact
that we usually draw the components of natural transformations vertically in
naturality squares corresponds the convention used by Shulman (since
components of special transformations are regular).

\section{Triposes}\label{sec_triposes}

In one sentence, triposes are fibrational models of non-extensional
intuitionistic higher order logic. For a general introduction to
fibrations in categorical logic and their internal language, we refer to
Jacobs' book~\cite{jacobs2001categorical}. We will now give
the definition of tripos; what it means for a tripos to be a model of higher
order logic will be explained in Section~\ref{suse:ilatri}.
\begin{definition}
A \emph{Heyting algebra} is a partial order that is bicartesian closed as a
category. More explicitly, it is a poset with finite meets, finite joins
and an operation $(-
\imp -)$ universally characterized by
\[
\varphi\wedge\psi\leq\gamma\quad\text{iff}\quad \varphi\leq\psi\imp\gamma.
\]
\end{definition}

\begin{definition}\label{def_tripos}
Let $\flc$ be a category with finite products. A \emph{tripos over $\flc$} is a
fibration
\[
\trip:\flx\to\flc
\]
such that
\begin{enumerate}
 \item All fibers of $\trip$ are Heyting algebras.
 \item Reindexing along morphisms in $\flc$ preserves all structure of Heyting
algebras.
\item For every
$f:A\to B$ in $\flc$, the reindexing map $f^*:\trip_B\to\trip_A$
has left and right adjoints
\[ \exists_f\dashv f^*\dashv\forall_f,\]
such that for every pair $f:A\to B,\quad g:X\to Y$ of morphisms in $\flc$ and
all $\varphi\in\trip_{B\times X}$, we have  
\begin{equation}\label{eq_bc}
Q_{A\times g}\left((f\times X)^*\varphi\right) = (f\times Y)^*(Q_{B\times
g}\varphi),
\end{equation}
where $Q$ is either $\forall$ or $\exists$.
\item\label{def_tripos_chi} $\trip$ has \emph{weak power objects}, i.e., for
every
$A\in\objects{\flc}$ there is  an object $\tripower A\in\flc$ and a predicate
$(\ni_A)\; \in\trip_{\tripower A\times A}$ such that for all predicates
$\varphi\in\trip_{C\times A}$ we are given a map\footnote{This map is not
supposed to be uniquely determined by the stated property, but we assume that
the tripos is equipped with a choice of such maps.}
$\chi_A(\varphi):C\to\tripower A$ such that $\varphi =(\chi_A(\varphi)\times
A)^*(\ni_A)$, which is written diagramatically%
\footnote{It has to be
explained how to read diagram~\eqref{eq_dia_tripos}. Here, we are
using a notation that is very common for fibrations --- by drawing one object
above another, e.g.\ $\varphi$ over $C\times A$, we assert that $\varphi$ is
in the fiber over $C\times A$, i.e.\ $\trip(\varphi) = C \times A$, and in
the
same way for morphisms. We use wavy arrows $\xymatrix{\ar@{~>}[r]&}$ to
denote cartesian morphisms. Thus, the diagram says that
$\varphi$ is the cartesian lifting of $\ni_A$ along $\chi_A(\varphi)\times
A$.
}
as
\begin{equation}\label{eq_dia_tripos}
 \vcenter{\xymatrix@C+2pc{
\varphi \ar@{~>}[r] & \ni_A \\
C\times A \ar[r]_<<<<<<<<{\chi_A(\varphi)\times A} & \tripower(A) \times A
}}.
\end{equation}
\end{enumerate}
\end{definition}

\begin{remark}
The third clause of Definition~\ref{def_tripos} requires some clarifications.
Condition~\eqref{eq_bc} is the \emph{Beck-Chevalley condition}, we will
abbreviate it by (BC). It
is usually stated in the form
\begin{quotation}
\noindent
``For every pullback square
$\vcenter{\xymatrix@-17pt{P \ar[r]_q\ar[d]_p& B\ar[d]^g \\ A\ar[r]^f & C}}$
in
$\flc$ and all $\varphi\in \trip_A$, we have $Q_q(p^*\varphi)\cong
g^*(Q_f(\varphi))$ where $Q$ is either of $\forall, \exists$.''
\end{quotation}
This definition is not appropriate in our setting, since we
only assume $\flc$ to have products (not arbitary finite limits), and in
Section~\ref{suse:trimorphs} we consider functors between the bases of
triposes which only preserve finite products. The deep reason why we have to
abandon general finite limits in favor of finite products will become apparent
in Section~\ref{sec_f_s_unit} --- the functors $D_\trip$ defined there only
preserve products.

It has long been observed that the full strength of the classical
Beck-Chevalley condition is not necessary to ensure soundness of the
interpration of logical systems. Jacobs~\cite{jacobs2001categorical} gives
definitions using the pullback squares
\begin{equation}\label{eq_bc_jacobs}
\vcenter{\xymatrix{
A\times X\ar[r]_-{A\times !}\ar[d]_{f\times X} & A\ar[d]^f\\
B\times X \ar[r]^-{B\times !} & B
}}
\qtext{and}
\vcenter{\xymatrix{A\times
X\ar[r]_-{A\times \delta}\ar[d]_{f\times X} & A \times
X\times X\ar[d]^{f\times X\times X}\\
B\times X \ar[r]^-{B\times \delta} & B\times X\times X}}
\end{equation}
which are definable from finite product structure \cite[Definitions~1.9.1 and
3.4.1]{jacobs2001categorical}.
\eqref{eq_bc} corresponds to the classical condition restricted to squares of
the form
\begin{equation}\label{eq_bc_square}
\vcenter{\xymatrix{A\times X\ar[r]_{A\times g}\ar[d]_{f\times X} & A \times
Y\ar[d]^{f\times Y}\\
B\times X \ar[r]^{B\times g} & B\times Y}}.
\end{equation}
The class of squares of this form encompasses all squares used by Jacobs, in a
slightly more concise way. Moreover, it expresses precisely the desired
property, namely the commutation of substitution and (generalized)
quantification.

As a side note, Jacobs' version of (BC) and \eqref{eq_bc} are equivalent,
i.e., any fibration of Heyting algebras with $\exists$ satisfying (BC) for
squares of the forms~\eqref{eq_bc_jacobs} already satisfies (BC) for all
squares of the form~\eqref{eq_bc_square}. This can be seen by analyzing the
proofs of the substitution lemma~\ref{lem_subs_trip} and the soundness
theorem~\ref{theo_sound_trip} below. They only require (BC) for the
squares~\eqref{eq_bc_jacobs} as hypothesis, but using the internal logic we
can prove (BC) for all squares of the form~\eqref{eq_bc_square}. In fact, in
our setting even Jacobs' set of squares is redundant --- 
the condition for the
right square in~\eqref{eq_bc_jacobs} can be derived using the equivalence
$(\exists_\delta\varphi)(x,y)\dashv\vdash\varphi(x)\wedge x=y$.
This equivalence is a consequence of the Frobenius law, which in turn follows
from the existence of implication.

An advantage of the phrasing~\eqref{eq_bc} of (BC) is that it does not rely on
projections and diagonals, and thus is still meaningful in a monoidal setting.
Indeed, Shulman proves the monoidal version of the condition for certain
monoidal fibrations in~\cite[Corollary~16.4]{shulman2008framed}.
\end{remark}

\subsection{Interpreting higher order logic in
triposes}\label{suse_hol_in_triposes}\label{suse:ilatri}

In this section, we explain how to interpret languages of higher order logic in
triposes. This provides the basis for the \emph{internal language} of a tripos,
to be presented in the next section. Jacobs'
book~\cite{jacobs2001categorical} gives a careful exposition of how to
interpret different systems of predicate logic in fibrations, but for reasons
of self-containedness, and because the internal language of a tripos will be a
central tool in the following, we give a detailed and explicit description of
the system that we use, how it can be interpreted in a tripos. Then, in the
next
section, we explain how the internal language --- which is the language that
we get for the `maximal' choice of signature --- can be used to reason and
calculate in a tripos.

\begin{definition}
A \emph{signature} for a language of many sorted higher logic is given by a
triple $\Sigma=(\sigs,\sigf,\sigr)$ where 
\begin{itemize}
 \item $\sigs$ is a set of \emph{base types},
\item $\hotsig$ is the set of \emph{higher order types} generated by $\sigs$,
that is the smallest set that contains all elements of $\sigs$ and is closed
under the inductive clauses%
\begin{itemize}
\item
$1\in\hotsig$
\item
$A,B\in\hotsig\quad\imp\quad A\times B\in\hotsig$
\item
$A\in\hotsig\quad\imp\quad \ptype(A)\in\hotsig$,
\end{itemize}
\item $\sigf=(\sigf_{\Delta,A}\;;\;\Delta\in\hotsig^*,A\in\hotsig)$ is a family
of sets of \emph{function symbols}
($\hotsig^*$ is the set of lists of higher order types), where for
$(A_1,\dots,A_n)\equiv \Delta\in\hotsig^*$ and $A\in\hotsig$ we view an
$f\in\sigf_{\Delta,A}$ as a function of type $f:A_1\times\dots\times A_n\to A$.
\item
$\sigr=(\sigr_{\Delta}\;;\;\Delta\in\hotsig^*)$ is a family of sets of
\emph{relation symbols}, where for $\Delta\in\hotsig^*$, we view
$R\in\sigr_\Delta$ as a relation of arity $\Delta$.
\end{itemize}
\end{definition}
From a signature $\Sigma$, we can inductively generate \emph{terms} and
\emph{formulas}. To be able to define the semantics later, we have to keep
track of free variables explicitly, using \emph{contexts}. A context is a list
$\Delta\equiv x_1:A_1,\dots,x_n:A_n$ of variable declarations, where
$A_i\in\hotsig$. We will write terms and formulas in context as $(\Delta\csep
t\ttp B)$ and $(\Delta\csep \varphi)$ (the symbol $\vdash$ is reserved for the
entailment relation between formulas).
Table~\ref{table_hol}  gives the inductive clauses for terms and formulas.
\begin{table}
\noindent\framebox[\textwidth]{\parbox{.96\textwidth}{
\medskip
\textbf{Terms:}
\begin{center}
\begin{tabular}{cc}
\unary{\phantom{|}}{}{\Delta\csep x_i:A_i} &
\unary{\Delta\csep t_i:B_i\quad (1\leq i \leq
n)}{\scriptsize$f\in\sigf_{(B_1,\dots,B_n),C}$}{\Delta\csep
f(t_1,\dots,t_n):C}\\[5mm]
\unary{\Delta\csep t:B_1\times B_2}{$i=1,2$}{\Delta\csep \pi_i(t):B_i} &
\end{tabular}
\end{center}
\textbf{Formulas:}
\begin{center}
\begin{tabular}{c@{$\quad$}c@{$\quad$}c}
\unary{\phantom{\csep}}{}{\Delta\csep\top} &
\unary{\phantom{\csep}}{}{\Delta\csep\bot} & 
\binary{\Delta\csep\varphi}{\Delta\csep\psi}{}{\Delta\csep\varphi\wedge\psi}
\\[.5cm]
\binary{\Delta\csep\varphi}{\Delta\csep\psi}{}{\Delta\csep\varphi\vee\psi} &
\binary{\Delta\csep\varphi}{\Delta\csep\psi}{}{\Delta\csep\varphi\imp\psi} & 
\binary{\Delta\csep s: B}{\Delta\csep t: B}{}{\Delta\csep s = t} \\[.5cm]
\unary{\Delta,y\vtp B\csep \varphi[y]}{}{\Delta\csep \exists y\vtp B\qdot
\varphi[y]}  &
\unary{\Delta,y\vtp B\csep \varphi[y]}{}{\Delta\csep \forall y\vtp B\qdot
\varphi[y]}&
\binary{\Delta\csep s:B}{\Delta\csep t:\ptype B}{}{\Delta\csep s\in t}
\end{tabular}
\[
\unary{\Delta\csep t_i:B_i\quad (i=1,\dots,n)}{
\vbox{\scriptsize\hbox{$R\in\sigr$}\scriptsize\hbox{$a_\sigr(R)=(B_1,\dots,
B_n)$}}}{\Delta\csep R(t_1,\dots,t_n)}
\]

\end{center}
$\Delta\equiv x_1\vtp A_1,\ldots ,x_n\vtp A_n$ denotes a context of typed
variables.
}}
\caption{Terms and formulas in context over a signature
$\Sigma=(\sigs,\sigf,\sigr)$}
\label{table_hol}
\end{table}
For reasons of conciseness and better readability, we will often supress
contexts from the notation for terms and formulas, and types from the notation
for formulas.
We call the collections of terms and of formulas generated from $\Sigma$
together \emph{the language generated by $\Sigma$}, and denote it by
$\holt{\Sigma}$.

\begin{definition}[Interpretation]
Given a signature $\Sigma=(\sigs,\sigf,\sigr)$ and a tripos
$\trip:\flx\to\flc$, we may define an \emph{interpretation} of $\holt{\Sigma}$
in
$\trip$. This works as follows.
\begin{itemize}
\item To each base type symbol $S\in\sigs$, we associate an object
$\sem{S}\in\flc$ in the base of the tripos.
\item We inductively extend this assignment to higher order types using the
rules $\sem{1}=1$, $\sem{A\times B} = \sem{A}\times \sem{B}$ and $\sem{\ptype
A}=\tripower\sem{A}$.
\item The interpretation of a list of types is given by
\[
\sem{A_1,\dots,A_n}=\sem{A_1}\times\dots\times\sem{A_n}
\]
The interpretation of a context $\Delta$ is the interpretation of the
associated list of types (obtained by syntactically removing the variables).
\item To each function symbol $f\in\sigf_{(A_1,\dots,A_n),B}$ we associate a
morphism $\sem{f}:\sem{A_1,\dots,A_n}\to\sem{B}$ in $\flc$.
\item To each relation symbol $R\in\sigr_{(A_1,\dots,A_n)}$ we associate a
predicate $\sem{R}\in\trip_{\sem{A_1,\dots,A_n}}$.
\item Now we can inductively define the semantics of terms by
\begin{align*}
\sem{\Delta\csep x_i:A_i} &= \pi_i \\
\sem{\Delta\csep f(t_1,\dots,t_n):B}&=\sem{f}\circ\langle\sem{\Delta\csep t_1
:A_1},\dots,\sem{\Delta\csep t_n :A_n}\rangle\\
\intertext{and of formulas by}
\sem{\Delta\csep\top} & = \top\in\trip_\sem{\Delta}\\
\sem{\Delta\csep\bot} & = \bot\in\trip_\sem{\Delta}\\
\sem{\Delta\csep\varphi\wedge\psi} &=
\sem{\Delta\csep\varphi}\wedge\sem{\Delta\csep\psi}\\
\sem{\Delta\csep\varphi\vee\psi} &=
\sem{\Delta\csep\varphi}\vee\sem{\Delta\csep\psi}\\
\sem{\Delta\csep\varphi\imp\psi} &=
\sem{\Delta\csep\varphi}\imp\sem{\Delta\csep\psi}\\
\sem{\Delta\csep s=t} &= \langle\sem{\Delta\csep s:B},\sem{\Delta\csep
t:B}\rangle^*(\eq_\sem{B})\\
\sem{\Delta\csep \exists y\vtp B\qdot
\varphi[y]}&=\exists_{\pi^-}\sem{\Delta,x\vtp B\csep \varphi[x]}\\
\sem{\Delta\csep \forall y\vtp B\qdot
\varphi[y]}&=\forall_{\pi^-}\sem{\Delta,x\vtp B\csep\varphi[x]}\\
\sem{\Delta\csep s\in t} &= \langle\sem{\Delta\csep t:\ptype
B},\sem{\Delta\csep s:B}\rangle^*(\ni_\sem{B})\\
\sem{\Delta\csep R(t_1,\dots,t_n)} & = \langle\sem{\Delta\csep t_1:
B_1},\dots,\sem{\Delta\csep t_n:B_n}\rangle^*(\sem{R})\\
\end{align*}
In the line for equality we use the notation $\eq_A=\exists_{\delta_A}(\top)$
where $\delta_A:A\to A\times A$ is the diagonal.
In the clauses for existential and universal quantification, $\pi^-$ denotes
the projection of type $\sem{\Delta,B}\to\sem{\Delta}$.

Observe that the interpretation of terms and formulas is compatible with types,
i.e.,
\[
\sem{\Delta  \csep t : A} : \sem{\Delta}\to\sem{A}\qtext{and}
\sem{\Delta\csep\varphi}\in\trip_{\sem{\Delta}}.
\]
\end{itemize}
\end{definition}
For the remainder of this section, $\Sigma=(\sigs,\sigf,\sigr)$ is a fixed
signature with a fixed interpretation $\sem{-}$ in a tripos
$\trip:\flx\to\fle$. Terms and formulas will always be terms and formulas
generated from $\Sigma$.
\begin{convention}
If $(\Delta\csep\psi)$ is a formula such that $\sem{\Delta\csep\psi}=\top$,
then we say that $\psi$ \emph{holds in $\trip$}. More generally, if
$\sem{\varphi_1}\wedge\dots\wedge\sem{\varphi_n}\leq \sem{\psi}$ holds for
formulas $\varphi_1,\dots,\varphi_n, \psi$ in context $\Delta$, then we say
that the judgment%
\footnote{As for terms and formulas, we will often suppress the context from
the
notation for judgments.}
\[
\Delta\csep\varphi_1,\dots,\varphi_n\vdash\psi
\]
 holds in $\trip$.
\end{convention}

The most important properties of $\sem{-}$ are the \emph{substitution lemma}
and the \emph{soundness theorem}, stated now.
\begin{lemma}[Substitution lemma for triposes]\label{lem_subs_trip}
Let $(\Delta\csep s_i:B_i), 1\leq i\leq n$ and $(\Delta'\csep
t[y_1,\dots,y_n]:C)$ be terms and let $(\Delta'\csep \varphi[y_1,\dots,y_n])$
be a formula, where $\Delta' = y_1\vtp B_1,\dots, y_n\vtp B_n$. Then we have
\begin{enumerate}
\item $\sem{\Delta\csep t[s_1,\dots,s_n]} = \sem{\Delta'\csep
t[y_1,\dots,y_n]}\circ\langle\sem{\Delta\csep s_1},\dots,\sem{\Delta\csep
s_n}\rangle$
\item $\sem{\Delta\csep \varphi[s_1,\dots,s_n]} = \langle\sem{\Delta\csep s_1}
,\dots,\sem{\Delta\csep s_n}\rangle^*(\sem{\Delta'\csep
\varphi[y_1,\dots,y_n]})$
\end{enumerate}
\qed
\end{lemma}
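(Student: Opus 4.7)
The plan is to prove (1) and (2) by structural induction: (1) on the term $(\Delta'\csep t:C)$, and (2) on the formula $(\Delta'\csep\varphi)$, invoking (1) at the atomic formulas involving terms. Throughout I abbreviate $\tau\coloneq\langle\sem{\Delta\csep s_1},\ldots,\sem{\Delta\csep s_n}\rangle:\sem{\Delta}\to\sem{\Delta'}$.

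For (1), the variable case reduces immediately to the defining equation $\pi_i\circ\tau=\sem{\Delta\csep s_i}$, and the cases for function applications and projections reduce to the inductive hypothesis by compatibility of tupling with post-composition. For (2), the propositional connectives $\top,\bot,\wedge,\vee,\imp$ are handled by clause~2 of Definition~\ref{def_tripos}, which asserts that reindexing preserves all Heyting algebra structure. The atomic formulas $R(t_1,\ldots,t_k)$, $s\in t$, and $s=t$ reduce to (1) together with functoriality of reindexing: by (1), the relevant tuples of term-interpretations factor as a tuple of $\sem{\Delta'\csep -}$'s followed by $\tau$, so the corresponding reindexing splits off a $\tau^*$ on the outside. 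For $s=t$ one further unfolds $\eq_B=\exists_{\delta_B}(\top)$, but $\eq_B$ itself is independent of the substitution, so the argument still reduces to this tuple identity.

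The main obstacle is the quantifier case, where I must establish the interchange
\[
\tau^*\,\exists_{\pi^-}\sem{\Delta',y\vtp B\csep\varphi[y]}=\exists_{\pi^-}(\tau\times\sem{B})^*\sem{\Delta',y\vtp B\csep\varphi[y]}.
\]
Granting this, the inductive hypothesis applied with the extended substitution $\tau\times\sem{B}:\sem{\Delta}\times\sem{B}\to\sem{\Delta'}\times\sem{B}$ --- which is exactly the interpretation of the extension of $[s_i/y_i]$ by the fresh variable $y$, since $\langle\sem{s_i}\circ\pi,\pi'\rangle=\tau\times\sem{B}$ --- rewrites the right-hand side as $\exists_{\pi^-}\sem{\Delta,y\vtp B\csep\varphi[s_1,\ldots,s_n]}$, which is the required conclusion. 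The interchange itself is precisely the Beck-Chevalley condition~\eqref{eq_bc} for the square with vertical projections $\pi^-$ and horizontal arrows $\tau\times\sem{B}$ on top and $\tau$ on the bottom; this fits the shape~\eqref{eq_bc_square} after the identifications $\sem{\Delta}\cong\sem{\Delta}\times 1$ and $\sem{\Delta'}\cong\sem{\Delta'}\times 1$, taking $g$ to be the terminal map $\sem{B}\to 1$. The universal case $\forall_{\pi^-}$ is handled symmetrically using the right-adjoint half of~\eqref{eq_bc}; once this Beck-Chevalley instance is identified, the remaining bookkeeping is routine.
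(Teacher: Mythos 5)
Your proposal is correct and takes exactly the route the paper intends: the paper states this lemma without proof, remarking only that it is proved by a standard structural induction on terms and formulas (as in Jacobs), and your handling of the quantifier case via the Beck--Chevalley instance~\eqref{eq_bc} with $g$ the terminal map $\sem{B}\to 1$ (modulo the harmless $\sem{\Delta}\cong\sem{\Delta}\times 1$ identification) is precisely the use case for which the paper's product-shaped formulation~\eqref{eq_bc_square} of (BC) was chosen. No gaps; the remaining bookkeeping you defer (weakening of the $s_i$, strict functoriality of reindexing on posetal fibers) is indeed routine.
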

The soundness theorem says that the interpretation that we described is
compatible with derivability of judgments in some logical system, thus we have
to clarify which logical system we use before stating it.
\begin{definition}
\textbf{Non-extensional higher order intuitionistic logic} is intuitionistic
predicate logic with explicit contexts of variables, formalized e.g.\ in
natural deduction, with the additional axiom
\[
\Delta,x\vtp A,y\vtp B\csep\Gamma\vdash \exists ! z\vtp
\, A\!\times\! B\qdot \pi_1(z)=x\wedge\pi_2(z)=y
\]
for product types, and
the additional \emph{comprehension scheme}
\[
\Delta\csep\Gamma\vdash \exists m\vtp\ptype B\; \forall y\in B\qdot y\in
m\leftrightarrow \varphi[y]
\]
for power types, where $(\Delta,y\in B\csep\varphi[y])$ is an arbitrary
formula.

Since the explicit handling of variable contexts $\Delta$ is not contained in
the standard presentation of intuitionistic logic, we give a complete natural
deduction system in Appendix~\ref{sec_nehoil}.
\end{definition}
Now the soundness theorem can be phrased as follows.
\begin{theorem}[Soundness theorem for triposes]\label{theo_sound_trip}
If the judgment \[\Delta\csep \varphi_1,\dots,\varphi_n\vdash \psi\] is
derivable in non-extensional higher order intuitionistic logic, then it holds
in $\trip$.
\qed
\end{theorem}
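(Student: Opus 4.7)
The plan is to proceed by induction on the derivation of the judgment $\Delta\csep\varphi_1,\dots,\varphi_n\vdash\psi$ in the natural deduction system of Appendix~\ref{sec_nehoil}, verifying that every inference rule is soundly interpreted by $\sem{-}$. The substitution lemma (Lemma~\ref{lem_subs_trip}) will be the indispensable workhorse: every rule involving substitution of terms for variables --- in particular the elimination rules for quantifiers and equality, together with the cut rule --- relies on translating a syntactic substitution into reindexing along the tuple $\langle\sem{s_1},\dots,\sem{s_n}\rangle$ on the semantic side.

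For the structural rules (weakening, contraction, exchange, assumption, cut) soundness is immediate, since reindexing along projections, diagonals and permutations is a Heyting algebra homomorphism by clauses~1 and~2 of Definition~\ref{def_tripos}. The propositional rules for $\top,\bot,\wedge,\vee,\imp$ reduce in each fiber $\trip_{\sem{\Delta}}$ to the universal properties of meets, joins and exponentials in a Heyting algebra, again combined with the substitution lemma to align interpretations across contexts.

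The quantifier rules are handled via the adjunctions $\exists_\pi\dashv\pi^*\dashv\forall_\pi$ along projections: $\forall$-introduction and $\exists$-elimination correspond to transposition across these adjunctions, while the dual rules amount to specialization at specific terms. The subtle point is that the elimination rules permit the bound variable to be instantiated by an arbitrary term sitting in a larger outer context, so the adjoint transposes must commute with reindexing along arbitrary maps of the form $f\times X$; this is exactly what the Beck--Chevalley condition~\eqref{eq_bc} delivers. Equality rules rest on the identity $\sem{s=t}=\langle\sem{s},\sem{t}\rangle^*(\eq_B)$ with $\eq_B=\exists_{\delta_B}(\top)$: reflexivity follows from the unit $\top\leq\delta_B^*\exists_{\delta_B}(\top)$ of $\exists_{\delta_B}\dashv\delta_B^*$, whereas Leibniz substitutivity requires Frobenius reciprocity, which, as noted below~\eqref{eq_bc_square}, is derivable from the presence of implication in each fiber.

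The hard part is the comprehension scheme, since it is the one rule that genuinely exercises the tripos-specific structure of weak power objects. To validate $\Delta\csep\Gamma\vdash\exists m\vtp\ptype B\;\forall y\vtp B\qdot y\in m\leftrightarrow\varphi[y]$, I would produce the semantic witness $m\coloneq\chi_B(\sem{\Delta,y\vtp B\csep\varphi[y]}):\sem{\Delta}\to\tripower\sem{B}$ furnished by Definition~\ref{def_tripos}(\ref{def_tripos_chi}). Unfolding $\sem{y\in m}$ via the substitution lemma rewrites it as $(\chi_B(\sem{\varphi})\times\sem{B})^*(\ni_{\sem{B}})$, which by the defining property of $\chi_B$ coincides with $\sem{\varphi[y]}$; the biconditional then holds reflexively in the fiber over $\sem{\Delta,y\vtp B}$ and transposes along $\pi^*\dashv\exists_\pi$ to yield the outer existential. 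The product-type axiom is verified analogously, with witness $\langle x,y\rangle$ obtained from the universal property of binary products in $\flc$. The only persistent bookkeeping hazard is making sure that every implicit invocation of~\eqref{eq_bc} correctly pushes substitutions past quantifiers in the outer context, but this is precisely the task for which the Beck--Chevalley condition is designed.
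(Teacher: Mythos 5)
Your proposal is correct and follows essentially the same route as the paper, which omits the proof with the remark that it is a standard induction on the structure of derivations in the style of Jacobs' book; your sketch simply fills in that induction with exactly the expected ingredients (substitution lemma, fiberwise Heyting structure, the adjunctions $\exists_\pi\dashv\pi^*\dashv\forall_\pi$ with Beck--Chevalley, Frobenius for the equality rules, and the weak power object $\chi_B$ as witness for comprehension). The only cosmetic caveat is that instantiating the bound variable $m$ at the morphism $\chi_B(\sem{\varphi})$ is a semantic reindexing rather than a literal application of the syntactic substitution lemma (one either computes the reindexing directly or enlarges the signature with a fresh function symbol denoting $\chi_B(\sem{\varphi})$), but this does not affect correctness.
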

The substitution lemma and the soundness theorem are proved by induction on the
structure of formulas/terms, and proofs respectively. This is fairly standard
and straightforward, similar proofs can be found
in~\cite{jacobs2001categorical}.

A direct consequence of the soundness theorem is that if we have a theory over
$\Sigma$ generated by a given set of axioms such that all the axioms hold in
$\trip$, then any statement that can be derived from the axioms does also hold
in $\trip$.

\subsection{The internal language of a tripos}

\begin{definition}[The internal language of a tripos]
Given a tripos $\trip:\flx\to\flc$, we define the signature 
$\Sigma^\trip=(\sigs^\trip,\sigf^\trip,\sigr^\trip)$, and at the same time an
interpretation $\semp{-}$ of the language $\holt{\trip}$ generated by
$\Sigma^\trip$ in
$\trip$, as follows:
\begin{itemize}
\item The set of base types is defined as 
\[\sigs_\trip=\objects{\flc} \quad\text{(the set of objects of $\flc$)},\]
 and $\semp{C}=C$ for $C\in\sigs^\trip$.
\item For $\Delta\in\hotsigp^*,A\in\hotsigp$, we define
\[
 \sigf^\trip_{\Delta,A} = \flc(\semp{\Delta},\semp{A})
\]
and $\semp{f}=f$ for $f\in\sigf^\trip_{\Delta,A}$.
\item For $\Delta\in\hotsigp^*$, we define
\[
 \sigr^\trip_{\Delta} = \trip_{\semp{\Delta}}
\]
and $\semp{R}=R$ for $R\in\sigr^\trip_{\Delta}$.
\end{itemize}
The \emph{internal language of} $\trip$ is the language that is generated by
$\Sigma^\trip$.
\end{definition}
In the following, we will use the internal language freely and heavily when
reasoning about triposes. 

The power object of $1$ has a special status since it is the type of
propositions, therefore we introduce the notations
\begin{align*}
\prop&\defpred\tripower 1\\
\triptr(p)&\defpred \exists x\vtp 1\qdot x\in p
\end{align*}
for the power object of $1$ and its element predicate.

\subsection{Tripos morphisms}\label{suse:trimorphs}

Fibrations form 2-categories in a natural way, the 1- and 2-cells being the
fibered functors
and fibered natural transformations (see~\cite[Definition~2.3]{streicherfib}).
For triposes, we only consider fibered
functors that are
compatible with a part of
the logical structure.

\begin{definition}\label{def:tripos-morphism}
 Let $\trip:\flx\to\flc$ and $\triq:\fly\to\fld$ be two triposes.
\begin{itemize}
 \item A \emph{tripos morphism} is a pair of functors $(F,\Phi)$ with
$F:\flc\to\fld$
  and $\Phi:\flx\to\fly$ with the following four properties.
\begin{enumerate}
\item The square
\[
\xymatrix{
\flx 
    \ar[r]^\Phi
    \ar[d]_\trip
    &
\fly
    \ar[d]^\triq
    \\
\flc 
    \ar[r]_F
    &
\fld
}
\]
commutes (on the nose).
\item $\Phi$ maps cartesian arrows to cartesian arrows.
\item $F$ preserves finite products%
.
\item For each $A\in\objects{\flc}$, the restricted functor
$\Phi_A:\trip_A\to\triq_{FA}$ preserves
finite meets.
\end{enumerate}
\item A tripos morphism $(F,\Phi)$ is called \emph{regular} if it satisfies the
following additional condition.
\begin{itemize}
 \item[5.]
$\Phi$ maps cocartesian
arrows to cocartesian arrows.
\end{itemize}
\end{itemize}
\end{definition}
Conditions 1. and 2. in the definition of tripos morphism say that $\Phi$ is a
fibered
functor over $F$. The others are compatibility postulates. Their effect is
best understood in terms
of the internal language. 

In order to express the interaction of tripos
morphisms and the internal language, we need some more definitions.
\begin{definitions}\label{def_coherence-stuff}
Let $\effi:\trip\to\triq$ be a tripos morphism between triposes
$\trip:\flx\to\flc$ and $\triq:\fly\to\fld$.
\begin{enumerate}
\item
We denote by $\folt{\trip}$ the fragment of the internal language of $\trip$
which is first order, i.e.\ without power types and the element predicate, but
with product types, and we denote by $\fots(\trip)$ the corresponding set of
(first order) types.
\item 
For $A\in\fots(\trip)$, we denote by $\into{F}{A}\in\fots(\triq)$ the
type that is obtained by replacing all the occurring base types $C\in{\flc}$ in
$A$ by $FC$.
For a list $\Delta=(C_1,\dots,C_n)$, of objects of $\flc$, we write
$\into{F}{\Delta}$ for the list $(\into{F}{C_1},\dots,\into{F}{C_n})$,
in the same way for contexts.
\item
Since first order types are built up only from finite products which are
preserved by $F$, there are obvious commutation isomorphisms which we name as
follows:
\begin{align*}
\sigma_A&:\sem{\into{F}{A}}\eiso F\semp{A}\\
\sigma_\Delta&:\sem{\into{F}{\Delta}}\eiso F\semp{\Delta}.
\end{align*}
\item 
For a function symbol $f\in\sigf^\trip_{\Delta,A}$, define $F_{\Delta,A}(f)\in
\sigf^\triq_{\into{F}{\Delta},\into{A}{\Delta}}$ as
\[
F_{\Delta,A}(f)=\sigma^{-1}_A \circ Ff \circ
\sigma_\Delta.
\]
\item\label{def_coherence-stuff_relsym}
For a relation symbol $\varphi\in\sigr^\trip_\Delta$, define
$\Phi_\Delta(\varphi)\in\sigr^\triq_{\into{F}{\Delta}}$ by
\[\Phi_\Delta(\varphi)=\sigma_\Delta^*(\Phi\varphi).\]
\item For a term $(\Delta\csep t:A)$ of $\folt{\trip}$, we
denote by
$(\into{F}{\Delta}\csep \into{F}{t}:\into{F}{A})$ the term of $\folt{\triq}$
that is obtained by
replacing each of the occurring function symbols $g\in\sigf^\trip_{\Delta',B}$
in $t$ by $F_{\Delta',B}(g)$. 
\item For a formula $(\Delta\csep \varphi)$ of $\folt{\trip}$, we denote by
$(\into{F}{\Delta}\csep \into{F}{\varphi})$ the formula  of $\folt{\triq}$ that
is obtained by
replacing each of the occurring function symbols $g\in\sigf^\trip_{\Delta',B}$
in $t$ by $F_{\Delta',B}(g)$, and each relation symbol
$\theta\in\sigr^\trip_\Delta$ by $F_\Delta(\theta)$.
\end{enumerate}
\end{definitions}

The interaction of tripos morphisms and internal language is now expressed by
the following lemma.
\begin{lemma}
Let $\effi:\trip\to\triq$ be a tripos morphism.
\begin{itemize}
\item Let $(\Delta\csep t:A)$ be a term in $\folt{\trip}$. We have
\[
\sigma_A\circ \sem{\into{F}{t}} = {F}{\sem{t}}\circ \sigma_\Delta
\]
\item Let $(\Delta\csep \varphi)$ be a formula in $\folt{\trip}$. If $\varphi$
is built from atomic formulas (excluding equality) using only $\wedge$ and
$\top$, then we have
\[\sem{\into{\Phi}{\varphi}} = \sigma_\Delta^*(\Phi\sem{\varphi}).\]
If $\effi$ is regular, then $\varphi$ may also contain $\exists$ and $=$.
\end{itemize}
\qed
\end{lemma}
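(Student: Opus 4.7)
The plan is to establish the two assertions simultaneously by induction on the structure of terms and formulas. The term assertion is proved first (and independently), after which the formula assertion is proved by induction on the formula, the atomic case invoking the term assertion and the definition of $\Phi_\Delta(\varphi)$.

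For the term assertion I would treat three cases. In the variable case $(\Delta\csep x_i:A_i)$, $\sem{x_i}$ and $\sem{\into{F}{x_i}}$ are both projections out of $\sem{\Delta}$ and $\sem{\into{F}{\Delta}}$ respectively; the square $\sigma_{A_i}\circ\sem{\into{F}{x_i}}=F\sem{x_i}\circ\sigma_\Delta$ is then just the statement that the product-preservation isomorphism $\sigma_\Delta$ is compatible with the projections, which is part of the data of $F$ preserving finite products. The pairing/projection case $(\Delta\csep \pi_i(t):B_i)$ follows by the induction hypothesis on $t$ together with the same coherence of $\sigma$ with projections. In the function symbol case $(\Delta\csep f(t_1,\dots,t_n):B)$, unfolding the definition $F_{\Delta',B}(f)=\sigma_B^{-1}\circ Ff\circ \sigma_{\Delta'}$ and using the inductive hypothesis on each $t_i$ gives the equality after a routine diagram chase.

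For the formula assertion, the atomic case $R(t_1,\dots,t_n)$ reduces to the term identity and the definition $\Phi_\Delta(\varphi)=\sigma_\Delta^*(\Phi\varphi)$ (clause~\ref{def_coherence-stuff_relsym} of Definitions~\ref{def_coherence-stuff}), combined with the fact that reindexing in $\triq$ is strict and that $\Phi$ preserves cartesian arrows. The $\top$ and $\wedge$ cases are immediate from condition~4 of Definition~\ref{def:tripos-morphism}, which says $\Phi_A$ preserves finite meets fibrewise, noting also that $\Phi$ commutes with reindexing by $\sigma_\Delta$ because cartesian arrows are preserved.

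The harder cases appear once $\effi$ is regular, so that $\exists$ and $=$ are allowed. For $(\Delta\csep \exists y\vtp B\qdot\varphi)$ one has $\sem{\exists y.\varphi}=\exists_{\pi^-}\sem{\varphi}$ with $\pi^-:\sem{\Delta,B}\to\sem{\Delta}$. By induction $\sem{\into{F}{\varphi}}=\sigma_{\Delta,B}^*(\Phi\sem{\varphi})$, and regularity of $\effi$ gives $\Phi\exists_{\pi^-}\sem{\varphi}\cong\exists_{F\pi^-}\Phi\sem{\varphi}$ (preservation of cocartesian arrows). One then has to commute $\exists$ past $\sigma_\Delta^*$, and this is exactly the Beck–Chevalley condition~\eqref{eq_bc} applied to the square whose vertical sides are $F\pi^-$ and $\sem{\into{F}{\pi^-}}$ and whose horizontal sides are $\sigma_\Delta,\sigma_{\Delta,B}$ — which has the shape~\eqref{eq_bc_square} because $F$ preserves products, so $\sigma$ is built from identities and projection isomorphisms of product type. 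The equality case $s=t$ then follows from the $\exists$ case together with the $\top$ case and the term identity, unfolding $\sem{s=t}=\langle\sem{s},\sem{t}\rangle^*\exists_{\delta_B}\top$. I expect the main obstacle to be the bookkeeping in the $\exists$ case: making sure the Beck–Chevalley square one invokes really is of the form~\eqref{eq_bc_square} after chasing through the definition of $\sigma$, and checking that the chosen representatives (rather than just isomorphism classes) line up strictly with the stated equality.
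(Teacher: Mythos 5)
Your proposal is correct and takes essentially the same route the paper intends: the lemma is stated there with only the remark that it is ``much easier to prove [than a coherence theorem] since we can do it by induction on the structure of terms and formulas,'' and your case analysis (variables/projections/function symbols via the coherence of $\sigma$ with products, atomic formulas via the definitions of $F_{\Delta,A}$ and $\Phi_\Delta$ together with preservation of cartesian arrows, $\wedge,\top$ via fibrewise meet preservation, and $\exists,=$ via preservation of cocartesian arrows) fills in exactly that induction. The bookkeeping worry you flag in the $\exists$ case is harmless: the square you need has invertible horizontal legs, and Beck--Chevalley is automatic for such squares (in posetal fibers $\sigma^{*}=\exists_{\sigma^{-1}}$, so both sides equal quantification along a single composite), so you need not force it into the shape~\eqref{eq_bc_square}; the same observation handles $\exists_{\delta}$ in the equality case, where cocartesian preservation applies to the diagonal directly rather than through the formula-level $\exists$ clause.
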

This lemma looks very much like a categorical coherence theorem, but it is
much easier to prove since we can do it by induction on the structure of terms
and formula.

An important consequence of the lemma is that validity of sequents is
preserved by tripos morphisms. Because we will heavily make use of this fact
later, we spell it out explicitly.
\begin{corollary}\label{cor_trimo-judgement}
Let $\effi:\trip\to\triq$ be a tripos morphism, and let
${\Delta\csep\Gamma\vdash\varphi}$ be a valid judgment in
$\folt{\trip}$
containing no equality and only $\wedge,\top$ as logical symbols. Then
${\into{F}{\Delta}\csep\into{\Phi}{\Gamma}\vdash\into{\Phi}{\varphi}}
$ is valid
in $\triq$. If $\effi$ is regular, then the statement also holds for judgments
containing  $\exists$ and $=$. \qed
\end{corollary}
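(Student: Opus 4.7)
The plan is to deduce the corollary as a direct consequence of the preceding lemma, together with the preservation properties built into the definition of tripos morphism. By definition, $\Delta\csep\varphi_1,\dots,\varphi_n\vdash\varphi$ holds in $\trip$ precisely when
\[
\sem{\varphi_1}\wedge\cdots\wedge\sem{\varphi_n}\;\leq\;\sem{\varphi}
\]
in the Heyting algebra $\trip_{\sem{\Delta}}$. I would first apply the lemma individually to each $\varphi_i$ and to $\varphi$ to rewrite
\[
\sem{\into{\Phi}{\varphi_i}}=\sigma_\Delta^*(\Phi\sem{\varphi_i})\qtext{and}\sem{\into{\Phi}{\varphi}}=\sigma_\Delta^*(\Phi\sem{\varphi}),
\]
which is legitimate since each formula in question lies in the fragment allowed by the lemma (or the enlarged fragment in the regular case).

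Next I would invoke the two relevant preservation properties: condition~4 of Definition~\ref{def:tripos-morphism} says that the fiberwise functor $\Phi_{\sem{\Delta}}$ preserves finite meets, and reindexing $\sigma_\Delta^*$ is monotone since it is a Heyting homomorphism. Combining these, apply $\Phi_{\sem{\Delta}}$ to the inequality above and then reindex along $\sigma_\Delta$ to obtain
\[
\sem{\into{\Phi}{\varphi_1}}\wedge\cdots\wedge\sem{\into{\Phi}{\varphi_n}}=\sigma_\Delta^*\bigl(\Phi(\sem{\varphi_1}\wedge\cdots\wedge\sem{\varphi_n})\bigr)\leq\sigma_\Delta^*(\Phi\sem{\varphi})=\sem{\into{\Phi}{\varphi}},
\]
which is precisely the assertion that $\into{F}{\Delta}\csep\into{\Phi}{\Gamma}\vdash\into{\Phi}{\varphi}$ holds in $\triq$. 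The regular case is identical, simply using the stronger form of the lemma.

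There is no real obstacle here: the corollary is a packaging of the lemma in the syntactic language of judgments, and the only thing to check is that the preservation of finite meets extends from single predicates to a conjunction of antecedents, which is immediate by induction on the length of $\Gamma$.
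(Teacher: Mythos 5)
Your proof is correct and is exactly the argument the paper leaves implicit (the corollary is stated with no written proof, as an immediate consequence of the preceding lemma): unfold validity as an inequality in the fiber $\trip_{\sem{\Delta}}$, apply the fiberwise meet-preserving and monotone functor $\Phi_{\sem{\Delta}}$, reindex along $\sigma_\Delta$, and rewrite both sides via the lemma's identity $\sem{\into{\Phi}{\psi}}=\sigma_\Delta^*(\Phi\sem{\psi})$. Nothing further is needed; the regular case goes through verbatim using the stronger form of the lemma.
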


The next definition gives the 2-cells in the 2-category of triposes.
\begin{definition}
Let $\trip:\flx\to\flc$ and $\triq:\fly\to\fld$ be two triposes and
consider two tripos morphisms $(F,\Phi),(G,\Gamma):\trip\to\triq$. A
\emph{transformation}
from $(F,\Phi)$ to $(G,\Gamma)$ is a natural transformation $\eta:F\imp G$ with
the property
that for all $A\in\objects{\flc}$ and all $\psi\in\trip_A$, we have
\[
a\vtp FA\csep\Phi\psi(a)\vdash\Gamma\psi(\eta_A(a)),
\]
or diagrammatically
\[
\xymatrix{
\psi
    &&
\Phi\psi
    \ar[r]
    &
\Gamma\psi
    \\
A
    &&
FA
    \ar[r]^{\eta_A}
    &
GA
}
\]
\end{definition}

\begin{definition}\label{def:dccat-trip}
We denote by $\cattrip$ the pre-equipment consisting of triposes, tripos
morphisms and tripos transformations, where the \regular\ 1-cells are the
regular tripos morphisms.
\end{definition}

It is straightforward to check that this data does indeed constitute a
pre-equipment. Furthermore, we have:
\begin{lemma}
The pre-equipment $\cattrip$ is geometric (in the sense of
Definition~\ref{def_preequipment}.\ref{def_preequipment_preequipment}).
\end{lemma}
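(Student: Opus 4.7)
The plan is to unwind what it means for $\cattrip$ to be geometric and then to construct fiberwise adjoints. By Definition~\ref{def_preequipment}, we must show that every tripos morphism $(F,\Phi):\trip\to\triq$ that admits a right adjoint $(G,\Gamma)$ in the underlying 2-category of $\cattrip$ is itself regular, i.e.\ $\Phi$ sends cocartesian arrows to cocartesian arrows. Since the fibers of $\trip$ and $\triq$ are posets, cocartesian arrows over $f:A\to B$ at $\psi\in\trip_A$ are witnessed by $\exists_f\psi$, and preservation of cocartesian arrows is equivalent to the equalities $\Phi_B\,\exists_f\psi=\exists_{Ff}\,\Phi_A\psi$ for all $f$ and $\psi$. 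So the goal reduces to proving this preservation of existential quantification along any base morphism.

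The crucial intermediate step is to show that, for each $A\in\flc$, the fiberwise component $\Phi_A:\trip_A\to\triq_{FA}$ admits a right adjoint given by $\eta_A^*\circ\Gamma_{FA}$, where $\eta$ is the base component of the unit. I would prove this directly: if $\Phi_A\psi\leq\chi$ then apply the order-preserving functor $\Gamma$, reindex along $\eta_A$, and invoke the tripos-transformation property of $\eta$ (namely $\psi\leq\eta_A^*\Gamma\Phi\psi$) to deduce $\psi\leq\eta_A^*\Gamma_{FA}\chi$. Conversely, apply $\Phi_A$, use fiberedness of $\Phi$ to rewrite $\Phi\,\eta_A^*\Gamma\chi=(F\eta_A)^*\Phi\Gamma\chi$, then use the tripos-transformation property of $\ve$ (namely $\Phi\Gamma\chi'\leq\ve^*\chi'$) and finally the triangle identity $\ve_{FA}\circ F\eta_A=\id_{FA}$ to collapse everything to $\chi$.

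With the fiberwise adjunction $\Phi_A\dashv\eta_A^*\Gamma_{FA}$ in hand, the preservation of $\exists_f$ is a routine shunt of adjoints. For $\psi\in\trip_A$, $f:A\to B$, and any $\chi\in\triq_{FB}$, the transpositions
\[
\Phi_B\,\exists_f\psi\leq\chi
\iff \exists_f\psi\leq\eta_B^*\Gamma\chi
\iff \psi\leq f^*\eta_B^*\Gamma\chi=\eta_A^*(Gf)^*\Gamma\chi=\eta_A^*\Gamma(Ff)^*\chi
\iff \Phi_A\psi\leq(Ff)^*\chi
\iff \exists_{Ff}\Phi_A\psi\leq\chi
\]
use, in order, fiberwise adjointness, $\exists_f\dashv f^*$, naturality of $\eta$ combined with fiberedness of $\Gamma$, fiberwise adjointness again, and $\exists_{Ff}\dashv(Ff)^*$. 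Since $\Phi_B\,\exists_f\psi$ and $\exists_{Ff}\,\Phi_A\psi$ have the same principal upper sets in the poset $\triq_{FB}$, they coincide.

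The main obstacle is bookkeeping in the second paragraph: carefully matching the direction of the inequalities in the tripos-transformation axioms for $\eta$ and $\ve$ with the placement of reindexings, and making clean use of the triangle identity. Once that fiberwise adjointness is pinned down, the rest is mechanical.
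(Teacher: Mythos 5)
Your proof is correct and takes essentially the same route as the paper: the paper likewise reads off the fiberwise/conjugate adjunction rule $\Phi\varphi\vdash f^*\psi \iff \varphi\vdash g^*(\Gamma\psi)$ (your $\Phi_A\dashv\eta_A^*\Gamma_{FA}$ in the case $f=\id_{FC}$) from the 2-categorical adjunction $\effi\dashv\ggamma$, and then shunts adjoints through $\exists_f\dashv f^*$ and $\exists_{Ff}\dashv(Ff)^*$ exactly as in your chain of equivalences --- you merely make explicit the derivation of that rule from the unit, counit and triangle identity, which the paper asserts without proof. One typo to fix: in your displayed chain $(Gf)^*$ should be $(GFf)^*$, since $f:A\to B$ lives in the base of $\trip$ and naturality of $\eta$ gives $\eta_B\circ f = GFf\circ\eta_A$.
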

\begin{proof}
Let $\effi\dashv\ggamma:\triq\to\trip$ be an adjunction between triposes
$\trip:\flx\to\flc$ and $\triq:\fly\to\fld$.
The fact that we have an adjunction means that we have the bidirectional rule
\[
\def\fCenter{\vdash}
\Axiom$\Phi\varphi\fCenter f^*\psi$
\doubleLine
\UnaryInf$\varphi\fCenter g^* (\Gamma\psi)$
\DisplayProof
\quad\qtext{with special case}\quad
\def\fCenter{\vdash}
\Axiom$\Phi\varphi\fCenter \psi$
\doubleLine
\UnaryInf$\varphi\fCenter \eta_C^* (\Gamma\psi)$
\DisplayProof
\]
for the entailment relations in the fibers of $\trip$ and $\triq$, where
$f:FC\to D$ and $g:C\to GD$ are conjugate to each other via $F\dashv G$,
$\varphi\in\trip_C$, $\psi\in\triq_D$, and in the special case we have
$f=\id_{FC}$.

Now the derivations
\[
\def\fCenter{\vdash}
\Axiom$\exists_f\varphi\fCenter \exists_f\varphi$
\UnaryInf$\varphi\fCenter f^*(\exists_f\varphi)$
\UnaryInf$\Phi\varphi\fCenter \Phi (f^*(\exists_f\varphi))\cong (Ff)^*(\Phi
(\exists_f\varphi))$
\UnaryInf$\exists_{Ff}(\Phi\varphi)\fCenter \Phi (\exists_f\varphi)$
\DisplayProof
\]
and
\[
\Axiom$\exists_{Ff}(\Phi\varphi)\fCenter\exists_{Ff}(\Phi\varphi)$
\UnaryInf$\Phi\varphi\fCenter (Ff)^*(\exists_{Ff}(\Phi\varphi))$
\UnaryInf$\varphi\fCenter
\eta_A^*(\Gamma((Ff)^*(\exists_{Ff}(\Phi\varphi))))\cong
f^*(\eta_B^*(\Gamma(\exists_{Ff}(\Phi\varphi))))$
\UnaryInf$\exists_f\varphi\fCenter \eta_B^*(\Gamma(\exists_{Ff}(\Phi\varphi)))$
\UnaryInf$\Phi(\exists_f\varphi)\fCenter \exists_{Ff}(\Phi\varphi)$
\DisplayProof
\]
where $f:A\to B$ in $\flc$ show that $\effi$ commutes with existential
quantification.
\end{proof}

\section{Q-toposes}\label{sec_q_toposes}

We introduce q-toposes as an intermediate step in our decomposition of the
\tttc.

Q-toposes are similar to quasitoposes%
\footnote{Quasitoposes are due to Penon~\cite{penon1973quasi}, my principal
reference is the
Elephant~\cite[A2.6]{elephant1}.
},
in particular they have a classifier for strong monomorphisms, but they have
less structure (that's why they have fewer letters in their name). Contrary to
quasi-toposes, q-toposes are not required to be locally cartesian closed (not
even cartesian closed), nor do they need to have all colimits. 

Fortunately it turns out that in order to get a working higher order logic,
neither of these features is needed, and it suffices to postulate the
quasitopos version of powersets. In the end, we can not entirely do without
colimits; we postulate pullback-stable effective quotients of \emph{strong
equivalence relations}, because we need them in a later proof.

The main difference between \emph{toposes} and \emph{quasitoposes} is that not
every monomorphism in a quasitopos corresponds to a predicate in the internal
logic. We rather restrict attention to a certain subclass of them, the
\emph{cocovers}. In the case of quasitoposes, there are two possible
definitions of cocovers, one by orthogonality, corresponding to the concept of
\emph{strong monomorphism}, and one by a factorization property, corresponding
to \emph{extremal monomorphisms}. These two definitions coincide in the
presence of pushouts, which we have in a quasitopos. For the weaker
notion of q-topos,
on the contrary, we do not require all pushouts, and we have to be careful
which definition to choose. It turns out that the extremality definition is too
weak, since the ensuing class of arrows might not even be stable under
pullbacks.
Therefore, we define cocovers (and at the same time \emph{covers}, which we
will need later) as follows.
\begin{definition}
 Let $\qcc$ be a category. 
\begin{itemize}
 \item 
Let $f:A\to B$, $g:X\to Y$ in $\qcc$.
We say that $f$ is \emph{left orthogonal} to $g$ (or alternatively that $g$ is
\emph{right orthogonal} to $f$), if for any commuting square
\[
\xymatrix{
A \ar[r]\ar[d]_f& X\ar[d]^g \\
B\dashed[ru]^h\ar[r] & Y
}\]
there exists a \emph{unique} $h:B\to X$ such that the two triangles commute.
\item
An epimorphism $e:A\to B$ is called a \emph{cover}, or a \emph{strong
epimorphism}, if it is left orthogonal to all monomorphisms.
\item
A monomorphism $m:X\to Y$ is called a \emph{cocover}, or a \emph{strong
monomorphism}, if it is right orthogonal to all epimorphisms.
\end{itemize}
\end{definition}
In the presence of equalizers, we can drop the assumption that $e$ is a
epimorphism from the definition of cover, since it follows from the
orthogonality condition. Similarly, in the presence of kernel pairs and
coequalizers of kernel pairs, we can drop the assumption that $m$ is a
monomorphism from the definition of cocover. We will denote covers by the
arrow `$\ecov$', and cocovers by the arrow `$\ecocov$'. 

It is easy to see that cocovers are stable under arbitrary pullbacks, and this
allows us to construct a presheaf%
\footnote{To avoid having to deal with size issues, we just assume that $\qcc$
is
small with respect to some universe. See also Section~\ref{suse_conv}.}
\[
 \subc : \qcc^\op\to\catset,
\]
which assigns to each object $A$ the set of isomorphism classes of cocovers
with common codomain $A$. 

In order to be able to phrase the definition of q-topos in a concise manner, we
send three more definitions ahead. For the first one, we assume that the reader
is acquainted  with the concept of an equivalence relation in a finite limit
category as a monomorphism $\rho: R\emono A\times A$ with certain properties.
The precise definition can be found e.g.\ in
\cite[Definition~1.3.6]{elephant1}. 
\begin{definition}
Let $\qcc$ be a category with finite limits.
\begin{itemize}
\item
A \emph{strong equivalence relation} is an equivalence relation $\rho: R\ecocov
A\times A$ which is a strong monomorphism.
\item
An \emph{effective quotient} of an equivalence relation
$\rho=\langle\rho_1,\rho_2\rangle: R\emono A\times A$ in $\qcc$ is a
coequalizer $e:A\eepi Q$ of $\rho_1,\rho_2:R\rightrightarrows A$ whose kernel
pair is $\rho_1,\rho_2$.

(Observe that equivalence relations which have effective quotients are
necessarily strong.)
\item
An object $A$ of $\qcc$ is called \emph{exponentiating} if the presheaf
\[ \qcc(-\times X,A) \]
is representable for all objects $X$.
\end{itemize}
\end{definition}

Now the definition of q-topos is the following.

\begin{definition}\label{def:q-topos}
A q-topos is a finite limit category $\qcc$ such that
\begin{itemize}
\item
all presheaves
$
\subc(-\times A) : \qcc^\op\to\catset
$
are representable,
\item
$\qcc$ has effective quotients of strong equivalence relations, and
\item
regular epimorphisms are stable under pullback.
\end{itemize}
\end{definition}
An immediate consequence of the definition is:
\begin{lemma}\label{lem_q-topos-regular}
Any q-topos is a \emph{regular category}. This implies that the class of
regular epimorphisms coincides with the class of covers.
\end{lemma}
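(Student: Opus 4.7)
The plan is to verify the three defining properties of a regular category: existence of finite limits, existence of pullback-stable coequalizers of kernel pairs, and — equivalently — existence of pullback-stable regular epi/mono factorizations of arbitrary morphisms. Finite limits and pullback stability of regular epis are explicit axioms of a q-topos, so the real work is to construct the factorization. For a given $f\colon A\to B$, I would form its kernel pair $(k_1,k_2)\colon K\rightrightarrows A$, take a coequalizer $e\colon A\eepi Q$, and factor $f=m\circ e$; the two things to check are that the kernel pair is covered by the q-topos axiom, and that the induced $m$ is a monomorphism.

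For the first point, note that the canonical mono $\langle k_1,k_2\rangle\colon K\emono A\times A$ is an equalizer, namely of $f\pi_1$ and $f\pi_2$ from $A\times A$ to $B$; hence it is a regular mono and in particular a strong mono. Thus the kernel pair is a strong equivalence relation on $A$, and the q-topos axiom supplies an effective quotient $e\colon A\eepi Q$. Since $e$ is a coequalizer it is a regular epi, and $fk_1=fk_2$ gives a unique $m\colon Q\to B$ with $f=me$.

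The main obstacle is showing $m$ is mono. I would argue that the kernel pair $M=Q\times_B Q$ of $m$ is trivial. The canonical comparison $(e,e)\colon K\to M$ is obtained by pulling back $e$ twice along the projections of $M$, so by pullback-stability of regular epis it is itself a regular epi, hence an epi. Because $ek_1=ek_2$, this map factors through the diagonal $\Delta\colon Q\to M$ as $\Delta\circ ek_1$; since a composite $\Delta\circ(ek_1)$ that is epi forces $\Delta$ to be epi, and $\Delta$ is already a split mono (section of either projection $m_i$), we get $m_1\Delta=m_2\Delta=\id$ and $\Delta$ epi, so $m_1=m_2$. This says exactly that $m$ is a monomorphism. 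Combined with pullback-stability of regular epis (which transports through this factorization to yield pullback-stability of the strong epi/mono factorization), $\qcc$ is regular.

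For the final claim, regular epis coincide with covers. Every regular epi $e=\mathrm{coeq}(a,b)$ is a strong epi by the standard diagonal-fill argument: given a square against a mono $m$, the outer commuting data forces $ga=gb$ by cancelling $m$, yielding a unique factorization through the coequalizer, which then provides the required lift. Conversely, if $f$ is a cover, apply the regular epi/mono factorization $f=me$ just constructed; the orthogonality of the cover $f$ against the mono $m$ gives a section of $m$, forcing $m$ to be an isomorphism, so $f$ is a regular epi.
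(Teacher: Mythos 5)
Your proof is correct and follows essentially the same route as the paper's: the key observation in both is that kernel pairs are strong equivalence relations (the paper sees them as pullbacks of the diagonal, you as the equalizer of $f\pi_1,f\pi_2$ --- both valid), so the q-topos axioms directly supply coequalizers of kernel pairs and their pullback-stability. The only difference is that the paper cites Borceux (Definition~2.1.1 and Proposition~2.1.4) for the standard regular-category facts you prove inline, namely that the induced comparison $m$ is monic (via the epic composite of pullbacks of $e$ through the diagonal of $m$'s kernel pair) and that regular epimorphisms coincide with covers.
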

\begin{proof}
It is possible to define a regular category as a a category with finite
limits and coequalizers of kernel
pairs, where regular epimorphisms are stable under pullbacks (See e.g.
\cite[Definition~2.1.1]{borceux2}, except that there not all finite limits are
assumed). Kernel pairs are always strong equivalence relations, 
described as the pullback of $\delta_B:B\to B\times B$ along $f\times f$. 
hence the fact that every q-topos is regular follows directly from the
definition. The fact that regular epimorphisms coincide with covers is proved
e.g.\ in \cite[Proposition~2.1.4]{borceux2}.
\end{proof}

The following lemma shows how to rephrase the first condition of
Definition~\ref{def:q-topos} in a way that is closer to the internal language
which will be
introduced in the next section.
\begin{lemma}
In a finite limit category $\qcc$, the
presheaves
\[
 \subc(-\times A) : \qcc^\op\to\catset
\]
are \emph{all} representable if and only if the presheaf
\[
 \subc(-) : \qcc^\op\to\catset
\]
can be represented by an exponentiating object.
\qed
\end{lemma}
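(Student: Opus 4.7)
The plan is to prove both implications by chasing natural isomorphisms, with the key observation that representability of $\subc(-)$ on its own falls out of the hypothesis by specializing $A = 1$.

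For the forward direction, I would first apply the assumption with $A = 1$: since $-\times 1 \cong \id_\qcc$, representability of $\subc(-\times 1)$ gives an object $\Sigma$ with a natural isomorphism $\qcc(-,\Sigma) \cong \subc(-)$. To show $\Sigma$ is exponentiating, fix $X$ and compute, using this isomorphism evaluated at objects of the form $C \times X$,
\[
\qcc(C \times X,\Sigma) \;\cong\; \subc(C \times X),
\]
natural in $C$. The right-hand side is representable by hypothesis, so $\qcc(-\times X,\Sigma)$ is representable, which is exactly the exponentiating condition on $\Sigma$.

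For the converse, assume $\Sigma$ represents $\subc(-)$ and is exponentiating, and fix $A \in \qcc$. The exponentiating assumption yields an object $\Sigma^A$ with $\qcc(-\times A,\Sigma) \cong \qcc(-,\Sigma^A)$ naturally. Combining this with the natural isomorphism $\qcc(C \times A,\Sigma) \cong \subc(C \times A)$ (instantiating the representability of $\subc(-)$ at $C \times A$) gives
\[
\qcc(-,\Sigma^A) \;\cong\; \subc(-\times A),
\]
so $\subc(-\times A)$ is represented by $\Sigma^A$.

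There is no real obstacle here; the only thing to be a bit careful about is that the natural isomorphism $\qcc(-,\Sigma) \cong \subc(-)$ is being evaluated at the varying object $C \times X$ (respectively $C \times A$), and that this operation is natural in $C$, which is immediate. Everything else is formal Yoneda manipulation, and no further structure of $\qcc$ beyond finite products is required.
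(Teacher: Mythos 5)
Your proof is correct, and it is exactly the routine argument the paper has in mind: the lemma is stated there with an immediate \qed{} precisely because it reduces to this formal Yoneda manipulation (specialize to $A=1$ using $-\times 1\cong\id_\qcc$ for one direction, transport representability across the composite natural isomorphisms $\qcc(-,\Sigma^A)\cong\qcc(-\times A,\Sigma)\cong\subc(-\times A)$ for the other). Nothing further is needed.
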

We will always denote the object representing $\subc(-)$ by $\Omega$ and the
element of $\subc(\Omega)$ which induces the natural isomorphism (generally
known as `generic predicate') by $\toptr$. 
\begin{lemma}\label{lem_cocov_reg}
\begin{enumerate}
 \item 
The domain of $\toptr:U\ecocov \Omega$ is terminal.
\item
The class of cocovers coincides with the class of regular monomorphisms.
\end{enumerate}
\end{lemma}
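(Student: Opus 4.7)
My plan is to treat both parts via the universal property of the classifier $\toptr : U\ecocov\Omega$ as the representing element of the functor $\subc$, so that for every cocover $m:X\ecocov A$ there is a unique $\chi_m:A\to\Omega$ with a pullback square
\[
\vcenter{\xymatrix{X\ar[r]\ar@{ >->}[d]_m & U\ar@{ >->}[d]^\toptr \\
A\ar[r]_{\chi_m} & \Omega.}}
\]

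For part (1), I want to show that every object $X$ has a unique map to $U$. Existence: apply the classifier to the cocover $\id_X:X\ecocov X$, obtaining $\chi_{\id_X}:X\to\Omega$, and read off the top edge of the resulting pullback square as a canonical arrow $!_X:X\to U$. Uniqueness: given any $f:X\to U$, I want to show that $\toptr\circ f$ classifies $\id_X$. The key observation is that the pullback $P$ of $\toptr$ along $\toptr\circ f$ receives both $f:X\to U$ (over $\toptr\circ f$) and $\id_X:X\to X$, hence a mediating map $X\to P$ which is a section of the cocover $P\ecocov X$; since cocovers are mono, this section is an isomorphism, so $\toptr\circ f = \chi_{\id_X}$. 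As $\toptr$ is monic, this forces $f$ to be unique, completing the proof that $U$ is terminal.

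For part (2), the inclusion \emph{cocover $\Rightarrow$ regular mono} follows immediately from part (1): the classifying pullback exhibits $m:X\ecocov A$ as the pullback of $\top:=\toptr:1\to\Omega$ along $\chi_m$, which (since the apex is $1$) is exactly the equalizer of the pair $\chi_m,\;\top\circ{!_A}:A\rightrightarrows\Omega$. The reverse inclusion \emph{regular mono $\Rightarrow$ cocover} is the standard argument: if $m:X\to A$ equalizes $f,g:A\rightrightarrows B$ and we are given a commutative square with an epimorphism $e:C\eepi D$ on the left and $v:D\to A$ on the right, then $fv\circ e=gv\circ e$ forces $fv=gv$, so $v$ factors uniquely through $m$, giving the required orthogonality.

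The only step requiring any care is the uniqueness clause in (1); the rest is bookkeeping with the classifier and a standard lemma about equalizers. Note that once (1) is established the classifier can henceforth be written $\top:1\ecocov\Omega$, which is the form used implicitly in (2).
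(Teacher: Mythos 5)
Your proposal is correct and takes essentially the same route as the paper: your part (1) is just the unfolded version of the paper's one-line argument that postcomposition with $\toptr$ gives a bijection between $\qcc(A,U)$ and the (unique, by representability of $\subc$) arrows $f:A\to\Omega$ with $f^*\toptr$ invertible. For part (2) the paper likewise presents a cocover as the equalizer of $\chi_m$ and $\toptr\circ{!_A}$, and simply cites as well known the standard orthogonality argument that regular monomorphisms are strong, which you spell out.
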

\begin{proof}
\emph{Ad 1.}
The postcomposition map $\toptr\circ - : \qcc(A,U)\to\qcc(A,\Omega)$ induces a
bijection between $\qcc(A,U)$ and arrows $f:A\to \Omega$ such that $f^*\toptr$
is an isomorphism. For each $A$, there is exactly one such arrow.

\emph{Ad 2.}
It is well known that in every category, regular monomorphisms are strong.
Conversely, every cocover $m:U\ecocov A$ is the equalizer of its classifying
map $\chi_m:A\to\Omega$ and $\toptr\,\circ \,!_A$.
\end{proof}

\subsection{The logic of q-toposes}

The fibers of the presheaf $\subc$ have a natural ordering which is the
inclusion of subobjects, i.e., for $m: U\ecocov A,n:V\ecocov A$
\[ m\vdash_A n\qtext{iff} \exists h:U\to V\qdot n h = m. \]

This ordering allows us to view $\subc$ (quotiented out by mutual inclusion) as
a presheaf of posets, and the  posetal fibration that we obtain from this
presheaf via the Grothendieck construction is isomorphic to the \emph{fibration
of cocovers}
\begin{equation}
\funs\qcc=\partial_1:\mathrm{coc}(\qcc)\to\qcc\label{eq_fib_cc},
\end{equation}
i.e.\ the quotient by equivalence of the full subfibration on cocovers of the
fundamental fibration $\partial_1:\commacat{\qcc}{\qcc}\to\qcc$.

The fibration of cocovers on a q-topos $\qcc$ is the home of the internal logic
of $\qcc$. It is easy to see that it has fiberwise finite meets, which are just
given by pullbacks.
In this section we show that it is even a tripos, which will enable us later to
define the forgetful functor from the pre-equipment of q-toposes to the
pre-equipment of triposes.
To do this we proceed in essentially the same way as Lambek and Scott do in
\cite{lambekscott86} for the definition of the internal language of a topos%
\footnote{Apparently this way of bootstrapping the internal logic of a topos is
originally due to Boileau and Joyal~\cite{boileau1981logique}.}.

To begin, we outline of the general strategy. First, we define a kind of
minimal internal language of a q-topos $\qcc$, whose term constructors are
projections, subset comprehension, element relation and equality. This internal
language is called \emph{type theory based on equality} in
\cite{lambekscott86}, we will refer to it as the \emph{core calculus}. 
We view terms of type $\Omega$ of the core calculus as predicates, and give a
intuitionistic sequent calculus style system of inference rules for them. The
fact that predicates are only special terms reflects the higher order nature of
the language.

Following Lambek and Scott, we will then show how to encode all propositional
connectives and quantifiers in the core calculus in such a way that the usual
rules of intuitionistic logic can be derived from the rules of the core
calculus. 
The claim that the fibration of cocovers is a tripos then follows almost
directly, because the terms of type $\Omega$ of the language correspond to the
predicates in the fibration of cocovers. The only thing that is missing is full
quantification (the language only gives quantification along projections), but
using a standard encoding, quantification along arbitrary morphisms can also be
obtained.

\medskip

The core calculus of a q-topos $\qcc$ is given in Table~\ref{table:core
calculus}. 
\begin{table}
\noindent\framebox[\textwidth]{\parbox{.96\textwidth}{
\medskip
\textbf{Types:}
\[A::= X \vertor 1 \vertor \Omega \vertor PA \vertor A\times A\qquad
X\in\objects{\qcc}\]

\smallskip

\textbf{Terms:}

\noindent
We use $\Delta$ to denote a context $x_1\vtp A_1,\ldots ,x_n\vtp A_n$ of typed
variables.
\medskip

\noindent\hspace{.5cm}
\def\fCenter{\csep}
\noindent\begin{tabular*}{\textwidth}{@{}c@{$\quad\qquad$}c}
\def\fCenter{}
\hspace{-.34cm}
\AX$\fCenter$
\def\fCenter{\csep}
\RightLabel{$\scriptstyle{(i=1,\dots ,n)}$}
\UI$\Delta \fCenter x_i : A_i$
\DP &
\def\fCenter{}
\hspace{-.34cm}
\AX$\fCenter$
\def\fCenter{\csep}
\UI$\Delta\fCenter \ast:1$ 
\DP\\[\bigskipamount]
\AX$\Delta,x\vtp A\fCenter \varphi[x]:\Omega$
\UI$\Delta\fCenter\{x | \varphi[x]\}:PA$
\DP &
\AX$\Delta\fCenter a:A$
\AX$\Delta\fCenter b:B$
\BI$\Delta\fCenter (a,b):A\times B$
\DP \\[\bigskipamount]
\AX$\Delta\fCenter a:A$
\AX$\Delta\fCenter M:PA$
\BI$\Delta\fCenter a\in M:\Omega$
\DP &
\AX$\Delta\fCenter a:A$
\AX$\Delta\fCenter a':A$
\BI$\Delta\fCenter a=a':\Omega$
\DP \\[\bigskipamount]
\AX$\Delta\fCenter a:X$
\RightLabel{$\, f\in\qcc(X,Y)$}
\UI$\Delta\fCenter f(a):Y$
\DP &
\end{tabular*}

\medskip

\def\fCenter{\vdash}
\textbf{Deduction rules:}

\smallskip

Here, $p_1,\dots,p_n,p,q$ denote terms of type $\Omega$, and $\Gamma$ denotes a
sequence of such terms.

\noindent
\begin{tabular}{@{}cc@{}}
\hspace{-.34cm}
\def\fCenter{}
\Axiom$\fCenter\phantom{|}$ 
\def\fCenter{\vdash}
\RightLabel{\vbox{\hbox{Ax}\vspace{1mm}\hbox{$\scriptstyle{(i=1,\dots ,n)}$}}}
\UnaryInf$\Delta\csep p_1,\dots,p_n\fCenter  p_i$
\DisplayProof
&
\Axiom$\Delta\csep \Gamma\fCenter  p$ 
\Axiom$\Delta\csep \Gamma,p \fCenter  q$
\RightLabel{Cut}
\BinaryInf$\Delta\csep \Gamma\fCenter  q$
\DisplayProof
\\[\bigskipamount]
\hspace{-.34cm}
\def\fCenter{}
\Axiom$\fCenter\phantom{|}$ 
\def\fCenter{\vdash}
\RightLabel{$=$R}
\UnaryInf$\Delta\csep \Gamma\fCenter  t=t$
\DisplayProof 
&
\Axiom$\Delta,x\vtp A\csep \Gamma\hspace{.916cm}\fCenter \varphi[x,x]$ 
\RightLabel{$=$L  					}
\UnaryInf$\Delta\csep \Gamma,s=t\fCenter \varphi[s,t]$
\DisplayProof
\\[\bigskipamount]
\Axiom$\Delta,x\vtp A\csep\Gamma\fCenter  p[x]=(x\in M)$
\RightLabel{P-$\eta$}
\UnaryInf$\Delta \csep \Gamma \fCenter \{ x| p[x]\}= M$
\DisplayProof
&
\hspace{-.34cm}
\def\fCenter{}
\Axiom$\fCenter\phantom{|}$ 
\def\fCenter{\vdash}
\RightLabel{P-$\beta$}
\UnaryInf$\Delta\csep \Gamma\fCenter (a\in\{x|p[x]\})=p[a]$
\DisplayProof 
\\[\bigskipamount]
\hspace{-.34cm}
\def\fCenter{}
\Axiom$\fCenter\phantom{|}$ 
\def\fCenter{\vdash}
\RightLabel{$1$-$\eta$}
\UnaryInf$\Delta\csep \Gamma\fCenter t=\ast$
\DisplayProof 
&
\Axiom$\Delta\csep\Gamma,p \fCenter q$ 
\Axiom$\Delta\csep\Gamma,q \fCenter p$
\RightLabel{Ext}
\BinaryInf$\Delta\csep\Gamma\fCenter p=q$
\DisplayProof 
\\[\bigskipamount]
\Axiom$\Delta\csep\Gamma\fCenter (a_1,a_2)=(a'_1,a'_2)$ 
\RightLabel{\vbox{\hbox{$\times$-$\beta$}\vspace{1mm}\hbox{$\scriptstyle(i=1,
2)$}}}
\UnaryInf$\Delta\csep\Gamma\fCenter a_i=a_i'$
\DisplayProof 
&
\Axiom$\Delta, x\vtp A, y\vtp B\csep \Gamma,t=(x,y)\fCenter p[t]$
\RightLabel{$\times$-$\eta$}
\UnaryInf$\Delta\csep \Gamma\hspace{1.59cm}\fCenter p[t]$
\DisplayProof 
\end{tabular}
}}
\caption{The core calculus}\label{table:core calculus}
\end{table}

The interpretation of the language in $\qcc$ is given as follows.
Types are inductively interpreted by objects of $\qcc$, where $\sem{X} = X$,
$\sem{1} = 1$, $\sem{\Omega}=\Omega$, $\sem{PA} = \Omega^{\sem{A}}$, and
$\sem{A\times B} = \sem{A}\times \sem{B}$. 
Contexts are interpreted as cartesian products of their constituents, and terms
are interpreted by suitably typed morphisms as follows:
\begin{align*}
\sem{\Delta\csep x_i} & \;=\; \pi_i\\
 \sem{\Delta\csep\ast} &\;=\;\; !_{\sem{\Delta}}\\
\sem{\Delta\csep\{x | \varphi[x]\}} &\;=\; \Lambda(\sem{\Delta,x\csep
\varphi[x]})\\
\sem{\Delta\csep a\in M}&\;=\;\ve_\sem{A}\circ\langle\sem{\Delta\csep
M},\sem{\Delta\csep a}\rangle\\
\sem{\Delta\csep (a,b)} & \;=\; \langle\sem{\Delta\csep a},\sem{\Delta\csep
b}\rangle\\
\sem{\Delta\csep s=t} & \;=\; 
\chi_=\circ\langle\sem{\Delta\csep s},\sem{\Delta\csep t}\rangle\\
\sem{\Delta\csep f(a)} &\;=\; f\circ \sem{\Delta\csep a}
\end{align*}
Here $\pi_i$ denotes the suitable projection, $!_{\sem{\Delta}}$ denotes the
terminal projection, $\Lambda:\qcc(\sem{\Delta,x\vtp
A},\Omega)\to\qcc(\sem{\Delta},\Omega^{\sem{A}})$ is the exponential transpose
operation, $\ve_\sem{A}:\Omega^\sem{A}\times\sem{A}\to\Omega$ is evaluation,
and $\chi_=:A\times A\to\Omega$ is the classifying map of the diagonal
$\delta_A:A\to A\times A$, i.e., the map that corresponds to
$\delta_A\in\subc(A\times A)$ via the bijection $\subc(A\times
A)\cong\qcc(A\times A,\Omega)$ ($\delta_A$ is indeed in $\subc(A\times A)$,
since it is a split mono and therefore necessarily a cocover).

We make some comments on the deduction rules.
The first two rules, (Ax) and (Cut), are just the usual structural rules known
from sequent calculus style systems. ($=$L) and ($=$R) are equivalent to the
usual rules for equality in first order logic. The rules ($P$-$\beta$) and
($P$-$\eta$) correspond to the $\beta$- and $\eta$-rules of lambda calculus
if we identify subset comprehension and $\in$ with abstraction and application.
Together with the rule (Ext), which says that two predicates which are
equivalent are already equal, $P$-$\eta$ can be read as saying that \emph{two
sets are already equal if they have the same elements}, which is a more
familiar phrasing of set-theoretic extensionality. Finally, there are a
$\beta$- and an $\eta$ rule for pairing, and an $\eta$-rule for the unit type.

For the core calculus, we have a substitution lemma and a soundness theorem,
which we state without proof because the proofs are standard and consist mainly
of unfolding of definitions.
\begin{lemma}[Substitution lemma]\label{lem_substilem}
For well formed terms ${\Delta\csep s_i\ttp B_i}, 1\leq i\leq n$, and
${x_1\vtp
B_1,\dots,x_n\vtp B_n\csep t : C}$, we have
\[
 \sem{\Delta\csep t[s_1/x_1,\dots,s_n/x_n]}=\sem{x_1,\dots,x_n\csep
t}\circ\langle \sem{\Delta\csep s_1},\dots,\sem{\Delta\csep s_n}\rangle.
\]
\qed
\end{lemma}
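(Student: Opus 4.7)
The plan is to proceed by induction on the structure of the term $(x_1\vtp B_1,\dots,x_n\vtp B_n \csep t : C)$, using the seven term-formation rules of the core calculus given in Table~\ref{table:core calculus}. Throughout, I write $\vec{s}$ for the tuple $\langle\sem{\Delta\csep s_1},\dots,\sem{\Delta\csep s_n}\rangle : \sem{\Delta}\to\sem{B_1}\times\cdots\times\sem{B_n}$, and note that syntactic substitution of a variable $x_i$ by $s_i$ commutes with term formation in the obvious way, which is what lets the induction go through.

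The base cases are immediate: for a variable $x_i$, the substituted term is $s_i$ and the right-hand side is $\pi_i\circ\vec{s}=\sem{\Delta\csep s_i}$ by the universal property of the product; and for $\ast$, uniqueness of maps into the terminal object closes the case. The cases for pairing $(a,b)$, equality $a=a'$, the element relation $a\in M$, and function application $f(a)$ all reduce to the induction hypothesis composed with universal properties: for pairing we use that $\langle u,v\rangle\circ h=\langle u\circ h, v\circ h\rangle$; for equality and element, we additionally use that postcomposing $\langle-,-\rangle$ with $\chi_=$ or $\ve_{\sem{A}}$ preserves this form; and for function application we use associativity of composition.

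The only delicate case is subset comprehension $\{x\mid\varphi[x]\}$, which requires naturality of the exponential transpose $\Lambda$. Here the inductive hypothesis applied to $(x_1,\dots,x_n,x\csep\varphi[x])$ gives
\[
\sem{\Delta,x\csep\varphi[s_1/x_1,\dots,s_n/x_n,x/x]}=\sem{\vec{x},x\csep\varphi[x]}\circ(\vec{s}\times\id_{\sem{A}}),
\]
so that applying $\Lambda$ to both sides and using the naturality identity $\Lambda(g\circ(h\times\id))=\Lambda(g)\circ h$ yields exactly the desired equation $\sem{\Delta\csep\{x\mid\varphi[s_1/x_1,\ldots]\}}=\Lambda(\sem{\vec{x},x\csep\varphi[x]})\circ\vec{s}$. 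This is the step I expect to be the main (and essentially only) technical obstacle, because one has to check carefully that the context extension on the syntactic side matches the product extension $(\vec{s}\times\id)$ on the semantic side; this boils down to the coherence between treating $\Delta,x\vtp A$ as a context and $\sem{\Delta}\times\sem{A}$ as an object, together with the fact that interpretations of weakened terms are obtained by composing with the evident projection. Once these bookkeeping points are settled, the inductive case is formal, and the remaining cases combine to give the lemma.
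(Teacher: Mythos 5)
Your proof is correct and is exactly the standard structural induction the paper has in mind: the paper omits the proof entirely, remarking only that it is ``standard and consist[s] mainly of unfolding of definitions.'' Your identification of the comprehension case --- naturality of the transpose, $\Lambda(g\circ(h\times\id))=\Lambda(g)\circ h$, together with the weakening coherence $\sem{\Delta,x\csep s_i}=\sem{\Delta\csep s_i}\circ\pi$ --- as the only step beyond routine use of universal properties is accurate.
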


\begin{theorem}[Soundness theorem]
If a sequent $\Delta\csep p_1,\dots,p_n\vdash q$ is derivable in the
core
calculus, then we have
\begin{equation}
\sem{\Delta\csep p_1}^*\toptr\wedge\dots\wedge\sem{\Delta\csep
p_n}^*\toptr\vdash_{\sem{\Delta}}\sem{\Delta\csep q}^*\toptr\tag{$\ast$}
\end{equation}
in $\subc(\sem{\Delta})$. \qed
\end{theorem}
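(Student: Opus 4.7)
The plan is to proceed by induction on derivations in the core calculus, verifying that each rule is semantically sound. For a term $p$ of type $\Omega$ in context $\Delta$, write $\widehat{p} \coloneq \sem{\Delta\csep p}^*\toptr \in \subc(\sem{\Delta})$; the goal is to show that every derivable sequent $p_1,\dots,p_n\vdash q$ yields $\widehat{p_1}\wedge\cdots\wedge\widehat{p_n}\vdash_{\sem{\Delta}}\widehat{q}$ in the poset $\subc(\sem{\Delta})$.

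The structural rules (Ax) and (Cut) are immediate from the poset structure of the fibers. The rules ($=$R) and ($=$L) follow from the fact that $\chi_=$ classifies the diagonal $\delta_{\sem{A}}$, combined with Lemma~\ref{lem_substilem}: reflexivity unfolds to $\langle\sem{t},\sem{t}\rangle^*\delta_{\sem{A}} = \top$, while ($=$L) amounts to saying that on the subobject of $\sem{\Delta}$ where $\sem{s}$ and $\sem{t}$ coincide, one may freely substitute one for the other in $\varphi$. The $\beta$- and $\eta$-rules for products, the unit type, and power types reduce, again via the substitution lemma, to the universal properties of binary products, the terminal object, and the exponential transpose $\Lambda : \qcc(\sem{\Delta}\times\sem{A},\Omega) \cong \qcc(\sem{\Delta},\Omega^{\sem{A}})$ that comes from the representability of $\subc(-\times\sem{A})$.

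The only case requiring real input from the q-topos axioms is the extensionality rule (Ext). Here the induction hypotheses $\widehat{\Gamma}\wedge\widehat{p}\leq\widehat{q}$ and $\widehat{\Gamma}\wedge\widehat{q}\leq\widehat{p}$ together give $\widehat{\Gamma}\wedge\widehat{p} = \widehat{\Gamma}\wedge\widehat{q}$ as subobjects of $\sem{\Delta}$. Writing $m : W \ecocov \sem{\Delta}$ for a cocover representing $\widehat{\Gamma}$ and pulling back along $m$, this equality becomes $m^*\widehat{p} = m^*\widehat{q}$ in $\subc(W)$. The classifying maps of these two subobjects of $W$ are $\sem{p}\circ m$ and $\sem{q}\circ m$ respectively, so by the bijection $\subc(W)\cong\qcc(W,\Omega)$ we obtain $\sem{p}\circ m = \sem{q}\circ m$. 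Thus $\langle\sem{p},\sem{q}\rangle\circ m$ factors through the diagonal $\delta_\Omega$, which is precisely to say $m \leq \langle\sem{p},\sem{q}\rangle^*\chi_=^*\toptr = \widehat{p=q}$, as required.

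The main obstacle is organizational rather than conceptual: there are many rules, each demanding its own small diagrammatic unfolding. The core calculus has been set up so that every rule directly reflects a universal property guaranteed by Definition~\ref{def:q-topos} and Lemma~\ref{lem_cocov_reg}, so no single step is delicate once the substitution lemma is in hand.
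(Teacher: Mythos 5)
Your proposal is correct and takes precisely the route the paper intends: the paper states this theorem without proof, remarking that the argument is ``standard and consist[s] mainly of unfolding of definitions,'' and your induction on derivations, discharging each rule via the substitution lemma and the relevant universal property, is exactly that standard argument. Your detailed treatment of (Ext) --- pulling back along a cocover $m$ representing $\widehat{\Gamma}$ and using the bijection $\subc(W)\cong\qcc(W,\Omega)$ to pass from equality of subobjects to equality of the classifying maps $\sem{p}\circ m$ and $\sem{q}\circ m$ --- is sound and correctly identifies the one rule whose validity genuinely depends on the representability (genericity of $\toptr$) rather than on finite limits alone.
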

If the relation $(\ast)$ holds, we also say that \emph{the judgment
$\Delta\csep p_1,\dots,p_n\vdash q$ holds in $\qcc$}. 

The connectives of predicate logic can be encoded in the core calculus as
follows:
\begin{align*}
\top & \defcon \ast=\ast\\
p\wedge q &\defcon (p,q)=(\top,\top)\\
p\Rightarrow q &\defcon p\wedge q = p\\
\forall x\vtp A\qdot p[x] &\defcon \{x|p[x]\}=\{x|\top\}\\
\bot &\defcon \forall z\vtp\Omega\qdot z\\
p\vee q &\defcon \forall z\vtp\Omega\qdot (p\Rightarrow z)\wedge (q\Rightarrow
z)\Rightarrow z\\
\exists x\vtp A\qdot p[x] &\defcon \forall z\vtp\Omega\qdot(\forall x\vtp
A\qdot p[x]\Rightarrow z)\Rightarrow z
\end{align*}
It can now be derived from the rules of the core calculus that the such defined
logical connectives validate the rules of intuitionistic predicate logic. For a
proof of this, we refer to Lambek and Scott's book (alternatively, the reader
may prove this herself as an instructive exercise).

Here are some basic principles that can be transferred directly from toposes to
q-toposes:

\begin{lemma}\label{lem:int-lang}
\begin{enumerate}
\item\label{lem:int-lang:equal}
 Two parallel arrows $f,g:A\to B$ in $\qcc$ are equal if and only if
$\ilbracks{x\csep
\vdash f(x)=g(x)}$ holds in the internal logic.
\item\label{lem:int-lang:mono}
$f:A\to B$ is a monomorphism iff $f(x)=f(y)\vdash x=y$
holds in
$\qcc$.
\item\label{lem:int-lang:epi}
 $f:A\to B$ is an epimorphism iff $\;y\vtp B\!\csep\vdash\exists x\qdot f(x)=y$
holds
in $\qcc$.
\end{enumerate}
\end{lemma}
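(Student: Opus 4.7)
The plan is to unfold each of the three equivalences via the interpretation in $\subc(-)$ and recognize the resulting cocover-theoretic content. Recall that a sequent $\Delta\csep{}\vdash\varphi$ holds in $\qcc$ iff $\sem{\Delta\csep\varphi}^*\toptr$ is isomorphic to $\id_{\sem{\Delta}}$ as a cocover, and by the substitution lemma the interpretation of an equality $s=t$ reduces to the pullback of the diagonal $\delta$ along $\langle\sem{s},\sem{t}\rangle$.

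For part~(1), $\sem{x\csep f(x)=g(x)}^*\toptr$ is the pullback of $\delta_B$ along $\langle f,g\rangle:A\to B\times B$, which is the equalizer of $f$ and $g$ (a cocover, since $\delta_B$ is a split mono and cocovers are pullback-stable). This equalizer equals $\id_A$ as a subobject iff $\langle f,g\rangle$ factors through $\delta_B$, iff $f=g$. Part~(2) is analogous: $\sem{x,y\csep f(x)=f(y)}^*\toptr$ is the kernel pair $\rho:R\ecocov A\times A$ of $f$, and the diagonal $\delta_A$ is always contained in $\rho$, so the sequent $f(x)=f(y)\vdash x=y$ expresses the reverse containment $\rho\leq\delta_A$, which holds iff the two projections $R\rightrightarrows A$ coincide, i.e.\ iff $f$ is monic.

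Part~(3) is the substantive one. First I identify $\sem{y\csep\exists x\qdot f(x)=y}^*\toptr$ with $\exists_f\top_A\in\subc(B)$, the least cocover $\overline{f}:V\ecocov B$ through which $f$ factors, by unfolding $\exists_{\pi_B}$ of the graph $\Gamma_f\ecocov A\times B$ and using that $\Gamma_f$ is isomorphic (as an object of $\qcc$) to $A$ in a way that identifies $\pi_B|_{\Gamma_f}$ with $f$. For $(\Leftarrow)$, assume the sequent holds and suppose $gf=hf$; by part~(1) the equation internalizes to $g(f(x))=h(f(x))$, and combined with the hypothesis $\exists x\qdot f(x)=y$, intuitionistic predicate logic (validated by the encoded connectives) derives $g(y)=h(y)$, whence $g=h$ by part~(1). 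For $(\Rightarrow)$, assume $f$ epic and write $f=\overline{f}\circ g$ for the cocover-image factorization; then $\overline{f}$ is itself epic, and the right-orthogonality of the cocover $\overline{f}$ applied to the square with both vertical arrows equal to $\overline{f}$ and horizontal arrows identities yields an inverse, so $\overline{f}\cong\id_B$ and the sequent holds. The main subtle point is the identification of the second-order-encoded $\exists$ with the tripos left adjoint $\exists_f$, which is subsumed by the soundness of the core calculus for intuitionistic predicate logic once the encoded connectives have been verified to satisfy the standard rules.
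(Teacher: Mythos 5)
Your proposal is correct, and for parts~\ref{lem:int-lang:equal}, \ref{lem:int-lang:mono} and the $(\Leftarrow)$ direction of part~\ref{lem:int-lang:epi} it coincides with the paper's proof: interpretation of $f(x)=g(x)$ as the equalizer, kernel pair versus diagonal, and the internal-logic derivation of $h(y)=k(y)$ from $\exists x\qdot fx=y$ and $h(fx)=k(fx)$ are exactly the arguments given there. Where you genuinely diverge is the $(\Rightarrow)$ direction of part~\ref{lem:int-lang:epi}. The paper argues via the classifier: since $\Omega$ represents $\subc$, classifying maps are \emph{unique}, so the epimorphism $f$ equalizes $\chi_{\ilbracks{y\csep\top}}$ and $\chi_{\ilbracks{y\csep\exists x\qdot fx=y}}$ (both composites classify $\top_A$), forcing the two classifying maps, hence the two predicates, to be equal --- no orthogonality is needed at all. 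You instead construct the image cocover $\overline{f}\colon V\ecocov B$, observe that $f=\overline{f}g$ epi forces $\overline{f}$ epi, and invoke right orthogonality of cocovers against epimorphisms on the square with identity horizontals to conclude $\overline{f}$ is invertible; this is valid (an epi cocover is an iso by exactly that lifting), and it anticipates the epi/cocover factorization lemma the paper proves immediately afterwards. One point worth flagging for non-circularity: that lemma's proof in the paper \emph{uses} Lemma~\ref{lem:int-lang}.\ref{lem:int-lang:epi} (to show the first factor is epi), but you only use the minimality of $\overline{f}$ among cocovers through which $f$ factors, which is, as the paper notes, a rephrasing of the universal property of the encoded existential quantifier and is available from the Lambek--Scott soundness already established --- so your argument stands on its own. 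The trade-off: the paper's proof is shorter and exploits representability of $\subc$ directly, while yours isolates the reusable fact that epi plus cocover implies iso and does part of the work of the factorization system in passing.
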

\begin{proof}
\emph{Ad \ref{lem:int-lang:equal}.} Straightforward, since the interpretation
of $\ilbracks{x\csep f(x)=g(x)}$ is the equalizer of $f$ and $g$.

\emph{Ad \ref{lem:int-lang:mono}.} The interpretation of $\ilbracks{x,y\csep
f(x)=f(y)}$
is the kernel pair of $f$, and a morphism is monic iff its kernel pair is the
diagonal.

\emph{Ad \ref{lem:int-lang:epi}.} Assume that $\;y\vtp B\!\csep\vdash\exists
x\qdot f(x)=y$ holds. Let $h,k:B\to C$ be arbitrary and assume that $hf = kf$.
Then
the deduction
\def\fCenter{\vdash}
\begin{prooftree}
\AX$y\csep\fCenter\exists x\qdot fx =y$
\AX$x\csep\fCenter h(f(x))=k(f(x))$
\UI$x,y\csep fx = y \fCenter h(y)=k(y)$
\BI$y\csep\fCenter h(y)=k(y)$
\end{prooftree}
establishes the claim.

Conversely, assume that $f$ is an epimorphism. $f$ obviously equalizes the
classifying maps of the predicates $\ilbracks{y\vtp B\csep \top}$ and
$\ilbracks{y\vtp B\csep \exists x\qdot fx = y}$,
and since it is epic they are both equal, whence the second is valid in $\qcc$.
\end{proof}

Note that statement \ref{lem:int-lang:epi} stands in contrast to regular
categories, where the judgment $y\csep\vdash \exists x\qdot fx=y$
characterizes the \emph{regular} epimorphisms. This discrepancy is due to the
fact that the internal logic of a regular category has \emph{all} monomorphisms
as predicates, whereas we only consider the strong ones in q-toposes.

\medskip

In \cite{elephant1}, Johnstone shows that every quasitopos is coregular, i.e.,
its opposite category is regular. For q-toposes, this is too much to hope for,
since we do not even require all colimits. However, we can still prove one of
the most important consequences of coregularity --- the existence of a
epi/cocover factorization system.

\begin{lemma}
Let $f:A\to B$ in $\qcc$. The cocover $m:U\ecocov A$ that is classified by the
predicate $\ilbracks{y\csep \exists x\qdot f(x)=y}$ is the minimal cocover
through which
$f$ factors, and in the corresponding factorization $f=me$, $e$ is an
epimorphism.
In particular, the class 
of epimorphisms and the class 
of cocovers together form a factorization
system (see~\cite{freyd-kelly-cats_of_cont_functors,wiki:facsys}) on $\qcc$.
\end{lemma}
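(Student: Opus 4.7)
The plan is to work entirely in the internal language of $\qcc$, using Lemma~\ref{lem:int-lang} to translate between geometric statements about morphisms and logical statements about predicates. Throughout I will assume (correcting what appears to be a typo) that $m:U\ecocov B$.

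First I would construct the factorization. The predicate $\ilbracks{y\vtp B\csep\exists x\vtp A\qdot f(x)=y}$ is classified by the cocover $m:U\ecocov B$, which by definition of $\subc$ means that we have a pullback square whose top edge is an iso to $\ni$ and whose bottom edge gives $\chi_m$. Internally, $y\in U$ iff $\exists x\qdot f(x)=y$. Since trivially $x\vtp A\csep\vdash\exists x'\qdot f(x')=f(x)$, the judgment $x\vtp A\csep\vdash f(x)\in U$ holds in $\qcc$, and by the universal property of $m$ this yields a (unique) morphism $e:A\to U$ with $m\circ e=f$.

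Next I would show that $e$ is epic, using Lemma~\ref{lem:int-lang}.\ref{lem:int-lang:epi}: we must verify $u\vtp U\csep\vdash\exists x\vtp A\qdot e(x)=u$. Internally, given $u\vtp U$ we have $m(u)\in U$, hence $\exists x\qdot f(x)=m(u)$, i.e.\ $\exists x\qdot m(e(x))=m(u)$. Since $m$ is a monomorphism, Lemma~\ref{lem:int-lang}.\ref{lem:int-lang:mono} gives $m(e(x))=m(u)\vdash e(x)=u$, so $\exists x\qdot e(x)=u$ follows.

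For minimality, suppose $f=m'\circ e'$ for another cocover $m':U'\ecocov B$. Internally, $y\in U'$ iff $\exists u'\vtp U'\qdot m'(u')=y$ (using that $m'$ is monic so this is equivalent to the classifying predicate of $m'$). From $y\in U$ we get $\exists x\qdot f(x)=y$, hence $y=m'(e'(x))$, so $y\in U'$. Thus the judgment $y\vtp B\csep y\in U\vdash y\in U'$ holds, and this supplies the required factorization $U\ecocov U'\ecocov B$ of $m$ through $m'$.

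Finally, to conclude that (epi, cocover) is a factorization system, I would combine existence of factorizations (just shown) with orthogonality: cocovers are right orthogonal to all epimorphisms by definition, so for any commuting square with an epi on the left and a cocover on the right a unique diagonal filler exists. Closure under composition of each class, isomorphism-closure, and the standard uniqueness of factorizations then follow by general nonsense from the orthogonality condition (see~\cite{freyd-kelly-cats_of_cont_functors}). The main subtlety — and the only step requiring q-topos-specific reasoning — is the minimality argument, because one must be careful that "$y\in U'$" in the internal logic really corresponds to factoring through $m'$; this is where the identification of $m'$ with its classifying map via $\subc$, together with $m'$ being monic, is essential.
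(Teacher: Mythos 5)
Your proof is correct, and you are right that the statement's ``$m:U\ecocov A$'' is a typo for $m:U\ecocov B$. Overall you follow the same strategy as the paper --- internal-language arguments via Lemma~\ref{lem:int-lang} for the factorization and minimality, plus orthogonality and the retract argument for the factorization-system claim --- but two steps are discharged differently. For the epiness of $e$, the paper argues by contradiction: if $e$ were not epic it would factor through the proper cocover classified by $\ilbracks{y\csep\exists x\qdot e(x)=y}$, so $f$ would factor through a cocover strictly below $m$, contradicting minimality; you instead derive $u\csep\vdash\exists x\qdot e(x)=u$ directly from the valid judgment asserting $m(u)\in U$ together with the internal monicity of $m$, which is more elementary and, unlike the paper's version, does not make epiness logically dependent on minimality. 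Conversely, the paper compresses minimality into the one-line remark that it is ``a rephrasing of the universal property of existential quantification,'' which your explicit derivation ($y\in U\vdash\exists u'\qdot m'(u')=y\vdash y\in U'$, using the valid judgment $u'\csep\vdash m'(u')\in U'$ and the equality rule) correctly unpacks --- though note that your parenthetical appeal to monicity of $m'$ is not where the content lies: the direction you actually use holds for any cocover by substitution along $m'$. Finally, your deferral of the last step to ``general nonsense'' is exactly the paper's retract argument (a map left orthogonal to all cocovers is a retract of the epi part of its own factorization, and epimorphisms are closed under retracts); citing Freyd--Kelly for this is legitimate, where the paper merely spells it out.
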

\begin{proof}
The minimality condition is just a rephrasing of the universal property of
existential quantification.  If $e:A\to U$ were not an epimorphism, it would
factor through the monomorphism $m':U'\to U$ classified by $\ilbracks{y\csep
\exists
x\qdot e(x)=y}$, which would be non-maximal by the previous lemma. Then $f$
would factor through $mm'$, contradicting the minimality of $m$.

To establish that the epimorphisms together with the cocovers constitute a
factorization system, it now remains to show that the maps that are left
orthogonal to all cocovers are precisely the epimorphisms. This follows from
the \emph{retract argument}: Let $g: C\to D$ be a morphism that is left
orthogonal to all cocovers, and let $f=kh$ be its factorization into an
epimorphism followed by a cocover. We obtain a square
\[
 \xymatrix{
C\ar[d]_g\depi[r]^h & X\cocov[d]^k \\
D\ar[r]_\id\dashed[ru]_l & D
},
\]
which has a lifting $l$ because $g$ is left orthogonal to all cocovers. It is
now easy to see that $g$ is a retract of $h$ and this implies that $g$ is an
epimorphism, because epimorphisms are closed under retracts.
\end{proof}

It is remarkable that to prove the existence of the factorization system, we
need quite heavy machinery, in particular the higher order internal logic with
its polymorphically defined existential quantification. 

\begin{lemma}\label{lem:pullback}
\begin{enumerate}
 \item 
If the square $(\dagger)$ is a pullback in $\qcc$, then the judgments
$(i)$--$(iii)$ hold in the internal logic. (The converse is not true in
general.)
\[(\dagger)\vcenter{\xymatrix{
P \ar[r]^q\ar[d]_p & B\ar[d]^g \\
A\ar[r]_f & C
}}\quad
\begin{array}{ll@{}l@{}l}
(i)&\!z&\csep &\vdash f(pz)=g(qz)\\
(ii)&\!x,y &\csep fx=gy&\vdash\exists z\qdot (pz,qz)=(x,y)\\
(iii)&\!z,z'&\csep (pz,qz)=(pz',qz')&\vdash z=z'
\end{array}
\]
\item\label{lem:pullback:stable}
Epimorphisms are stable under pullback in a q-topos.
\end{enumerate}
\end{lemma}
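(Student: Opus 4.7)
The plan is to prove part 1 clause by clause, using the characterizations of equality, monicity and epicity of morphisms via the internal language provided by Lemma~\ref{lem:int-lang}, and then derive part 2 as a short consequence of 1$(ii)$.

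For $(i)$, commutativity $fp = gq$ of the pullback square, together with Lemma~\ref{lem:int-lang}.\ref{lem:int-lang:equal} applied to the two parallel composites $P \to C$, immediately yields the judgment. For $(iii)$, the universal property of the pullback says that a morphism into $P$ is uniquely determined by its two components into $A$ and $B$; equivalently, the pairing $\langle p,q\rangle:P\to A\times B$ is monic. Applying Lemma~\ref{lem:int-lang}.\ref{lem:int-lang:mono} to $\langle p,q\rangle$ gives precisely judgment $(iii)$ (after noting that $\langle p,q\rangle(z)=\langle p,q\rangle(z')$ in the internal logic is the same as $(pz,qz)=(pz',qz')$ by the standard product encoding).

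The main point is $(ii)$. The key observation is that in any finite limit category, the pullback square exhibits $\langle p,q\rangle:P\to A\times B$ as the equalizer of $f\pi_1$ and $g\pi_2:A\times B\to C$. As an equalizer, $\langle p,q\rangle$ is a regular monomorphism, hence a cocover by Lemma~\ref{lem_cocov_reg}.2. Now interpret the two sides of $(ii)$ as subobjects of $A\times B$: the predicate $\ilbracks{(x,y)\csep fx=gy}$ classifies the equalizer itself, i.e.\ the cocover $\langle p,q\rangle$, while the predicate $\ilbracks{(x,y)\csep \exists z\qdot (pz,qz)=(x,y)}$ classifies the image of $\langle p,q\rangle$ under the epi-cocover factorization (this is the content of the existential encoding together with Lemma~\ref{lem:int-lang}.\ref{lem:int-lang:epi} applied fiberwise). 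Since $\langle p,q\rangle$ is already a cocover, its epi-cocover factorization is trivial and its image is itself. Hence both subobjects coincide, giving the entailment. The only subtle step here, and the one I expect to require care, is the identification of the existential predicate with the image in the epi-cocover factorization system, which relies essentially on the existence of this factorization system established in the preceding lemma.

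For part~2, let $f:A\to B$ be an epimorphism and form the pullback $P\xrightarrow{p}C$ of $f$ along $g:C\to B$. Working in the internal logic, assume $c\vtp C$. By Lemma~\ref{lem:int-lang}.\ref{lem:int-lang:epi} applied to $f$ and the element $g(c)\vtp B$, we have $\exists a\vtp A\qdot f(a)=g(c)$. For any such $a$, part~$(ii)$ yields some $z\vtp P$ with $(pz,qz)=(c,a)$, in particular $pz=c$. Therefore $c\vtp C\csep\vdash\exists z\vtp P\qdot pz=c$ holds in $\qcc$, and Lemma~\ref{lem:int-lang}.\ref{lem:int-lang:epi} concludes that $p$ is an epimorphism.
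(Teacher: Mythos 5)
Your proof is correct and follows essentially the same route as the paper: the paper's proof of $(ii)$ likewise rests on the observation that both $\ilbracks{x,y\csep fx=gy}$ and $\ilbracks{x,y\csep\exists z\qdot (pz,qz)=(x,y)}$ classify the cocover $\langle p,q\rangle:P\ecocov A\times B$, with $(i)$ being commutativity, $(iii)$ joint monicity of $p,q$, and part~2 deduced from $(ii)$ together with the internal characterization of epimorphisms in Lemma~\ref{lem:int-lang}. You merely spell out the justification the paper leaves implicit --- the equalizer presentation of the pullback, the identification of regular monomorphisms with cocovers (Lemma~\ref{lem_cocov_reg}), and the image predicate from the epi-cocover factorization --- which is exactly how the paper's terse assertion is meant to be unpacked.
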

\begin{proof}
\emph{Ad 1.}
Assume that the square is a pullback. $(i)$ just states that the square
commutes. $(ii)$ holds, because $\ilbracks{x,y\csep fx=gy}$ as well as
$\ilbracks{x,y\csep\exists
z\qdot (pz,qz)=(x,y)}$ classify the cocover $P\ecocov A\times B$. $(iii)$
expresses the fact that $p$ and $q$ are jointly monic.

\emph{Ad 2.}
This follows from 1 and the characterization of epimorphisms in
Lemma~\ref{lem:int-lang}.\ref{lem:int-lang:epi}.
\end{proof}

\begin{lemma}\label{lem:fib-cocov-trip}
The fibration $\funs\qcc$ of cocovers on a q-topos $\qcc$ is a tripos.
\end{lemma}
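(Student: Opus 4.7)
The key idea is to exploit the bijection $\subc(A)\cong\qcc(A,\Omega)$ provided by the classifier $\toptr:U\ecocov\Omega$, and then transport structure from the internal logic pointwise. Soundness of intuitionistic higher-order logic (in the form established in the preceding subsection via the core calculus) guarantees that the propositional connectives $\wedge,\vee,\imp,\top,\bot$ on the generic truth value $\top:1\to\Omega$ make $\Omega$ into an internal Heyting algebra object of $\qcc$. Consequently each hom-set $\qcc(A,\Omega)$ carries a pointwise Heyting algebra structure, which transfers to $\subc(A)$; moreover, precomposition with $f:A\to B$ preserves all this structure, so reindexing is Heyting, giving Conditions 1 and 2 of Definition~\ref{def_tripos}.

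For Condition 3, I would define quantification along an arbitrary $f:A\to B$ in the internal language by
\[
(\exists_f\varphi)(b) \defcon \exists a\vtp A\qdot f(a)=b \wedge \varphi(a),
\qquad
(\forall_f\varphi)(b) \defcon \forall a\vtp A\qdot f(a)=b \imp \varphi(a),
\]
producing maps $\exists_f\varphi,\forall_f\varphi : B\to\Omega$ and hence cocovers into $B$. The fact that these are left and right adjoint to $f^*$ is a direct soundness-theorem translation of the standard adjointness rules $\exists a\qdot f(a)=b\wedge\varphi(a)\vdash\psi(b) \dashv\vdash \varphi(a)\vdash\psi(f(a))$ and the dual for $\forall$, using Lemma~\ref{lem:int-lang} to convert internal entailments into the external order on $\subc$.

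For the Beck-Chevalley condition along squares of the form \eqref{eq_bc_square}, the internal-logic descriptions of $\exists$ and $\forall$ reduce the equation to a tautology of higher-order intuitionistic logic, e.g.\ for $\exists$, one rewrites both sides as predicates in $\varphi$ using the formulas above and observes that they are logically equivalent; soundness then gives equality in $\subc$. For Condition 4, I would take $\tripower A\coloneq \Omega^A$, with generic element relation $\ni_A : \tripower A\times A\to\Omega$ being the evaluation $\ve_A$ (or rather, the cocover it classifies). Given $\varphi\in\subc(C\times A)$ with classifying map $\chi_\varphi : C\times A\to\Omega$, the assignment $\chi_A(\varphi)\coloneq \Lambda(\chi_\varphi) : C\to\Omega^A$ satisfies $\varphi=(\chi_A(\varphi)\times A)^*(\ni_A)$ by the defining property of the exponential transpose together with naturality of the bijection $\subc(-)\cong\qcc(-,\Omega)$.

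The main obstacle is really a bookkeeping one: making sure that the predicates defined in the internal language (which live in $\qcc(-,\Omega)$) correspond to genuine cocovers in $\qcc$, and that the internal proofs of adjointness and BC translate into the external statements about the fibration $\funs\qcc$. Everything else is an application of the soundness theorem together with the dictionary $\subc(A)\cong\qcc(A,\Omega)$.
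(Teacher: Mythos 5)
Your proposal is correct and takes essentially the same route as the paper's proof: the propositional structure comes from the internal language developed via the core calculus, quantification along an arbitrary $f$ is encoded by exactly the same formulas $\exists_f(\varphi)(y)\equiv\exists x\qdot f(x)=y\wedge\varphi(x)$ and $\forall_f(\varphi)(y)\equiv\forall x\qdot f(x)=y\imp\varphi(x)$, Beck--Chevalley is reduced to the Substitution Lemma plus internal-logic calculation, and the tripos power objects are given by the q-topos power objects $\Omega^A$ with the cocover classified by evaluation. The only difference is one of emphasis --- you spell out the transport of Heyting structure along $\subc(A)\cong\qcc(A,\Omega)$ and the adjointness verifications that the paper leaves implicit.
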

\begin{proof}
The internal language gives us the propositional structure and quantification
along projections. Quantification along arbitrary morphisms can be encoded as
follows. For $f:A\to B$ and $\varphi: A\to \Omega$, we set
\begin{align*}
\forall_f(\varphi)(y)&\defcon\forall x\vtp A\qdot f(x)=y\Rightarrow
\varphi(x)\qquad\text{and}\\
\exists_f(\varphi)(y)&\defcon\exists x\vtp A\qdot f(x)=y\wedge \varphi(x).
\end{align*}
The Beck-Chevalley condition for pullback squares of the form
\[\xymatrix{
A\times C\ar[r] \ar[d]_{f\times C} & A\ar[d]^f\\
B\times C\ar[r] & B
}\]
(where we quantify along the projections) follows directly from the
Substitution Lemma~\ref{lem_substilem}. For general pullback squares, it
follows from Lemma~\ref{lem:pullback} and some calculations in the internal
logic. Finally, it is easy to see that the power objects of the q-topos give us
power objects in the fibration in the tripos sense.
\end{proof}

\subsection{The subtopos of coarse objects}

In Lemma~\ref{lem:int-lang}, we saw that the internal logic of a q-topos is
powerful enough to detect equality of arrows. However, the internal logic lacks
another important feature: it is not capable to distinguish isomorphisms from
maps which are monomorphisms and epimorphisms at the same time. This follows
from the following lemma.
\begin{lemma}
 Let $f:A\emonepi B$ be monic as well as epic. Then the induced map
$f^*:(\funs\qcc)_B\to(\funs\qcc)_A$ 
we do
is an isomorphism of posets.
\end{lemma}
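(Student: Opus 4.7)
The plan is to translate the statement into an assertion about classifying maps into $\Omega$, where we can leverage the internal logic together with the epi/cocover factorization system established earlier.

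First I would use the identification of the fiber $(\funs\qcc)_X$ with $\qcc(X,\Omega)$, ordered by pointwise entailment in the internal logic, and the fact (immediate from the classifier property) that under this identification the reindexing $f^*:(\funs\qcc)_B\to(\funs\qcc)_A$ is just precomposition with $f$, sending $\chi:B\to\Omega$ to $\chi\circ f$. So the lemma reduces to showing that $(-\circ f):\qcc(B,\Omega)\to\qcc(A,\Omega)$ is an order-isomorphism.

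Next I would check that $(-\circ f)$ is a bijection of sets. Injectivity is immediate from $f$ being epic: if $\chi\circ f=\chi'\circ f$ then $\chi=\chi'$. For surjectivity, given $\phi:A\to\Omega$ with corresponding cocover $m:U\ecocov A$, form the epi/cocover factorization $U\eepi V\stackrel{n}{\ecocov} B$ of the composite $fm$, and let $\psi:B\to\Omega$ classify $n$. Using the description of the image in the internal logic, $\psi(y)\dashv\vdash\exists u\vtp U.\,fm(u)=y$. Substituting $y=f(x)$ gives $\psi(f(x))\dashv\vdash \exists u\vtp U.\,f(m(u))=f(x)$, and here the key step uses that $f$ is monic: Lemma~\ref{lem:int-lang}.\ref{lem:int-lang:mono} lets us replace $f(m(u))=f(x)$ by $m(u)=x$, so $\psi\circ f$ classifies the same cocover as $\phi$, whence $\psi\circ f=\phi$.

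Finally I would verify that $(-\circ f)$ preserves and reflects the order. Preservation is trivial: if $\chi\leq\chi'$ pointwise, then so is $\chi\circ f\leq\chi'\circ f$. For reflection, assume $\chi\circ f\leq\chi'\circ f$, i.e.\ $a\vtp A\csep\chi(fa)\vdash\chi'(fa)$. By Lemma~\ref{lem:int-lang}.\ref{lem:int-lang:epi}, since $f$ is epic we have $b\vtp B\csep\vdash\exists a\vtp A.\,f(a)=b$, and a short derivation in the internal logic (introduce $a$ with $f(a)=b$, apply the hypothesis, substitute back) gives $b\vtp B\csep\chi(b)\vdash\chi'(b)$, as required. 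The main obstacle, if any, is the surjectivity step, where one has to be careful that the image factorization from the preceding lemma is available and that the internal-logic identification of classifying maps with cocovers interacts correctly with pullback along $f$; everything else is bookkeeping.
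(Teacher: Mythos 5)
Your proof is correct and follows essentially the same route as the paper's: the map $\psi$ you obtain from the epi/cocover factorization of $fm$ is classified by $\ilbracks{b\csep\exists a\qdot fa=b\wedge\phi(a)}$, which is exactly the inverse (existential quantification along $f$) that the paper writes down directly, and both arguments turn on the same internal characterizations of monomorphisms and epimorphisms from Lemma~\ref{lem:int-lang}. The remaining differences (checking bijectivity plus order-preservation and reflection, rather than exhibiting two mutually inverse monotone maps) are purely presentational.
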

\begin{proof}
Using the internal language, the map $\subc(f)$ can be written as
\[
 (\funs\qcc)_B\ni\varphi\quad\mapsto\quad \ilbracks{a\csep \varphi(fa)}.
\]
A map in the converse direction is given by existential quantification:
\[
(\funs\qcc)_B\ni\psi\quad\mapsto\quad \ilbracks{b\csep\exists a\qdot fa=b\wedge
\psi(a)}
\]
Using the characterizations of monomorphisms and epimorphisms of
Lemma~\ref{lem:int-lang}, it is easy to verify that these two maps are inverse
to each other.
\end{proof}

So in a sense the arrows which are monic and epic at the same time disclose a
mismatch between the category and the internal logic. This can be seen as a
motivation for the following definition of \emph{coarse objects}, which are
just as blind as the internal logic, so that the correspondence between
category and internal logic is restored if we restrict to the full subcategory
on the coarse objects.

Coarse objects are also considered for quasitoposes, and the treatment here is
a variation of the presentation in \cite{elephant1} for quasitoposes.

\begin{definition}
An object $C$ of a q-topos $\qcc$ is called coarse, if  for each morphism
$f:A\emonepi B$ which is monic and epic at once and all morphisms $g:A\to C$,
there exists a morphism $h:B\to C$ such that $hf=g$.
\end{definition}

Because the arrow $f$ in the previous definition is an epimorphism, the
mediating arrow $h$ is automatically unique.

\begin{lemma}[Properties of coarse objects]\label{lem:coarse-objects}
\begin{enumerate}
\item\label{lem_coarse-objects_mono-epi-out}
If $C$ is coarse and $f:C\emonepi A$ is monic as well as epic, then it is
already an isomorphism.
\item \label{lem_coarse-objects_mono-out}
If $C$ is coarse and $m:C\emono A$ is a monomorphism, then $m$ is already a
cocover.
\item \label{lem_coarse-objects_subobj}
If $C$ is coarse and $m:U\ecocov C$ is a cocover, then $U$ is coarse.
\item \label{lem_coarse-objects_prods}
Finite products of coarse objects are coarse.
\item \label{lem_coarse-objects_pow-objs}
 For every object $A$ of $\qcc$, its power object $PA$ is coarse.
\item \label{lem_coarse-objects_reflective}
The full subcategory $\funt\qcc$ of  $\qcc$ on the coarse objects is
reflective.

(In the following we denote this reflection by $J_\qcc\dashv
I_\qcc:\funt\qcc\to\qcc$.)
\item \label{lem_coarse-objects_j-regular}
The reflection functor $J_\qcc:\qcc\to\funt\qcc$ preserves finite limits and
epimorphisms.
\item \label{lem_coarse-objects_topos}
$\funt\qcc$ is a topos.
\item \label{lem_coarse-objects_i-preserves-epis}
The embedding functor $I_\qcc:\funt\qcc\to \qcc$ maps epimorphisms to
epimorphisms.
\end{enumerate}
\end{lemma}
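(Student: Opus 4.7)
The plan is to prove the nine statements essentially in the order given, since each depends on the preceding ones, and then to identify the construction of the reflection as the central technical step on which the rest relies.

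For statement~\ref{lem_coarse-objects_mono-epi-out}, I would apply coarseness of $C$ to $f$ and $\id_C$ to obtain a retraction $h:A\to C$ with $hf=\id_C$; then $fhf=f=\id_A\circ f$ and epicness of $f$ force $fh=\id_A$. Statement~\ref{lem_coarse-objects_mono-out} follows by factoring $m:C\emono A$ through the epi/cocover factorization system as $m = m' e$ with $e:C\eepi C'$ epi and $m':C'\ecocov A$ cocover; the mono $m$ forces $e$ to be mono as well, so~\ref{lem_coarse-objects_mono-epi-out} makes $e$ an iso, and thus $m$ is a cocover. Statement~\ref{lem_coarse-objects_subobj}: given $f:X\emonepi Y$ mono+epi and $g:X\to U$, compose with the cocover to obtain $mg:X\to C$, extend uniquely by coarseness of $C$ to $\bar h:Y\to C$, and then use that cocovers are right orthogonal to epis (and in particular to $f$) applied to the commuting square with $mg$ and $\bar h$ to obtain the required lift $Y\to U$. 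Statement~\ref{lem_coarse-objects_prods} is immediate by pairing the liftings coordinatewise. Statement~\ref{lem_coarse-objects_pow-objs} is the first place where we really exploit the q-topos structure: for $f:X\emonepi Y$ mono+epi, the map $f\times A$ is also mono+epi (products of monos are mono and pullback-stability of epis, Lemma~\ref{lem:pullback}.\ref{lem:pullback:stable}, makes $f\times A$ epi), so by the lemma preceding this one $(f\times A)^*:\subc(Y\times A)\to\subc(X\times A)$ is an isomorphism; transporting the predicate classifying $g:X\to PA$ across this bijection yields the required classifying map $h:Y\to PA$ with $hf=g$.

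For statement~\ref{lem_coarse-objects_reflective} (the reflection), I would take the \emph{singleton map} $\{\cdot\}_X:X\to PX$ corresponding to $\eq_X\in\subc(X\times X)$; it is a monomorphism since it is injective in the internal logic. Factor it as $\{\cdot\}_X=m\circ e$ with $e:X\eepi J_\qcc(X)$ epi and $m:J_\qcc(X)\ecocov PX$ cocover; by~\ref{lem_coarse-objects_pow-objs} and~\ref{lem_coarse-objects_subobj}, $J_\qcc(X)$ is coarse, and monicity of $\{\cdot\}_X=me$ makes $e$ mono as well, so $e$ is mono+epi. Now for any $g:X\to C$ with $C$ coarse, coarseness of $C$ applied to $e$ produces a (unique, since $e$ is epi) morphism $\bar g:J_\qcc(X)\to C$ with $\bar ge=g$; this is the universal property and thus $e=\eta_X$.

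The main work is statement~\ref{lem_coarse-objects_j-regular}. Preservation of epis by a left adjoint is a general fact (transported via the adjunction bijection and epicness in the source). For finite limits, the key observation is that the unit $\eta_X$ is always mono+epi. For products, $\eta_X\times\eta_Y$ is again mono+epi (the mono part is automatic, the epi part uses pullback-stability of epis twice), and $J_\qcc(X)\times J_\qcc(Y)$ is coarse by~\ref{lem_coarse-objects_prods}, so the unit $X\times Y\xrightarrow{\eta_X\times\eta_Y}J_\qcc(X)\times J_\qcc(Y)$ exhibits the right-hand side as the reflection of $X\times Y$, and therefore $J_\qcc(X\times Y)\cong J_\qcc(X)\times J_\qcc(Y)$; $J_\qcc(1)\cong 1$ is trivial. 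For equalizers $E\emono X$ of $f,g:X\to Y$, I would show directly that the equalizer $E'$ of $J_\qcc(f), J_\qcc(g)$ in $\qcc$ is coarse (it is a mono into a coarse object, hence a cocover by~\ref{lem_coarse-objects_mono-out}, hence coarse by~\ref{lem_coarse-objects_subobj}), and verify its universal property using that $\eta_X$ is mono (so that equations pulled back along $\eta_Y$ can be transported back). This is the step I expect to be the main obstacle, essentially because one has to check a universal property spanning two categories rather than appeal to an abstract lemma.

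Finally, for~\ref{lem_coarse-objects_topos}, I would note that $\funt\qcc$ has finite limits by~\ref{lem_coarse-objects_j-regular} (or directly by the closure arguments of the previous paragraph), and that for $A\in\funt\qcc$ the object $PA$ serves as a power object in $\funt\qcc$: by~\ref{lem_coarse-objects_mono-out} every mono into a coarse object $X\times A$ is a cocover, and such cocovers correspond to maps $X\to PA$ via the classifying map in $\qcc$, which by~\ref{lem_coarse-objects_pow-objs} lives in $\funt\qcc$. Since $\funt\qcc$ then has finite limits and power objects, it is a topos. Statement~\ref{lem_coarse-objects_i-preserves-epis} is the short coda: if $f:C\to D$ is epi in $\funt\qcc$ and $u,v:D\to Z$ in $\qcc$ agree after precomposition with $f$, then $\eta_Zu$ and $\eta_Zv$ agree after precomposition with $f$, hence are equal by epicness in $\funt\qcc$ (since $J_\qcc(Z)$ is coarse), and monicity of $\eta_Z$ then forces $u=v$.
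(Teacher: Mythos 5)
Your proposal is correct, and for items 1--4, 6 and 8 it coincides with the paper's proof step for step (same retraction argument, same epi/cocover factorization of the mono, same orthogonality square, same singleton map $\ilbracks{x\csep\{y\mid y=x\}}:X\to PX$ factored as mono-epi followed by cocover, same product argument via $\eta_X\times\eta_Y=(\eta_X\times\id)\circ(\id\times\eta_Y)$ and pullback-stability of epis). Three local divergences are worth recording. For item 5 the paper simply writes down the lift in the internal language, as $\ilbracks{b\csep\{d\mid\exists a\qdot f(a)=b\wedge d\in g(a)\}}$, whereas you route through the lemma preceding the definition of coarse objects (that $f^*$ is an isomorphism on cocover lattices for $f$ mono-epi) applied to $f\times A$; these are the same computation, since the inverse of $(f\times A)^*$ is exactly existential quantification, but your phrasing makes the representability argument explicit and is arguably more conceptual. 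For the equalizer step of item 7, the paper directly verifies by a diagram chase (pulling a test map back along $\eta_A$) that $\crefl{m}:\crefl{U}\to\crefl{A}$ equalizes $\crefl{f},\crefl{g}$, while you instead show the $\qcc$-equalizer $E'$ of $J_\qcc f,J_\qcc g$ is coarse (via items 2 and 3) and exhibits the reflection of $E$; this works and is cleaner, but the one point your sketch leaves implicit --- that the comparison $E\to E'$ is \emph{epic} --- should be spelled out: pulling $E'\ecocov J_\qcc X$ back along $\eta_X$ yields $E$ itself (this is where monicity of $\eta_Y$ enters), so the comparison is a pullback of the epi $\eta_X$ and is epic by Lemma~\ref{lem:pullback}.\ref{lem:pullback:stable}. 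Finally, item 9 is where you genuinely depart from the paper: the paper factors an epi $e$ of $\funt\qcc$ as epi-cocover in $\qcc$, notes the middle object is coarse, and invokes balancedness of the topos $\funt\qcc$ to kill the cocover part; your argument --- postcompose a disagreeing pair $u,v:D\to Z$ with the mono $\eta_Z$ into the coarse object $J_\qcc Z$ and use epicness in the full subcategory --- is more elementary, avoiding both the factorization system and the fact that $\funt\qcc$ is balanced (hence it does not even need item 8), at the price of using monicity of the unit, which you have already established anyway in item 6.
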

\begin{proof}
\emph{Ad \ref{lem_coarse-objects_mono-epi-out}.} By coarseness of $C$, there
exists an arrow $g:A\to C$ such that $gf=\id_C$. Because $f$ is an epimorphism,
it follows that $fg=\id_{A}$.

\smallskip
\emph{Ad \ref{lem_coarse-objects_mono-out}.} Let $\tilde{m}e=m$ be the
factorization of $e$ into an epimorphism and a cocover. Then $e$ is a
monomorphism as well as an epimorphism, and by \emph{1.}\ it is already an
isomorphism.

\smallskip
\emph{Ad \ref{lem_coarse-objects_subobj}.} Let $f:A\to B$ be monic and epic,
and $g:A\to U$. Then by coarseness of $C$, there exists a  map $h:B\to C$ with
$hf=mg$, and by orthogonality, there exists a map $k:B\to U$ such that $kf=g$
and $mk=h$.
\[\xymatrix{
A\monepi[d]_f\ar[r]^g & U\cocov[d]^m \\
B\dashed[r]_h\dashed[ru]_k & C 
}\]

\smallskip
\emph{Ad \ref{lem_coarse-objects_prods}.} To extend an arrow $\langle
g_1,\dots,g_n\rangle:A\to C_1\times\dots\times C_n$ along a monic-epic arrow
$f:A\emonepi B$, simply extend the components $g_i$ individually.

\smallskip
\emph{Ad \ref{lem_coarse-objects_pow-objs}.} Let $f:A\emonepi B$ be monic-epic
and let $g:A\to PD$. The lifting of $g$ along $f$ can be elegantly expressed as
$\ilbracks{b\csep\{d|\exists a\qdot f(a)=b\wedge d\in g(a)\}}$. To see that
this
validates the required equation, simply substitute and simplify.

\smallskip
\emph{Ad \ref{lem_coarse-objects_reflective}.} We have to give for each $A$ a
coarse object $\crefl{A}=J_\qcc(A)$ (we use the notations $\crefl{A}$ and
$J_\qcc A$ interchangeably) and a morphism $\eta_A:A\to \crefl{A}$ such that
for all coarse $C$ and all $f:A\to C$ there exists a unique $g:\crefl{A}\to C$
with $g\eta_A = f$.

Consider the morphism $\ilbracks{x\csep \{y|y=x\}}:A\to PA$. Using
Lemma~\ref{lem:int-lang}.\ref{lem:int-lang:mono}, we deduce that it is a
monomorphism, and construct its epi/cocover factorization
$A\stackrel{\eta_A}{\emonepi} \crefl{A}\ecocov PA$. It follows
from~\ref{lem_coarse-objects_subobj} and~\ref{lem_coarse-objects_pow-objs} 
that $\crefl{A}$ is coarse, and the required
universal property of $\eta_A$ follows directly from the fact that it is a
mono-epi.

\smallskip
\emph{Ad \ref{lem_coarse-objects_j-regular}.}
Epimorphisms are preserved because they can be characterized in terms of
colimits, and $J$ is a left adjoint. 

For the finite limits, we show that $J$ preserves the terminal object, binary
products and equalizers.

We have already seen that the terminal object is coarse, hence $\crefl{1}=1$,
i.e., the terminal object is preserved.

For products, we know from \ref{lem_coarse-objects_prods} that
$\crefl{A}\times\crefl{B}$ is coarse  and thus a product of $\crefl{A}$ and
$\crefl{B}$ in $\funt\qcc$. Hence we have to show that $\crefl{A\times
B}\cong\crefl{A}\times\crefl{B}$. To show this, it is sufficient to find a
monic-epic arrow $\eta'_{A\times B}:A\times B\emonepi
\crefl{A}\times\crefl{B}$, since  this makes $\crefl{A}\times\crefl{B}$ into a
coarse hull of $A\times B$ having the universal property that was described in
\ref{lem_coarse-objects_reflective}.
The obvious candidate arrow is the product $\eta_A\times\eta_B$ of the
universal arrows for $A$ and $B$. It can be decomposed as $\eta_A\times \eta_B
= \eta_A\times \id_B\circ \id_A\times \eta_B$, and $\eta_A\times\id_B$ and
$\id_A\times \eta_B$ are monic and epic since they are pullbacks of $\eta_A$
and $\eta_B$, respectively, and epimorphisms are pullback stable by
Lemma~\ref{lem:pullback}.\ref{lem:pullback:stable}.

Finally, consider a pair $f,g:A\to B$ of parallel arrows, with equalizer
$m:U\ecocov A$. We want to show that $\crefl{m}$ equalizes $\crefl{f}$ and
$\crefl{g}$. Let $h:X\to A$ such that $\crefl{g}h=\crefl{f}h$, form the
pullback $k$ of $h$ along $\eta_A$, and consider the diagram
\[
 \vcenter{\xymatrix@R-1mm{
X\ar[rrd]^h\dashed[rd]_w \\
& \crefl{U}\mono[r]_{\crefl{m}} &\crefl{A}\ppair{r}{\crefl{f}}{\crefl{g}} &
\crefl{B}\\
& U\mono[r]^m\monepi[u]^{\eta_U} & A\ppair{r}{f}{g}\monepi[u]^{\eta_A} &
B\monepi[u]_{\eta_B}\\
Y\ar[rru]_k\monepi[uuu]^l\dashed[ru]^v
}}
\]
where $l$ is the pullback of $\eta_A$ along $h$ (and thus monic and epic).
We can derive in the internal logic that $\crefl{m}$ is a monomorphism from the
facts that $\crefl{m}\eta_U$ is a monomorphism and $\eta_U$ is an epimorphism.
Furthermore we have $\eta_B fk = \crefl{f} h l = \crefl{g} h l = \eta_B g k$,
and because $\eta_B$ is a monomorphism, this implies $gk = fk$. This induces a
mediating arrow $v:Y\to U$, and by coarseness we can lift $\eta_U v$ along $l$
to obtain $w$. Doing some arrow chasing we conclude $\crefl{m}w=h$, and $w$ is
the unique such because $\crefl{m}$ is a monomorphism. Hence, $\crefl{m}$ is
indeed an equalizer of $\crefl{f}$ and $\crefl{g}$. 

\smallskip

\emph{Ad \ref{lem_coarse-objects_topos}.} It follows from
\ref{lem_coarse-objects_subobj} and \ref{lem_coarse-objects_prods} that
$\funt\qcc$ has finite limits, because equalizers are cocovers. The powersets
are coarse by \ref{lem_coarse-objects_pow-objs}, and from
\ref{lem_coarse-objects_mono-out} and \ref{lem_coarse-objects_subobj} it
follows that the maps that are classified by arrows of type $A\to PB$ are
precisely the monomorphisms $m:U\emono A\times B$ with coarse $U$.

\smallskip
\emph{Ad \ref{lem_coarse-objects_i-preserves-epis}.}
Let $e:A\to B$ be an epimorphism in $\funt\qcc$. We take its epi-cocover
factorization $A\eepi D\ecocov C$ in $\qcc$. Being a regular subobject of a
coarse object, $D$ is also coarse, and thus the factorization is also an
epi-mono factorization in $\funt\qcc$. Since $e$ is an epimorphism and
$\funt\qcc$ is balanced (as a topos), the map $D\ecocov C$ is an isomorphism.
Thus, $e$ is an epimorphism in $\qcc$.
\end{proof}

To conclude the section, we explain in which way we want to view q-toposes as a
pre-equipment.
\begin{definition}[The pre-equipment $\catqtop$]\label{def:dccat-qtop}
The underlying 2-category of $\catqtop$ consists of q-toposes, finite limit
preserving functors and arbitrary transformations. 

The \regular\ 1-cells are the functors that preserve epimorphisms and regular
epimorphisms.
\end{definition}
The pre-equipment $\catqtop$ is clearly \tworeg, since epimorphisms as well as
regular epimorphisms may be characterized in terms of colimits, and these are
preserved by left adjoints.

The attentive reader may have noticed that the definition of regular
1-cells 	does not mention
cocovers. Should we not demand that they are also preserved? --- It turns out
that this comes for free, because we proved in~\ref{lem_cocov_reg} that the
cocovers coincide with the regular monomorphisms, and those are preserved by
any finite limit preserving functor.

\section{The \tttc}\label{sec_tttc}

To each topos, we can associate a tripos --- its \emph{subobject fibration} ---
in a 2-functorial manner. The present section is dedicated to proving that the
such defined 2-functor has a \emph{special left biadjoint}. In
\cite{master-thesis}, I gave a very technical direct proof
of this statement. Later I found a substantial simplification, which consists
in decomposing the forgetful functor into two steps, the intermediate stage
being q-toposes. We will now see how this allows a simple description of
the special left biadjoints.

Sections \ref{suse:adj-f-s} and \ref{suse:adj-t-u} are devoted to the
description of the special biadjunctions between triposes and q-toposes, and
between q-toposes and toposes, respectively.

To simplify notation, we will in the  following use the variables $\eta$ and
$\ve$ in an ambiguous sense, denoting unit and counit of whatever adjunction is
at hand. We will use the convention that boldface $\pseta,\psve$ denote unit
and counit of 2-dimensional adjunctions, i.e., the special biadjunctions in our
case, whereas we use normal $\eta,\ve$ for 1-dimensional adjunctions.

\subsection{The biadjunction \texorpdfstring{$\funf\dashv\funs$}{F,S}
between\\ triposes and q-toposes}
\label{suse:adj-f-s}

In this section, we define a special biadjunction
\[
 (\funf\dashv\funs:\catqtop\to\cattrip,\pseta,\psve,\nu,\mu)
\]
between the pre-equipments of triposes and q-toposes.

The definitions and verifications of well-definedness of the constituents are a
bit long-winded, therefore we devote a individual subsection to each of them.

\subsubsection{The special right biadjoint functor
\texorpdfstring{$\funs$}{S}}

The forgetful functor
\[
\funs:\catqtop\to\cattrip
\]
assigns to each q-topos $\qcc$ its fibration
$\funs\qcc=\partial_1:\funcoc(\qcc)\to\qcc$ of cocovers. We defined this
fibration in \eqref{eq_fib_cc}  and proved in Lemma~\ref{lem:fib-cocov-trip}
that it is indeed a tripos. 

To a finite limit preserving functor $F:\qcc\to\qcd$, $\funs$ assigns the
tripos transformation
\[
\vcenter{\xymatrix@C+5mm{
\funcoc(\qcc) \ar[r]^{\funcoc(F)} \ar[d]_{\partial_1} &
\funcoc(\qcd) \ar[d]^{\partial_1} \\
\qcc\ar[r]_F & 
\qcd
}}.
\]
Here, $\funcoc(F)$ denotes the functor that maps the cocover $m:U\ecocov A$ to
$Fm:FU\ecocov FA$, which is again a cocover by the remark after
Definition~\ref{def:dccat-qtop}. 

To verify that this indeed defines a tripos morphism, we have to check the four
conditions in Definition~\ref{def:tripos-morphism}. Clearly the square
commutes, and cartesian arrows are preserved by $\funcoc(F)$, because they are
just pullback squares. Finite limits are preserved by $F$ by assumption, and
fiberwise finite meets are preserved again because they are given by pullbacks.

The action of $\funs$ on 2-cells is easy again, a natural transformation
$\eta:F\to G:\qcc\to \qcd$ is just mapped to itself and we leave it to the
reader to verify that $\eta$ constitutes a tripos transformation from
$(F,\funcoc(F))$ to $(G,\funcoc(G))$.

It is straightforward to verify that the previously described constructions all
commute with compositions and identities on the nose and thus $F$ is a
strict functor. We want it to be a special functor, and thus we have to check
that it maps \regular\ 1-cells in $\catqtop$ to \regular\ 1-cells in
$\cattrip$,
i.e., that $\funcoc(F)$ preserves cocartesian arrows whenever $F$ preserves
epimorphisms and regular epimorphisms. But cocartesian arrows in
$\funcoc(C)$ and $\funcoc(D)$ are just
squares
\[
\cxymatrix{
U\depi[r]^h\cocov[d]_m & V\cocov[d]^n \\
A\ar[r]^f & B
}
\]
where $m,n$ are cocovers and $h$ is an epimorphism, which are all preserved by
$F$.

This completes the description of $\funs$.

\subsubsection{The special left biadjoint functor \texorpdfstring{$\funf$}{F}}

The construction of the left biadjoint functor
\[
\funf:\cattrip\to\catqtop
\]
from triposes to q-toposes is more involved than that of $\funs$, which is to
be expected because it is a kind of `free' construction.

The object part of $\funf$ is close to --- but not quite\footnote{The
construction here has a more restrictive notion of morphism than the classical
\tttc, and in particular does not produce toposes but only q-toposes.}
 --- what is traditionally known as the `\tttc' (i.e. the construction of the
category $\flc[\trip]$ from a tripos $\trip$, as described in
\cite{hjp80,pitts81}). Concretely, the category $\funf\trip$ for a tripos
$\trip:\flx\to\flc$ is given as follows.
\begin{definition}[The category $\funf\trip$]\label{def:funq-trip} 
Let $\trip:\flx\to\flc$ be a tripos.
\begin{itemize}
 \item The objects of $\funf\trip$ are pairs $(C,\rho)$, where
$C\in\objects{\flc}$ and $\rho\in\trip_{C\times C}$ is a \emph{partial
equivalence relation} (i.e., symmetric and transitive) in the logic of $\trip$.
\item Morphisms $f:(C,\rho)\to (D,\sigma)$ in $\funf\trip$ are given by 
morphisms $f:C\to D$ in $\flc$ which satisfy
\[
 x,y\csep \rho(x,y)\vdash\sigma(fx,fy)
\]
in the logic of $\trip$.
Two such morphisms $f,g:C\to D$ are identified as morphisms from $(C,\rho)$ to
$(D,\sigma)$ if
\[
x\csep \rho(x,x)\vdash \sigma(fx,gx)
\]
holds. (More concisely, we can define $\funf\trip(\cro,\dsi)=\flc(C,D)/\sim$,
where $\sim$ is an (external) partial equivalence relation defined by $f\sim g$
iff $\rho(x,y)\vdash\sigma(fx,gy)$.)
\item
Composition and identities are inherited from $\flc$.
\end{itemize}
\end{definition}

It is straightforward to see that $\funf\trip$ is a well defined category. 
In Lemma~\ref{lem:fp-qtop}, we will prove that it is even always a q-topos, but before we introduce some terminology.

\begin{definition}
Let $\trip:\flx\to\flc$ be a tripos.
\begin{enumerate}
 \item Let $f:C\to D$ and let $\rho\in\trip_{C\times C},\sigma\in\trip_{D\times
D}$ be partial equivalence relations.
If the judgment 
\[
 x,y\csep \rho(x,y)\vdash\sigma(fx,fy)
\]
holds (i.e., if $f$ represents a morphism from $(C,\rho)$ to $(D,\sigma)$ in
$\funt\trip$), we say that $f$ is \emph{well defined with respect to $\rho$ and
$\sigma$}.

\item
Let $(C,\rho)$ be an object in $\bftr$. We call a predicate $\varphi\in\trip_C$
\emph{compatible with $\rho$}, if the judgments
\[x\csep \varphi(x)\vdash\rho(x,x)\qtext{and}x,y\csep
\varphi(x),\rho(x,y)\vdash\varphi(y)\]
hold in $\trip$ (intuitively, this means that $\varphi$ is a union of
equivalence classes of $\rho$). In Pitts'~\cite{pitts81} terminology, a
predicate on $C$ compatible with $\rho$ is precisely a strict unary relation on
$(C,\rho)$.
\end{enumerate}
\end{definition}

\begin{lemma}\label{lem:fp-qtop}
Let $\trip:\flx\to\flc$ be a tripos.
\begin{enumerate}
\item\label{lem:fp-qtop:flims}
The category $\bftr$ has finite limits.
\item  \label{lem:fp-qtop:omega}
The formula
\[
 \pereqv(p,q)\defpred \triptr(p)\leftrightarrow\triptr(q)
\]
defines a partial equivalence relation on $\prop\in\objects{\flc}$.
\item \label{lem:fp-qtop:epis}
$e:(C,\rho)\to (D,\sigma)$ is an epimorphism in $\funf\trip$ iff 
\[
y\csep \sigma(y,y)\vdash\exists x\qdot \rho(x,x)\wedge\sigma(ex,y)
\]
holds in $\trip$.
 \item \label{lem:fp-qtop:compat-to-sub}
If $\varphi\in\trip_D$ is a predicate which is compatible with $\sigma$, then
\[
 \sigma|_\varphi(x,y)\defpred\sigma(x,y)\wedge\varphi(x)
\]
is a partial equivalence relation on $D$, and
\[
 \id:(D,\sigma|_\varphi)\ecocov(D,\sigma)
\]
is a cocover in $\funf\trip$.
\item \label{lem:fp-qtop:mor-to-compat}
For any morphism $f:(C,\rho)\to (D,\sigma)$ in $\funf\trip$, the predicate
\[
 \psi(y)\defpred \exists x\qdot \rho(x,x)\wedge\sigma(fx,y)
\]
on $D$ is compatible with $\sigma$. Furthermore, $f:(C,\rho)\to (D,\sigma)$
factors through $(D,\sigma|_\psi)\ecocov(D,\sigma)$, giving rise to a
epi-cocover factorization.
\item\label{lem:fp-qtop:compat-sub-bij}
The assignments
\begin{align*}
 \varphi &\quad\mapsto\quad\bigl((C,\rho|_\varphi)\ecocov (C,\rho)\bigr)\\
 \bigl((U,\nu)\ecocov(C,\rho)\bigr)&\quad\mapsto\quad \ilbracks{y\csep \exists
x\qdot
\nu(x,x)\wedge\rho(mx,y)}
\end{align*}
constitute a bijection between the isomorphism classes of cocovers with
codomain $(C,\rho)$ and predicates on $C$ compatible
with $\rho$.
\item\label{lem:fp-qtop:reindexing}
The bijection established in the previous item provides a more convenient
characterization of the fibration of cocovers on $\bftr$. In this
representation, the reindexing of a predicate $\varphi\in\trip_D$ that is
compatible with $\sigma$ along $f:(C,\rho)\to(D,\sigma)$ is given by

\[
x\csep\rho(x,x)\wedge\varphi(fx)\;.
\]
\item\label{lem:fp-qtop:eff-quots}
$\bftr$ has effective quotients of strong equivalence relations, and up to
postcomposition by isomorphism the regular epimorphisms are precisely the
morphisms of the form $\id:(D,\sigma)\to(D,\tau)$ with
$\tau(x,x)\vdash\sigma(x,x)$.
\item\label{lem:fp-qtop:reg-pullback}
Regular epimorphisms in $\bftr$ are stable under pullback.
\item\label{lem:fp-qtop:pow-objs}
The presheaves
\[
 \subc(-\times (C,\rho)) : (\bftr)^\op\to\catset
\]
are representable.
\item\label{lem:fp-qtop:q-top}
$\bftr$ is a q-topos.
\end{enumerate}
\end{lemma}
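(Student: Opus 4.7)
The plan is to reason throughout in the internal language of $\trip$ developed in the previous section, which reduces nearly all parts of the lemma to routine verifications of syntactic judgements; this is the same strategy used by Pitts and by Hyland--Johnstone--Pitts for the original tripos-to-topos construction. The items are strongly interdependent, so I would not prove them strictly in order.

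For items (1) and (2), the constructions are explicit: the terminal object is $(1,\top)$, the product of $(C,\rho)$ and $(D,\sigma)$ is $(C\times D,\rho\times\sigma)$ with the componentwise PER, and the equalizer of $f,g\colon(C,\rho)\to(D,\sigma)$ is the restriction of $\rho$ by the compatible predicate $\ilbracks{x\csep \rho(x,x)\wedge\sigma(fx,gx)}$ via the construction of (4). Part (2) is immediate from the external symmetry and transitivity of $\triptr(-)\leftrightarrow\triptr(-)$.

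The core of the proof is the block (3)--(7). I would first dispatch (4) directly: symmetry and transitivity of $\sigma|_\varphi$ come from the compatibility conditions on $\varphi$, and similarly in (5) the predicate $\psi$ is compatible with $\sigma$ by direct internal-logic calculation. I would then prove (3) by showing that the stated condition is exactly what is needed for two parallel maps $h,k\colon(D,\sigma)\to(X,\tau)$ with $he=ke$ to agree as morphisms in $\bftr$, via the substitution lemma. With (3) in hand, the cocover property claimed in (4) follows from an orthogonality argument against epis described by (3), and the map $e$ in (5) is epi by (3), giving the factorization. Part (6) then becomes the verification that the two stated assignments are mutually inverse up to logical equivalence of compatible predicates: one direction is tautological, the other uses internal extensionality against the pointwise definition of the direct image. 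Part (7) is a direct computation using the substitution lemma once (6) is established.

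For the quotient and power structure, I would use (6) to identify a strong equivalence relation on $(C,\rho)$ with a compatible predicate $\tau\in\trip_{C\times C}$ which, by internal-logic arguments using reflexivity/symmetry/transitivity of the relation, can be taken to be a PER refining $\rho$; then $\id\colon(C,\rho)\to(C,\tau)$ is the coequalizer and its kernel pair is as required, giving (8) and the stated description of regular epimorphisms. Pullback stability (9) then reduces to a substitution-lemma calculation together with (7). The main obstacle is the weak power object construction in (10): I propose to represent $\subc(-\times(C,\rho))$ by $(\tripower C,\approx)$ with
\[
\approx(p,q)\idefpred \forall x,x'\qdot \rho(x,x')\Rightarrow(x\in p\leftrightarrow x'\in q),
\]
the strictness requirement $x\in p\vdash\rho(x,x)$ being absorbed into the partial reflexivity $\approx(p,p)$, together with the generic membership predicate $\ilbracks{(p,x)\csep x\in p}$; for a cocover of $(D,\sigma)\times(C,\rho)$, which by (6) and the product description of (1) corresponds to a predicate $\varphi\in\trip_{D\times C}$ compatible with the product PER, the classifying map is $\chi_C(\varphi)$ coming from the weak power structure of $\trip$, and well-definedness with respect to $\sigma$ and $\approx$, together with the reindexing equation, follows by a direct application of the substitution lemma and the defining property of $\chi_C$. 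Finally, (11) is assembled by combining (1), (8), (9) and (10) with Definition~\ref{def:q-topos}.
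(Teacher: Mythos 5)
Your overall route coincides with the paper's: the same explicit finite-limit constructions, the same internal-language characterization of epimorphisms, the restriction $\sigma|_\varphi$ for cocovers with the orthogonality argument against the epis of (3), the bijection with compatible predicates, quotient maps of the form $\id:(C,\rho)\to(C,\tau)$, and the normalized-presentation computation for pullback stability are all exactly what the paper does, so items (1)--(9) and (11) are fine. One under-specified point: in (3), saying the condition is ``exactly what is needed'' for maps equalized by $e$ to agree only yields \emph{sufficiency}; for necessity (epi $\Rightarrow$ the judgment) the paper classifies the predicates $\top$ and $\exists x\qdot\rho(x,x)\wedge\sigma(ex,y)$ by two morphisms into $(\prop,\pereqv)$ and uses that $e$ equalizes them --- this cogenerator-style step is the actual purpose of item (2), and your sketch should make it explicit, since without some object like $(\prop,\pereqv)$ there is no way to refute epi-ness.

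The genuine error is in (10). Your claim that the strictness requirement $x\in p\vdash\rho(x,x)$ is ``absorbed into the partial reflexivity $\approx(p,p)$'' is false: $\approx(p,p)$ unfolds to $\forall x,x'\qdot\rho(x,x')\Rightarrow(x\in p\leftrightarrow x'\in p)$, which expresses only saturation of $p$ under $\rho$. For instance $p=\{x\mid\top\}$ satisfies $\approx(p,p)$ for every partial equivalence relation $\rho$, while $x\in p\vdash\rho(x,x)$ fails whenever $\rho$ is not reflexive. Consequently your proposed generic membership predicate $\ilbracks{(p,x)\csep x\in p}$ is not strict in either variable, hence by the correspondence of item (6) it is not a predicate on $(\tripower C,\approx)\times(C,\rho)$ at all, and the representation fails as stated. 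The repair is easy and brings you back to the paper: take instead $\ilbracks{(p,x)\csep \approx(p,p)\wedge\rho(x,x)\wedge x\in p}$, exactly parallel to the paper's element predicate $x\in_\rho m\idefpred x\in_C m\wedge(\tripower\rho)(m,m)$, in which strictness and saturation occur as explicit conjuncts of $(\tripower\rho)(m,m)$. With that correction your choice of PER does work --- $(\tripower C,\approx)$ is isomorphic to the paper's $(\tripower C,\tripower\rho)$, via $\id$ in one direction (since $\tripower\rho(m,n)\vdash\approx(m,n)$) and the strictification $p\mapsto\{x\mid\rho(x,x)\wedge x\in p\}$ in the other --- and the remaining verifications (classifying maps via $\chi_C$ of $\trip$, well-definedness and the reindexing equation via the substitution lemma and item (7)) proceed exactly as in the paper.
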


\begin{proof}
\emph{Ad \ref{lem:fp-qtop:flims}.} 
Binary products of $(C,\rho)$ and $(D,\sigma)$ are given by $(C\times
D,\rho\perprod\sigma)$, where $\rho\perprod\sigma$ is defined as%
\[
(\rho\perprod\sigma)(c,d,c',d')\defpred
\rho(c,c')\wedge\sigma(d,d').
\]

$(1,\top)$ is a terminal object ($\top$ denotes the greatest predicate in the
fiber over $1\times 1\cong 1$).

An equalizer of $f,g:(C,\rho)\to (D,\sigma)$ is given by
$\id:(C,\tau)\to(C,\rho)$, where $\tau$ is defined as
\[
\tau(x,y)\defpred\rho(x,y)\wedge\sigma(fx,gx).
\]

\medskip
\emph{Ad \ref{lem:fp-qtop:omega}.}
This follows from the transitivity and symmetry of logical equivalence.

\smallskip
\emph{Ad \ref{lem:fp-qtop:epis}.}
We give the proof of this statement in detail to give the reader an idea of how
to carry out this kind of argument in the internal language.
Assume that $e:(C,\rho)\to(D,\sigma)$ in $\funf\trip$ such that
$\sigma(y,y)\vdash\exists x\qdot \rho(x,x)\wedge\sigma(ex,y)$ holds in
$\trip$,
and that $g,h:(D,\sigma)\to(E,\eta)$ such that $ge=he$ as morphisms of
$\funf\trip$, i.e., $\rho(x,x)\fCenter\eta(g(ex),h(ex))$ in $\trip$. We infer
\[
\def\ScoreOverhang{1pt} 
\def\fCenter{ \vdash }
\Axiom$\sigma(y,z)\fCenter\eta(gy,gz)$
\UnaryInf$\sigma(y,ex)\fCenter\eta(gy,g(ex))$
\Axiom$\rho(x,x)\fCenter\eta(g(ex),h(ex))$
\Axiom$\sigma(z,y)\fCenter\eta(hz,hy)$
\UnaryInf$\sigma(ex,y)\fCenter\eta(h(ex),hy)$
\TrinaryInf$\sigma(ex,y),\rho(x,x)\fCenter \eta(gy,hy)$
\DisplayProof.
\]
Here we use as hypotheses that $g$ and $h$ are well defined with respect to
$\sigma, \eta$, and that $ge=he$; then we substitute and reason by transitivity
and symmetry of the relations. 

We proceed by
\[
\Axiom$\sigma(y,y)\fCenter\exists x\qdot \sigma(fx,y)\wedge\rho(x,x)$
\Axiom$\sigma(fx,y),\rho(x,x)\fCenter \eta(gy,hy)$
\UnaryInf$\exists x\qdot \sigma(fx,y)\wedge\rho(x,x)\fCenter \eta(gy,hy)$
\BinaryInf$\sigma(y,y)\fCenter\eta(gy,hy)$
\DisplayProof.
\]
Here we use the remaining assumption and the conclusion of the previous
derivation. The conclusion says that $g=h$ as morphisms of $\funf\trip$, as
desired.

Conversely assume that $e:(C,\rho)\to(D,\sigma)$ is an epimorphism. By the
(semi-)universal property of $\prop$, there exist morphisms $g,h:D\to \prop$ in
$\flc$ such that
\[
 \top \dashv\vdash\triptr(gy)\qtext{and}\exists x\qdot \rho(x,x)\wedge
\sigma(ex,y)\dashv\vdash\triptr(hy).
\]
It is easy to see that these morphisms are well defined with respect to
$\sigma$ and $\pereqv$ and thus give rise to morphisms
$g,h:(D,\sigma)\to(\prop,\pereqv)$ in $\bftr$. Moreover, these morphisms are
equalized by $e$, i.e., $ge=he:(C,\rho)\to(\prop,\pereqv)$, as a calculation in
the logic of $\trip$ shows. Now $e:(C,\rho)\to (D,\sigma)$ is an epimorphism,
and thus we have $g=h:(D,\sigma)\to(\prop,\pereqv)$, which is equivalent to the
validity of the judgment $\sigma(y,y)\vdash \pereqv(gy,hy)$. By unfolding
the definition of $\pereqv$ and making use of the characterizations of $g,h$,
we thus deduce $\sigma(y,y)\vdash\exists x\qdot\rho(x,x)\wedge\sigma(ex,y)$,
as desired.

\smallskip
\emph{Ad \ref{lem:fp-qtop:compat-to-sub}.} It is straightforward to verify that
$\sigma|_\varphi$ is a partial equivalence relation, and that $\id_D$ is well
defined with respect to $\sigma|_\varphi$ and $\sigma$.

To see that $\id:(D,\sigma|_\varphi)\to(D,\sigma)$ is a monomorphism, assume
that $f,g:(C,\rho)\to(D,\sigma|_\varphi)$ such that the compositions with
$\id:(D,\sigma|_\varphi)\to(D,\sigma)$ are equal in $\bftr$, i.e.,
$\rho(x,x)\vdash\sigma(fx,gx)$ in $\trip$. Because
$f:(C,\rho)\to(D,\sigma|_\varphi)$, we also have $\rho(x,x)\vdash\varphi(fx)$,
and the conjunction $\rho(x,x)\vdash\sigma(fx,gx)\wedge\varphi(fx)$ of the two
statements is equivalent to $f=g:(C,\rho)\to(D,\sigma|_\varphi)$.

It remains to show that the map is even a cocover. To see this, consider the
square
\[
\cxymatrix{
(D,\sigma)\depi[d]_e\ar[r]^f & (C,\rho|_\varphi)\ar[d]^\id \\
(E,\eta)\ar[r]_g & (C,\rho)
}
\]
with $e:(D,\sigma)\to(E,\eta)$ an epimorphism.
We have to verify that $g$ is well defined with respect to $\eta$ and
$\rho|_\sigma$, which amounts to establishing $\eta(y,y)\vdash\varphi(gx)$.
This can be derived by applying the characterization of epimorphism of the
previous item to $e$. Finally it is easy to see that the two induced triangles
commute.

\smallskip
\emph{Ad \ref{lem:fp-qtop:mor-to-compat}.}
This is completely straightforward using the previously established facts.

\smallskip
\emph{Ad \ref{lem:fp-qtop:compat-sub-bij}.}
Also straightforward.

\smallskip
\emph{Ad \ref{lem:fp-qtop:reindexing}.}
Also straightforward.

\smallskip
\emph{Ad \ref{lem:fp-qtop:eff-quots}.}
Via the bijection established in \ref{lem:fp-qtop:compat-sub-bij}, the strong
equivalence relations on $(C,\rho)$ correspond to predicates
$\tau\in\trip_{C\times C}$ which are partial equivalence relations, compatible
with $\rho\perprod\rho$, and furthermore total with respect to $\rho$ in the
sense that $\rho(x,x)\vdash\tau(x,x)$. 
It turns out that for partial equivalence relations, the conjunction of
totality and compatibility can be expressed in a slightly simplified manner: A
partial equivalence relation $\tau$ is compatible with respect to
$\rho\perprod\rho$ and total (with respect to $\rho$), iff the judgments
\begin{equation}
\tau(x,x)\vdash\rho(x,x)\qtext{and}\rho(x,y)\vdash\tau(x,y)\label{eq_ter}
\end{equation}
hold in $\trip$. Thus, summing up, the strong equivalence relations on
$(C,\rho)$ correspond to the predicates on $C$ which are partial equivalence
relations \emph{and} satisfy the judgments \eqref{eq_ter}.

Given such a representative $\tau$ of a strong equivalence relation, the
obvious
candidate for the quotient map is
\[
 \id:(C,\rho)\to(C,\tau).
\]
We have to verify that
\[
 (C\times
C,(\rho\perprod\rho)|_\tau)\rightrightarrows(C,\rho)\rightarrow(C,\tau)
\]
is indeed a coequalizer diagram. This is straightforward and left to the reader
(Hint: Show first that the map is an epimorphism using \ref{lem:fp-qtop:epis}).

To verify that the quotient is effective, note that the kernel pair of a map
$f:A\to B$ in a finite limit category can be computed as the pullback of the
diagonal $\delta:B\to B\times B$ along $f\times f$, and then use the
representation of the fibration of cocovers given in
\ref{lem:fp-qtop:reindexing}.

The second part of the claim follows because every regular epimorphism is the
quotient of its kernel pair, which is a strong equivalence relation.

\smallskip
\emph{Ad \ref{lem:fp-qtop:reg-pullback}.}
If we pull a regular epimorphism in `normalized
presentation' $\id:(D,\sigma)\to(D,\tau)$ with
$\tau(x,x)\vdash\sigma(x,x)$  back along a morphism
$f:(C,\rho)\to
(D,\tau)$, we get a square of the form
\[
\cxymatrix{
(C\times D,\theta) \ar[r]^{\pi_1}\ar[d]_{\pi_0} & (D,\sigma)\ar[d]^\id\\
(C,\rho)\ar[r]_f & (D,\tau)
},
\]
where $\theta(c,d,c',d')\idefpred
\rho(c,c')\wedge\sigma(d,d')\wedge\tau(fc,d)$. To see that $(C\times
D,\theta)\to (C,\rho)$ is a regular epimorphism, observe that it is isomorphic
to $(C\times D,\theta)\to (C\times D,\xi)$ with $\xi(c,d,c',d')\idefpred 
\rho(c,c')\wedge\tau(d,d')\wedge\tau(fc,d)$ via the isomorphism $\langle
\id,f\rangle:(C,\rho)\rightleftarrows (C\times D,\xi):\pi_0$, and $(C\times
D,\theta)\to (C\times D,\xi)$ is of the canonical form for regular
epimorphisms.

\smallskip
\emph{Ad \ref{lem:fp-qtop:pow-objs}.}
For a given object $(C,\rho)$ of $\bftr$, we define its power object as
$(\tripower C,\tripower\rho)$ with
\begin{align*}
(\tripower\rho)(m,n)\defpred& (\forall x\qdot x\in m\Rightarrow\rho(x,x))\\
&\wedge (\forall x,y\qdot  x\in m\wedge\rho(x,y)\Rightarrow y\in m) \\
&\wedge (\forall x\qdot x \in m\Leftrightarrow x\in n)
\end{align*}
The cocover corresponding to the element predicate is represented by
\[
 x\in_\rho m \defpred x\in_C m\wedge(\tripower\rho)(m,m)
\]
The verification that these definitions make sense and give rise to a
representation of $\subc(-\times(C,\rho))$ are tedious, but straightforward
(again, the use of \ref{lem:fp-qtop:reindexing} is essential).

\smallskip
\emph{Ad \ref{lem:fp-qtop:q-top}.}
This follows from items \ref{lem:fp-qtop:flims}, \ref{lem:fp-qtop:eff-quots}
and \ref{lem:fp-qtop:pow-objs}.
\end{proof}

\begin{remark}
In Section~\ref{suse_conv}, we stated that we always want to work with
\emph{chosen} limits and colimits whenever we speak about such structures. In
light of this statement, if we asserted in the previous lemma that $\funf\trip$
always \emph{has} finite limits and quotients of strong equivalence relations,
from now on we think of it as equipped with the choices of limits and quotients
whose constructions were sketched in the proof. On the contrary, we are happy
with the mere \emph{existence} of power objects.
\end{remark}

After having described the object part of $\funf$, we come to its action on
1-cells.
Assume that $\effi:\trip\to\triq$ is a tripos morphism. The functor
$\funf\effi:\funf\trip\to\funf\triq$ is given by
\[
\vcenter{
 \xymatrix@!0{
(C,\rho)\ar[dd]^f & & \mapsto & & (FC,\Phi_{C,C}\rho)\ar[dd]^{Ff}\\
&&\mapsto\\
(D,\sigma) & & \mapsto & & (FD,\Phi_{D,D}\sigma)\\
}}
.
\]
See Definition~\ref{def_coherence-stuff}.\ref{def_coherence-stuff_relsym} for
the notation $\Phi_{C,C}\rho$.
To see that this construction is well defined, we have to show that
$\Phi_{C,C}\rho, \Phi_{D,D}\sigma$ are partial equivalence relations and that
$f\mapsto Ff$ is compatible with the partial equivalence relations on
$\flc(C,D)$ and $\fld(FC,FD)$. This is a consequence of
Corollary~\ref{cor_trimo-judgement}. Functoriality is clear, and furthermore
we have:
\begin{lemma}\label{lem_efeffi}
For every tripos morphism $\effi:\trip\to\triq$, $\funf\effi$ preserves finite
limits (thus in particular cocovers), and covers. 

If $\Phi$ commutes with existential quantification \emph{along projections},
then $\funf\effi$ also maps epimorphisms to epimorphisms.
\end{lemma}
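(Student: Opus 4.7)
The plan is to verify the three parts of the statement separately, using the characterizations of limits, covers and epimorphisms in $\funf\trip$ established in Lemma~\ref{lem:fp-qtop}, together with the coherence result (the lemma preceding Corollary~\ref{cor_trimo-judgement}) that tells us which internal-language judgments of $\trip$ transfer to $\triq$ along $\funf\effi$.

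For the preservation of finite limits, I would go through the terminal object, binary products and equalizers whose concrete constructions were recorded in the proof of Lemma~\ref{lem:fp-qtop}.\ref{lem:fp-qtop:flims}. In each case the candidate target in $\funf\triq$ is obtained by applying $F$ to the underlying object of $\flc$ and replacing the defining PER by its $\Phi$-image. Since the PERs of these limits are built from the given data using only binary meets, $\top$ and reindexing along morphisms that $F$ preserves (projections, diagonals, and the given maps $f,g$), the coherence result applies directly and shows that $\funf\effi$ sends the chosen limiting cones to limiting cones up to canonical isomorphism. Preservation of cocovers is then automatic from Lemma~\ref{lem_cocov_reg}, since cocovers in a q-topos coincide with regular monomorphisms, which are preserved by any finite-limit-preserving functor.

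For covers, I would combine Lemma~\ref{lem_q-topos-regular}, which identifies covers with regular epimorphisms in any q-topos, and Lemma~\ref{lem:fp-qtop}.\ref{lem:fp-qtop:eff-quots}, which states that every regular epimorphism in $\funf\trip$ is, up to postcomposition with an isomorphism, of the canonical form $\id:(D,\sigma)\to(D,\tau)$ with $\tau(x,x)\vdash\sigma(x,x)$. Applying $\funf\effi$ yields $\id:(FD,\Phi\sigma)\to(FD,\Phi\tau)$, and the required entailment between the diagonal restrictions of $\Phi\sigma$ and $\Phi\tau$ in $\triq$ follows from the coherence result since the judgment characterizing this canonical form involves only atomic predicates and reindexing along $F$-preserved maps. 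Hence the image is again in canonical form and is a regular epimorphism.

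Finally, for the second statement of the lemma, I would invoke the explicit characterization of epimorphisms given in Lemma~\ref{lem:fp-qtop}.\ref{lem:fp-qtop:epis}: the map $e:(C,\rho)\to(D,\sigma)$ is epi in $\funf\trip$ iff
\[
y\csep\sigma(y,y)\vdash\exists x\qdot\rho(x,x)\wedge\sigma(ex,y)
\]
holds in $\trip$. The right-hand side is built from atomic predicates using only $\wedge$ and a single $\exists$ along a projection, so under the stated hypothesis that $\Phi$ commutes with existential quantification along projections, a mild extension of the coherence argument transfers this judgment to $\triq$, yielding that $Fe$ is epi in $\funf\triq$. The main subtlety throughout is keeping careful track of which fragment of the internal language is needed at each step, so that the claims about limits and covers use only the part of the coherence result valid for every tripos morphism, while the epi claim genuinely requires the extra hypothesis on $\Phi$.
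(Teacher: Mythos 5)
Your proposal is correct and follows essentially the same route as the paper, whose proof is precisely to combine Corollary~\ref{cor_trimo-judgement} with the limit constructions of Lemma~\ref{lem:fp-qtop}.\ref{lem:fp-qtop:flims}, the canonical form of regular epimorphisms in \ref{lem:fp-qtop}.\ref{lem:fp-qtop:eff-quots}, and the characterization of epimorphisms in \ref{lem:fp-qtop}.\ref{lem:fp-qtop:epis}. You merely spell out the details the paper leaves implicit, including the correct observations that the relevant judgments avoid equality and use $\exists$ only along projections, and that cocovers come for free via Lemma~\ref{lem_cocov_reg}.
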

\begin{proof}
These claims follow from Corollary~\ref{cor_trimo-judgement}, using the
construction of finite limits in the proof of
Lemma~\ref{lem:fp-qtop}.\ref{lem:fp-qtop:flims}, the characterization of
regular epimorphisms in 
\ref{lem:fp-qtop}.\ref{lem:fp-qtop:eff-quots}, and the characterization of
epimorphisms given in \ref{lem:fp-qtop}.\ref{lem:fp-qtop:epis}.
\end{proof}

Now we come to the action of $\funf$ on 2-cells. Let
$\eta:\effi\to\ggamma:\trip\to\triq$ be a tripos transformation. Then we can
define a natural transformation $\funf\eta:\funf\trip\to\funf\triq$ whose
component at $(C,\rho)$ is
\[
 \eta_C: (FC,\Phi\rho)\to (GC,\Gamma\rho)
\]
The verification that this make sense is straightforward and left to the
reader.

Now the description of $\funf$ is complete, and we can show:
\begin{lemma}\label{lem_f_spec_strict}
The previous constructions establish a \emph{strict special functor}
\[
 \funf:\cattrip\to\catqtop.
\]
\end{lemma}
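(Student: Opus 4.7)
The plan is to verify that the construction is strictly 2-functorial and sends regular tripos morphisms to \regular\ 1-cells of $\catqtop$; once these are in place the remaining clauses of Definition~\ref{def_preequipment}.\ref{def_preequipment_sfun}---invertibility of the identity and composition constraints---hold trivially, since a strict 2-functor has identity constraints.

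For strict functoriality on 1-cells, I would observe that the identity tripos morphism $(\id_\flc,\id_\flx)$ induces the identity on $\funf\trip$ because $(\id_\flx)_{C,C}\rho=\rho$, and that for composable tripos morphisms $(F,\Phi):\trip\to\triq$ and $(G,\Gamma):\triq\to\trir$ both $\funf((G,\Gamma)(F,\Phi))$ and $\funf(G,\Gamma)\funf(F,\Phi)$ send an object $(C,\rho)$ to $(GFC,\Gamma_{FC,FC}\Phi_{C,C}\rho)$ and act as $GFf$ on representing morphisms. The only subtlety is strict equality of the predicate part, which uses the fact that the fibres of a tripos are posets, so fibred functors between triposes preserve chosen cartesian liftings on the nose, and the product-preservation isomorphism for the composite $GF$ is canonically the composite of the individual ones. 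On 2-cells $\funf\eta$ has components $\eta_C$, so vertical and horizontal composition of tripos transformations translate directly into the corresponding operations on natural transformations, giving strict 2-functoriality.

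For the special-functor condition, let $\effi$ be a regular tripos morphism, so that $\Phi$ maps cocartesian arrows to cocartesian arrows, or equivalently commutes with existential quantification along all morphisms of $\flc$ (in particular along projections). Lemma~\ref{lem_efeffi} then yields that $\funf\effi$ preserves finite limits, covers and epimorphisms; by Lemma~\ref{lem_q-topos-regular} the covers in a q-topos coincide with the regular epimorphisms, so $\funf\effi$ preserves both epimorphisms and regular epimorphisms, which by Definition~\ref{def:dccat-qtop} is precisely the condition for $\funf\effi$ to be a \regular\ 1-cell of $\catqtop$. The only real bookkeeping obstacle is the strict-equality issue for composition mentioned above; all other parts reduce directly to results already established.
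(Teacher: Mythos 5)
Your proposal is correct and follows essentially the same route as the paper: the paper's proof likewise reduces well-definedness and the speciality clause to Lemma~\ref{lem_efeffi} (together with Lemma~\ref{lem:fp-qtop}), and dismisses the remaining constraint clauses because strictness makes all constraints identities. You have merely expanded the paper's ``straightforward to verify'' remark on 2-functoriality, correctly isolating the same key point the paper flags --- that posetal (hence skeletal) fibers force fibered functors to commute with reindexing on the nose, so that the predicate parts of $\funf\ggamma\,\funf\effi$ and $\funf(\ggamma\,\effi)$ agree strictly rather than merely up to canonical isomorphism.
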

\begin{proof}
Well definedness follows from Lemmas~\ref{lem:fp-qtop}.\ref{lem:fp-qtop:q-top}
and \ref{lem_efeffi}.
By strict, we mean that the construction is 2-functorial. This is
straightforward to verify; note that here it is important that the fibers of
the triposes are posets and not mere preorders.
Finally, $\funf$ is special because of
Lemma~\ref{lem_efeffi}.
\end{proof}

\subsubsection{The unit \texorpdfstring{$\pseta$}{eta}}\label{sec_f_s_unit}

The unit of $\funf\dashv\funs$ is a special transformation
$\pseta:\id_\cattrip\to \funs\funf$. Its component at $\trip:\flx\to\flc$ is
the tripos transformation
\[\pseta_\trip = (D_\trip,\Delta_\trip):\trip\to\funs\funf\trip,\]
which we describe now.

$D_\trip:\bbc\to\funf\trip$ is defined by
\[
 \vcenter{\xymatrix@!0{
C\ar[dd]^f & & \mapsto & & (C,=)\ar[dd]^{f}\\
&&\mapsto\\
D & & \mapsto & & (D,=)\\
}}.
\]
Using the same techniques as in the previous section, it is quite easy to see
that $D_\trip$ is a finite product preserving functor. \emph{(It does, however,
in general not preserve equalizers!)}

$\Delta_\trip:\flx\to\funcoc(\funt\trip)$ maps predicates $\varphi\in\trip_C$
to subobjects
\[
 (C,\predeq|_\varphi)\ecocov (C,\predeq).
\]
It is not difficult to verify that this defines a fibered functor over
$D_\trip$, and that $(D_\trip,\Delta_\trip)$ is a regular tripos morphism.

For a tripos transformation $\effi:\trip\to\triq$, the transformation
constraint 
\[
\vcenter{\xymatrix@!@+0.5pc{
\trip
	\ar[r]^{\effi}
	\ar[d]_{\eta_\triq} &
\triq
	\ar[d]^{\eta_\trip}
\\
\funs\funf\trip
	\ar[r]_{\funs\funf\effi} &
\funs\funf\triq
\ar@{<=}@<-1ex>"2,1"*+++++\frm{};"1,2"*++++\frm{}^{\eta_{\effi}\!\!\!}
}} \quad= \sdi{f-s-eta-constr-generic}
\]
has components
\begin{equation}
\eta_{\effi,C} = \id_{FC}: (FC,=)\to (FC,\Phi\predeq).
\label{eq_fs_unit_constr}
\end{equation}
The verifications that $\eta_{\effi}$ is a natural transformation that gives
rise to a tripos transformation, and that $\eta$ is a lax transformation are
left to the reader. To verify that $\eta$ is even a special transformation, we
have to show that $\eta_{\effi}$ is invertible whenever $\effi$ commutes with
existential quantification. This is apparent from the presentation
\eqref{eq_fs_unit_constr} of the components of $\eta_\effi$, because the
equality predicate is defined in terms of existential quantification, and thus
$(\Phi\predeq)=(\predeq)$.

\subsubsection{The counit
\texorpdfstring{$\psve$}{epsilon}}\label{suse_f-s-ve}

First of all a linguistic remark. In this section, we consider the
category $\fsc$, which is formally obtained by applying a tripos theoretic
construction, defined in the internal language of triposes,  to the fibration
of cocovers on a q-topos. This leads us to reasoning in $\funs\qcc$
using not the \emph{core calculus} of Section~\ref{sec_q_toposes} but the
higher order logic of Section~\ref{sec_triposes}. This doesn't matter too much
since higher order intuitionistic can be embedded into the core calculus
anyway, but has the formal consequence that predicates will not be
$\Omega$-valued morphisms as in Section~\ref{sec_q_toposes}, but equivalence
classes of cocovers.

\medskip

The components of the counit
$\psve:\funf\funs\to\id_\catqtop$ of $\funf\dashv\funs$ are functors 
\[\psve_\qcc:\fsc\to\qcc,\]
which are defined as follows. An object of $\funf\funs\qcc$ is a pair
$(C,[r])$,
where $C$ is an object of $\qcc$ and $[r]$ is an equivalence
class of cocovers $r:R\ecocov C\times C$ which is a partial
equivalence relation in $\funs\qcc$. Let $m:C_0\ecocov C$ be a representative%
\footnote{The constructions here rely on the presence of chosen limits and
colimits, and depend on the choice or representatives of predicates, for
example for the construction of certain pullbacks. Thus, it seems that our
decision to quotient out the subobject fibrations forces us here to use
choice. This is not true, however, since we can always obtain a canonical
representative of a subobject as $\chi^*\toptr$, where $\chi$ is the
characteristic function of the subobject.} of
the predicate $\ilbracks{x\csep r(x,x)}$. Then the predicate $[r_0](x,y)
\idefpred
[r](mx,my)$ is an equivalence relation on $C_0$, and we define
$C/r$ to be its quotient in $\qcc$. We thus have a
subquotient
\[ C/r \stackrel{e}{\ecovleft} C_0 \stackrel{m}{\ecocov}C.\]
Given another object $(D,[s])$ and a morphism $f:(C,[r])\to(D,[s])$
in $\funf\funs\qcc$, we claim that there is
a unique pair of mediators for the diagram
\begin{equation}
 \cxymatrix{
C/r\dashed[d]_{h} &
C_0\cov[l]_e\dashed[d]^g\cocov[r]^m & C\ar[d]^f \\
D/s & D_0\cov[l]^{e'}\cocov[r]_{m'} & D
}.\label{eq_def_ecf}
\end{equation}
Uniqueness is clear, since $m'$ is mono and $e$ is epic, and the existence of
$g$ and $h$ follows from the validity of $[r](x,x)\vdash [s](fx,fx)$ and
$[r_0](x,y)\vdash [s_0](gx,gy)$, respectively. 

We define
\[\psve_\qcc(C,[r])=C/r,\quad\psve_\qcc(D,[s])=D/s, \qtext{and}
\psve_\qcc(f)=h.\]
Functoriality of $\psve_\qcc$ follows from universal properties. Furthermore,
we have the following:

\begin{lemma}
The functor $\psve_\qcc:\fsc\to\qcc$ preserves finite limits, epimorphisms and
regular epimorphisms. This means that it is a \regular\ 1-cell in the
pre-equipment $\catqtop$.
\end{lemma}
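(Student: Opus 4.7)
The plan is to verify each of the three preservation properties separately, exploiting the explicit presentation of $\psve_\qcc$ via the subquotient diagram $C/r \ecovleft C_0 \ecocov C$ from \eqref{eq_def_ecf}, together with the internal characterizations of limits, epimorphisms and regular epimorphisms in $\fsc$ established in Lemma~\ref{lem:fp-qtop} and the parallel internal characterizations in $\qcc$ from Lemma~\ref{lem:int-lang}.

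For the terminal object, the object $(1,\top)$ has support $1$, and quotienting $1$ by the total relation on $1\times 1$ gives $1$. For binary products, the product $(C,r)\times(D,s)=(C\times D, r\perprod s)$ has support $C_0\times D_0$ (the domain of $r\perprod s$ restricted along the diagonal is classified by $r(c,c)\wedge s(d,d)$), and the restricted relation $(r\perprod s)_0$ coincides with the product relation $r_0\times s_0$ on $C_0\times D_0$. Since $\qcc$ is regular with pullback-stable regular epimorphisms (Lemma~\ref{lem:pullback}.\ref{lem:pullback:stable}), the product $e\times e':C_0\times D_0\to C/r\times D/s$ of the two quotient maps is itself a regular epi whose kernel pair is $r_0\times s_0$; hence $C/r\times D/s$ is the quotient of $C_0\times D_0$ by $(r\perprod s)_0$, which is exactly $\psve_\qcc((C,r)\times(D,s))$. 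For equalizers, the equalizer of $f,g:(C,r)\to (D,s)$ in $\fsc$ is $\id:(C,\tau)\to(C,r)$ with $\tau(x,y)\idefpred r(x,y)\wedge s(fx,gx)$ (Lemma~\ref{lem:fp-qtop}.\ref{lem:fp-qtop:flims}); its support $C_\tau$ is the subobject of $C_0$ classified by $s(fx,gx)$, which is precisely the pullback in $\qcc$ of the diagonal of $D_0$ along the pair $\langle f|_{C_0},g|_{C_0}\rangle$, and one checks directly that the quotient of $C_\tau$ by $\tau_0$ is the equalizer of $\psve_\qcc(f)$ and $\psve_\qcc(g)$ in $\qcc$.

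For regular epimorphisms, Lemma~\ref{lem:fp-qtop}.\ref{lem:fp-qtop:eff-quots} tells us that (up to postcomposition with isomorphism) every regular epi in $\fsc$ is of the form $\id:(C,\rho)\to(C,\tau)$ where $\tau$ is a PER containing $\rho$ with the same support $C_0$; $\psve_\qcc$ sends this to the map $C_0/\rho_0 \to C_0/\tau_0$ induced by the inclusion of equivalence relations $\rho_0\subseteq \tau_0$, which is itself the coequalizer of the kernel pair in $\qcc$ and hence a regular epi. For arbitrary epimorphisms, the characterization in Lemma~\ref{lem:fp-qtop}.\ref{lem:fp-qtop:epis} says that $f:(C,r)\to(D,s)$ is epic iff $s(y,y)\vdash\exists x\qdot r(x,x)\wedge s(fx,y)$ holds in $\funs\qcc$; interpreted internally in $\qcc$ this means that the composite $C_0\to D_0\twoheadrightarrow D/s$ hits every equivalence class, so it is epic in $\qcc$ by Lemma~\ref{lem:int-lang}.\ref{lem:int-lang:epi}. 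Since this composite factors as $C_0\twoheadrightarrow C/r \xrightarrow{\psve_\qcc(f)} D/s$ and a composite that is epic forces the second factor to be epic, $\psve_\qcc(f)$ is an epimorphism. The main obstacle is keeping the bookkeeping under control when translating the tripos-internal judgments into statements about maps in $\qcc$ via the subquotient presentation; once the identification of supports and the compatibility of products with effective quotients are in place, each individual preservation statement reduces to a direct verification using the universal properties of quotients and regularity of $\qcc$.
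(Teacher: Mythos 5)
Your proof takes essentially the same route as the paper's: terminal object trivially; binary products by identifying the subquotient span for $(C\times D,[r]\perprod[s])$ with the product of the two subquotient spans (the paper carries out this same identification in the internal language, where you invoke pullback-stability of regular epis and closure of covers/cocovers under products --- same substance); epimorphisms via Lemma~\ref{lem:fp-qtop}.\ref{lem:fp-qtop:epis} transported through the span and Lemma~\ref{lem:int-lang}.\ref{lem:int-lang:epi} (the paper derives the internal surjectivity judgment for the mediating arrow directly, you factor through $C_0$ and cancel --- equivalent); and regular epimorphisms via the canonical form of Lemma~\ref{lem:fp-qtop}.\ref{lem:fp-qtop:eff-quots}. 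On this last point your assertion that the induced map $C_0/\rho_0\to C_0/\tau_0$ ``is itself the coequalizer of the kernel pair'' is true but not automatic; the paper's short justification is worth adopting: the composite $C_0\to C_0/\rho_0\to C_0/\tau_0$ is a cover, the second factor of a cover is a cover by orthogonality, and covers coincide with regular epimorphisms (Lemma~\ref{lem_q-topos-regular}).

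One step in your equalizer sketch is, however, wrong as stated. The support of $\tau(x,y)\coloneq r(x,y)\wedge s(fx,gx)$ is classified by $r(x,x)\wedge s(fx,gx)$, as you first say; but this is \emph{not} the pullback of the diagonal of $D_0$ along $\langle f|_{C_0},g|_{C_0}\rangle$ --- that pullback classifies the on-the-nose equality $fx=gx$, which is strictly stronger than $s(fx,gx)$ (two elements of the support may be $s$-related without being equal). The correct description is the pullback of the cocover $s$ along $\langle f,g\rangle$ restricted to $C_0$, equivalently the equalizer of the two composites of $f|_{C_0},g|_{C_0}:C_0\to D_0$ with the quotient map $D_0\to D/s$, i.e., the pullback of the diagonal of $D/s$ rather than of $D_0$. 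With your identification the resulting quotient would in general be a proper subobject of the equalizer of $\psve_\qcc(f)$ and $\psve_\qcc(g)$, so the verification as written would fail; with the corrected support your ``check directly'' goes through and coincides with the ``similar argument'' that the paper leaves implicit for equalizers.
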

\begin{proof}
The terminal object is clearly preserved, since it is given by $(1,[\id])$ in
$\fsc$. 

For products, remember that a product of $(C,[r])$ and $(D,[s])$ is given by
$(C\times D,[r]\perprod [s])$. We form the subquotient spans
$Q\stackrel{e}{\ecovleft} C_0 \stackrel{m}{\ecocov} C$ and
$R\stackrel{p}{\ecovleft} D_0 \stackrel{n}{\ecocov} D$
corresponding to the strong equivalence relations $[r]$ and $[s]$. We want to
show
that
$Q\times R \stackrel{e\times p}{\ecovleft} C_0\times D_0 \stackrel{m\times
n}{\ecocov} C\times D$ is a subquotient span for $[r]\perprod [s]$. First of
all,
we
note that the classes of cocovers and regular epimorphisms are both closed
under
pullback and composition, and thus under products%
\footnote{
The regular epimorphisms are closed under composition because they coincide
with the
covers.
}.
This implies that the legs of the product span are again a regular epi and a
cover.

To see that $[m\times n](x,y)=[r](x,x)\wedge [s](y,y)$, observe that every
predicate $[m]$ is equivalent to $\ilbracks{y\csep
\exists x\qdot [m]x=y}$. 
Thus $[m\times n]$ coincides with $\ilbracks{x,y\csep \exists
x_0,y_0\qdot mx_0=x\wedge ny_0=y}$, which is equivalent to $\ilbracks{x,y\csep
(\exists
x_0\qdot mx_0=x)\wedge(\exists y_0\qdot ny_0=y)}$ and to $\ilbracks{x,y\csep
r(x,x)\wedge s(y,y)}$.

Next we want to show that $e\times p$ is a coequalizer of the components of
$[r_0]\perprod [s_0] = \ilbracks{x,y,x',y'\csep [r](mx,mx')\wedge
[s](ny,ny')}$. Because
$e\times p$ is a regular epimorphism, it suffices to show that its kernel pair
is equivalent to $[r_0]\perprod [s_0]$. This follows immediately, because the
kernel pair can be expressed in the internal language as
$\ilbracks{x,y,x',y'\csep ex=ex'\wedge py=py'}$.

A similar argument shows the preservation of equalizers.

\smallskip

Now let $f:(C,[r])\to (D,[s])$ be an epimorphism in $\fsc$. To show that its
image
under
$\psve_\qcc$ --- i.e.\ the arrow $h$ in diagram \eqref{eq_def_ecf} is again an
epimorphism, we have to show that $\ilbracks{v\csep\vdash\exists u\qdot fu=v}$
holds,
but this
can be derived from the valid judgments
$\ilbracks{x\csep [r](x,x)\vdash \exists x_0\qdot mx_0=x}$, $\ilbracks{u\csep
\exists x_0\qdot
ex_0=u}$, $\ilbracks{y_0\csep\vdash [s](ny_0,ny_0)}$ and $\ilbracks{y\csep
[s](y,y)\vdash\exists
x\qdot [r](x,x)\wedge fx=y}$.

\smallskip

Finally, we have to show that $\psve_\qcc$ preserves regular epimorphisms. For
a
regular epimorphism in canonical representation, i.e. $\id:(C,[r])\to(C,[s])$
with
${[s](x,x)\vdash [r](x,x)}$, the diagram of subquotient spans looks as
follows
\[
 \cxymatrix{
C/r\ar[d]_{h} & C_0\cov[l]_e\ar[d]^\id\cocov[r] & C\ar[d]^\id \\
C/s & C_0\cov[l]^{e'}\cocov[r] & C
},
\]
and the claim follows from the facts that in a composition the second arrow is
a cover whenever the composition is a cover (follows from orthogonality), and
covers coincide with regular epimorphisms (Lemma~\ref{lem_q-topos-regular}).
\end{proof}

Next we define the transformation constraint
\[
\vcenter{\xymatrix@!{
\fsc\ar[r]^{\funf\funs F}\ar[d]_{\psve_\qcc} & \fsd\ar[d]^{\psve_\qcd} \\
\qcc\ar[r]_F & \qcd
\ar@{<=}@<-.5ex>"2,1"*+++++\frm{};"1,2"*+++++\frm{}^{\psve_{F}}
}}
\]
The subquotient span $C/r \stackrel{e}{\ecovleft} C_0 \stackrel{m}{\ecocov} C$
associated to the image of an object $(C,[r])\in\objects{\fsc}$ under
$\psve_\qcc$ gets mapped to 
\begin{equation}
F(C/r) \stackrel{Fe}{\leftarrow} FC_0 \stackrel{Fm}{\ecocov}
FC\label{eq:f-subquot}
\end{equation}
by $F$. $Fe$ is not necessarily an epimorphism any more, but because $F$
preserves finite limits, $F([r]|_{[m]})$ is still its kernel.

On the other side of the diagram, $(C,[r])$ gets mapped first to $(FC,[Fr])$ by
$\funf\funs F$, and then to the object $C'_0/Fr$ in the subquotient span
\begin{equation}
FC/Fr \stackrel{m'}{\ecovleft} C_0' \stackrel{e'}{\ecocov} FC
\label{eq:subquot-f}
\end{equation}
by $\psve_\qcd$.
Now the support $C_0'$ of $Fr$ is  isomorphic to the image $FC_0$ of the
support of $m$ under $F$, because the support is defined as a pullback, and
those are preserved by $F$.
If we combine \eqref{eq:f-subquot} and \eqref{eq:subquot-f} into a diagram
\begin{equation}
 \cxymatrix{
FC/Fr\ar@{ >-->}[d]  & C_0'\cocov[r]^{m'}\ar[d]_\cong\cov[l]_{e'} & FC \\
F(C/r) & FC_0\cocov[ur]_{Fm}\ar[l]^{Fe}
},\label{eq:combi-dia}
\end{equation}
we see that the universal property of the quotient map $e'$ allows us to
construct a mediating arrow from $FC/Fr$ to $F(C/r)$ (which is moreover monic
since $e'$ and $Fe$ have isomorphic kernel pairs). This map is the component
$\psve_{F,(C,r)}$ of the transformation constraint $\psve_F$. To prove that
$\psve$ is a well defined oplax transformation, we have to verify naturality of
$\psve_F$ and the transformation axioms for $\psve$, which is straightforward; 
all the commutations follow from universal properties. 

To verify that $\psve$ is moreover a special transformation, we have to verify
that $\psve_F$ is invertible whenever $F$ is \regular. This becomes clear when
looking at diagram~\eqref{eq:combi-dia}. When $F$ is \regular\ then $Fe$ is a
regular epimorphism, because those are preserved by \regular\ functors between
q-toposes by definition. We then have two regular epimorphisms $e'$, $Fe$ with
the same kernel pair and thus the mediating arrow is an isomorphism.

\subsubsection{The modification \texorpdfstring{$\nu$}{nu}}

The components of the modification
\[
\nu: 
\psve_{\funf}\circ \funf\pseta 
\to 
\id_{\funf}
: \funf\to \funf :\cattrip\to\catqtop
\]
are natural transformations
\[
\nu_\trip:
\psve_\bftr\circ\funf\pseta_\trip
\to 
\id_\bftr
:\bftr\to\bftr,
\]
whose components are in turn morphisms of type
\[
\nu_{\trip,(C,\rho)}:
\psve_\bftr(\funf\pseta_\trip\cro)
\to 
(C,\rho)
:\bftr.
\]
Now $\funf\pseta_\trip\cro$ is the object $((C,\predeq),[\tilde{\rho}])$ in
$\funf\funs\bftr$, where
\[\tilde{\rho}: (C\times C,(\predeq\perprod\predeq)|_\rho)\ecocov(C\times
C,(\predeq\perprod\predeq))\]
is the subobject induced by $\rho$.

General objects of $\fsfp$ are of the form $(\cro,[r])$ where
$r:U\ecocov(C\times C,\rho\perprod\rho)$ 
is a partial equivalence relation on $\cro$ which can be represented by a
predicate $\tau\in\trip_{C\times C}$.

We know from the proof of Lemma~\ref{lem:fp-qtop}.\ref{lem:fp-qtop:eff-quots}
and from Section~\ref{suse_f-s-ve} how to construct a subquotient span for the
subquotient of  $(C,\rho)$ by $r$.
It is given by
\[
(C,\tau)\ecovleft(C,\rho|_{\supp(\tau)})\ecocov(C,\rho).
\]
We pointed out earlier that we have to be careful not to mix up chosen limits
and colimits with arbitrary limits and colimits, so at this point we remark
that $(C,\tau)$ is not precisely the image of $((C,\rho),[r])$ under
$\psve_\bftr$, because the object in the middle of the subquotient span (the
support of the partial equivalence relation) is defined by a pullback in
$\funf\trip$ and if we carry out the canonical pullback construction as an
equalizer of a product, and use the choices of products and equalizers
described in the proof of Lemma~\ref{lem:fp-qtop}, we end up with a partial
equivalence relation on $C\times C\times C$. Thus, all we can say at this point
is that there is a canonical isomorphism $\psve_\bftr((C,\rho),r)\cong
(C,\tau)$ which we obtain by comparing the subquotient span above with the one
arising from the canonical constructions.

Instantiating with $\funf\pseta_\trip\cro=((C,\predeq),[\tilde{\rho}])$, we
obtain the desired isomorphism
\[
\nu_{\trip,(C,\rho)}:\psve_\bftr(\pseta_\trip\cro)\stackrel{\cong}{\to}(C,
\rho).
\]

The reader is invited to verify that $\nu_\trip$ is indeed a natural
transformation and that $\nu$ is a modification.

\subsubsection{The modification \texorpdfstring{$\mu$}{mu}}
 
The modification $\mu$ has the type
\[
\mu \wcol 
\id_\funs\to\funs\psve\circ\pseta_\funs
\wcol\funs\to\funs \wcol 
\catqtop\to\cattrip.
\]
Its components are tripos transformations
\[
 \mu_\qcc\wcol
\id_\bsqc\to\funs\psve_\qcc\circ\pseta_\bsqc
\wcol\bsqc\to\bsqc
\]
whose components  are morphisms
\[
\mu_{\qcc,C}\wcol
C\to\psve_\qcc(D_\bsqc C)
\]
in $\qcc$. Applying the definition in the previous sections, we see that
$\psve_\qcc(D_\bsqc C)=C/\delta_C$ in the (degenerate) subquotient span
\[
C/\delta_C{}\stackrel{\cong}{\leftarrow}\bullet\stackrel{\cong}{\to} C,
\]
i.e.\ the (sub)quotient of $C$ by the diagonal predicate. Of course a possible
subquotient span of $C$ by $\delta_C$ would be the span consisting of two
identities, but we can not use this since we pledged earlier always to use
instances of limits and (co)limits that are given by some construction of
chosen (co)limits which is given to us together with the definition of $\qcc$.
We now define $\mu_{\qcc,C}$ in the most straightforward way --- as composition
of the two isomorphisms in the subquotient span.

It remains to check that this defines a natural transformation
$\mu_\qcc:\id_\qcc\to\psve_\qcc\circ D_{\funs\qcc}$ which induces a tripos
transformation $\mu_\qcc:\id_\bsqc\to\funs\psve_\qcc\circ\pseta_\bsqc$, and
that these tripos transformations give rise to a modification $\mu
:\id_\funs\to\funs\psve\circ\pseta_\funs$. The naturality of $\mu_\qcc$ follows
directly from the definition of $\psve_\qcc$, and the fact that $\mu_\qcc$ is a
tripos transformation is also easy to see. The verification of the modification
axiom boils down to showing that for any finite limit preserving functor
$F:\qcc\to\qcd$ between q-toposes and any $C\in\objects{\qcc}$, the square
\[
 \xymatrix@C+20pt{
FC/\delta_C\ar[r]^-{\psve_\qcd(\pseta_{\funs F,C})} & FC/F\delta_C
\ar[d]^-{\psve_{F,(C,[\delta])}}\\
FC\ar[u]^-{\mu_{\qcd,FC}}\ar[r]_{F\mu_{\qcc,C}} & F(C/\delta_C)
}
\]
commutes. A careful inspection of the constructions shows that all the arrows
arise as mediating arrows between spans which are anchored at $FC$ on one side
(and furthermore have isomorphic legs), and from this commutation is evident.

\subsubsection{The axioms}

To check that the data $F,S,\pseta,\psve,\mu,\nu$ form a special biadjunction,
we have to check the equalities of modifications stated in
\eqref{eq_spec_adj_ax}. Equality of modifications is componentwise equality,
and for the components, which are tripos transformations for the first axiom,
equality is in turn componentwise equality of the underlying natural
transformations. By evaluating at $\trip:\flx\to\flc$ and $C\in\objects{\flc}$,
we see that we have to check the commutativity of the following square.
\[
 \xymatrix@1@C+40pt{
\psve_{\funf\trip}(D_{\funs\funf\trip}(D_\trip C))
    \ar[r]^{\psve_{\funf\trip}(\pseta_{\pseta_\trip,C})}
& 
\psve_{\funf\trip}(\funf\pseta_{\trip}(D_\trip C))
    \ar[d]^{\nu_{\trip,D_\trip C}}
\\
D_\trip C
    \ar[u]^{\mu_{\funf\trip,D_\trip C}}
    \ar[r]_\id
& 
D_\trip C
}
\]
Instantiating the constructions, we obtain
\[
\vcenter{
 \xymatrix@1@C+40pt{
(C,\predeq)/\predeq 
    \ar[r]^{\psve_\bftr(\pseta_{\pseta_\trip,C})}
& 
(C,\predeq)/\Delta\predeq 
    \ar[d]^{\nu_{\trip,(C,\predeq)}} 
\\
(C,\predeq)
    \ar[u]^{\mu_{\funf\trip,(C,\predeq)}}
    \ar[r]_\id
& 
(C,\predeq)
}}.
\]
To see that this commutes, observe that the underlying arrows in $\flc$ are all
equal to $\id_C$, regardless of the partial equivalence relations.

\smallskip

The verification of the second axiom amounts to checking that for every q-topos
$\qcc$ and every $(C,[r])\in{\funf\funs\qcc}$, the square
\[
 \xymatrix@C+40pt{
\psve_\qcc(\funf\funs\psve_\qcc(\funf\pseta_{\funs\qcc}(C,[r])))
    \ar[r]^{\psve_{\psve_\qcc}(\funf\pseta_{\funs\qcc}(C,[r]))}
&
\psve_\qcc(\psve_{\funf\funs\qcc}(\funf\pseta_{\funs\qcc}(C,[r])))
    \ar[d]^{\psve_\qcc(\nu_{\funs\qcc}{\funs\qcc}(C,[r]))}
\\
\psve_\qcc(C,[r])
    \ar[r]_\id
    \ar[u]^{\psve_\qcc(\funf\mu_\qcc(C,[r]))}
&
\psve_\qcc(C,[r])
}
\]
commutes in $\qcc$. The verification of this is cumbersome, but in the end it
boils down to observing that different mediating arrows between spans anchored
at $C$ are the same, which is a similar argument to the one we sketched to
establish that $\mu$ is a modification.

\subsection{The biadjunction \texorpdfstring{$\funt\dashv \funu$}{T,U}
between \\
q-toposes and toposes}\label{suse:adj-t-u}

Now we come to the special biadjunction
$(\funt\dashv\funu,\pseta,\psve,\nu,\mu)$ between the pre-equipments of
q-toposes and toposes 
Fortunately, this is a lot easier
and less technical than the biadjunction $\funf\dashv\funs$ of the previous
section.

\subsubsection{The special functors \texorpdfstring{$\funt$}{T} and
\texorpdfstring{$\funu$}{U}}

The functor
\[
 \funu:\cattop\to\catqtop
\]
is just the inclusion. It is well defined because every topos is a q-topos and
in a topos epimorphisms and regular epimorphisms coincide and therefore any
functor that preserves the former automatically preserves the latter.

The object part of the functor
\[
 \funt:\catqtop\to\cattop
\]
was already given in Lemma~\ref{lem:coarse-objects}; $\funt\qcc$ is the full
subcategory of $\qcc$ on the coarse objects. 

The action of $\funt$ on functors and natural transformations is given by
\begin{align*}
\funt F & = J_\qcd\; F\; I_\qcc & \qtext{where} F & : \qcc\to\qcd \\
\funt \theta & = J_\qcd\; \theta\; I_\qcc & \qtext{where} \theta &: F\to G :
\qcc\to\qcd.
\end{align*}
It follows from
Lemma~\ref{lem:coarse-objects}.\ref{lem_coarse-objects_j-regular} that $\funt
F$ preserves finite limits, and from 
\ref{lem:coarse-objects}.\ref{lem_coarse-objects_i-preserves-epis} that $\funt
F$ is \regular\  whenever $F$ is \regular.

For $F:\qcb\to\qcc$ and $G:\qcc\to\qcd$, the composition constraint
$\funt(GF)\to \funt G\; \funt F$ is given by
\[
 \sdi{t-comp-constraint},
\]
and the identity constraint $\funt\id_\qcc\to\id_\qcc$ is given by
\[
 \sdi{t-id-constraint}.
\]

The axioms for oplax functors follow formally from 2-categorical reasoning in
$\catcat$, using only the axioms for the adjunctions $J_\qcc\dashv I_\qcc$.

To show that $\funt$ is in fact a special functor, it remains to show that
$\funt(GF)\to\funt G\; \funt F$ is invertible whenever $G$ is \regular. This is
a direct consequence of the following lemma. 
\begin{lemma}\label{lem:nat-inv}
The natural transformation
\[
\sdi{t-comp-constraint-invertible}
\]
is invertible whenever $G$ \regular.
\end{lemma}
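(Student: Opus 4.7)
The plan is to recognise the displayed 2-cell as a whiskering of the reflection unit $\eta : \id_\qcc \to I_\qcc J_\qcc$ of the adjunction $J_\qcc \dashv I_\qcc$, and then to track the monic-epic property through the two outer whiskerings until it lands inside a balanced category. Unfolding the string diagram for the composition constraint, the natural transformation in question has at $B \in \funt\qcb$ the component
\[
J_\qcd\bigl(G(\eta_{F I_\qcb B})\bigr) \;:\; J_\qcd G F I_\qcb B \longrightarrow J_\qcd G I_\qcc J_\qcc F I_\qcb B.
\]

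The first step is to recall from the proof of Lemma~\ref{lem:coarse-objects}.\ref{lem_coarse-objects_reflective} that each $\eta_C$ was constructed as the epi part of an epi-cocover factorisation of the monomorphism $\ilbracks{x\csep\{y|y=x\}}$, so every such component is simultaneously monic and epic. Next I would push this property through the two whiskerings. Since $G$ preserves finite limits it preserves monomorphisms, and since $G$ is \regular\ in $\catqtop$ it preserves epimorphisms by Definition~\ref{def:dccat-qtop}; hence $G\eta_{F I_\qcb B}$ is monic and epic in $\qcd$. By Lemma~\ref{lem:coarse-objects}.\ref{lem_coarse-objects_j-regular}, $J_\qcd$ preserves both finite limits and epimorphisms, so $J_\qcd G \eta_{F I_\qcb B}$ is monic and epic in $\funt\qcd$.

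The finishing blow is Lemma~\ref{lem:coarse-objects}.\ref{lem_coarse-objects_topos}: $\funt\qcd$ is a topos and hence balanced, so each component is an isomorphism, and pointwise invertibility of a natural transformation automatically yields invertibility of the transformation itself. The only genuine obstacle I anticipate is notational, namely confirming that the string diagram really does denote the whiskering $J_\qcd G \eta F I_\qcb$ rather than some sibling 2-cell; once that identification is in place, the regularity of $G$ is used at exactly one point (to get epi-preservation by $G$), and every other ingredient is already recorded in Lemma~\ref{lem:coarse-objects}.
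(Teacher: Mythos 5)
Your proof is correct, but it finishes by a genuinely different mechanism than the paper's. You and the paper agree on the decisive identifications: the component in question is $J_\qcd(G\eta_C)$, the mediating arrow between coarse reflections; each unit $\eta_C$ is monic-epic by its construction as the epi part of an epi/cocover factorization; and the regularity of $G$ is spent at exactly one point, to make $G\eta_C$ epic. From there you go \emph{up}: you push the monic-epic property through $J_\qcd$ using Lemma~\ref{lem:coarse-objects}.\ref{lem_coarse-objects_j-regular} and then invoke balancedness of the topos $\funt\qcd$ (Lemma~\ref{lem:coarse-objects}.\ref{lem_coarse-objects_topos}) to conclude each component is invertible. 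The paper instead stays one level lower and builds an explicit inverse: since $\crefl{GC}$ is coarse and $G\eta_C$ is monic-epic, the unit $\eta_{GC}$ extends along $G\eta_C$ to a map $G\crefl{C}\to\crefl{GC}$, which by the universal property of $\eta_{G\crefl{C}}$ yields a mediating arrow $\crefl{G\crefl{C}}\to\crefl{GC}$ in the opposite direction, and the two mediating arrows are mutually inverse because the surrounding arrows are epimorphisms. The trade-off: the paper's argument needs only the defining lifting property of coarse objects and is independent of the (heavier) fact that $\funt\qcd$ is a topos, while yours is shorter, reduces everything to preservation facts already catalogued in Lemma~\ref{lem:coarse-objects}, and makes conspicuous that regularity of $G$ enters exactly once. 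Your residual worry about reading the string diagram is settled by the paper's own proof, which confirms the 2-cell is precisely the whiskering $J_\qcd G\eta$; the extra whiskering by $F I_\qcb$ occurring in the composition constraint is immaterial, since whiskering preserves invertibility.
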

\begin{proof}
Let $C\in\objects{\qcc}$.
The component at $C$ of the transformation in question is the mediating arrow
in the diagram
\[
 \cxymatrix{
GC\mono[r]^{G\eta_C}\monepi[d] & G\crefl{C}\monepi[d] \\
\crefl{GC}\dashed[r] & \crefl{G\crefl{C}}
}
\]
Now if $G$ is \regular, then $G\eta_C$ is also an epimorphism, and by
coarseness of $\crefl{GC}$, there exists a mediating arrow in the opposite
direction, which turns out to be inverse (this follows because the other arrows
are epimorphisms).
\end{proof}

\subsubsection{The special transformations \texorpdfstring{$\pseta$}{eta} and
\texorpdfstring{$\psve$}{epsilon}}

The unit $\pseta:\id_\catqtop\to \funu\funt$ of $\funt\dashv\funu$ is quite easy
to define. Its components are just the
reflection functors $J_\qcc$. The definition of special transformation requires
them to be \regular, and indeed they are, by
Lemma~\ref{lem:coarse-objects}.\ref{lem_coarse-objects_j-regular}.

The transformation constraint of $\pseta$ at $F:\qcc\to\qcd$ is given by
\[
 \pseta_F = \sdi{t-u-eta-constr-generic} = \sdi{t-u-eta-constr-concrete}
\]

where $\eta$ is the unit of $J_\qcc\dashv I_\qcc$. 
It follows from general abstract nonsense that this does indeed define an oplax
transformation, which is furthermore special as follows from
Lemma~\ref{lem:nat-inv}.

Now we come to the counit. First of all, a little stylistic/philosophical
remark to explain a design decision. Previously, we assumed that for every
q-topos $\qcc$ we have a chosen reflection $J_\qcc\dashv
I_\qcc:\funt\qcc\to\qcc$ into the subtopos of coarse objects. Now if $\qcc$ is
already a topos, then $\funt\qcc=\qcc$ and we could choose
$I_\qcc=J_\qcc=\id_\qcc$.
However, such a choice depending on whether a q-topos is already a topos is
horribly nonconstructive and not really necessary. Thus, we are content with
the
fact that whenever the q-topos is already a topos, we have an \emph{adjoint
auto-equivalence} $J_\qcc\dashv I_\qcc$.

We define the component of the counit of $\funt\dashv\funu$ at a topos
$\tope$ 
to be 
\[
d\psve_\tope =  I_{\funu\tope} : \funt\funu\tope\to\tope,
\]
which is \regular\ because it is an equivalence.

The transformation constraint
of $\psve$ at $F:\tope\to\topf$ is given by 
\[
\psve_F = \sdi{t-u-ve-constr-generic} =\sdi{t-u-ve-constr-concrete}
\]
where $\eta$ is the unit of $J_\topf\dashv I_\topf$. The reader may verify that
this gives a strong transformation.

\subsubsection{The modifications \texorpdfstring{$\nu$}{nu} and
\texorpdfstring{$\mu$}{mu}}

The component $\nu_\qcc$ of the modification
\[
 \nu\qcol \psve\funt\circ\funt\pseta\qarr\id_\funt
\]
is a isomorphic 2-cell in the pre-equipment $\catqtop$ and is given by
\[
\sdi{t-u-nu-comp-generic} = 
\sdi{t-u-nu-comp-concrete},
\]
where $\eta$ is invertible because $J_\utc\dashv I_\utc$ $\ve$ is an
equivalence and is invertible because $J_\qcc\dashv I_\qcc$ is a reflection.

We want to verify that this does indeed define a modification. If we
instantiate the modification axiom
\begin{equation}\label{eq:t-u-nu-ax-generic}
 \sdi{t-u-nu-ax-generic} = \sdi{t-u-nu-ax-generic-2}
\end{equation}
with the definitions, we obtain\footnote{The subtle point is how to unfold the
right crossing in the left diagram of \eqref{eq:t-u-nu-ax-generic}. 
This is the constraint cell $\funt\pseta_F$, which is obtained by applying
$\funt$ to the constraint cell $\pseta_F$ and then pre- and postcomposing with
composition constraints of $\funt$.
}
\[
  \sdi{t-u-nu-ax-concrete} \hspace{-4mm}= \hspace{-3mm}
\sdi{t-u-nu-ax-concrete-2},
\]
where $\alpha$ is the inverse of
\[
\sdi{alpha-inv},
\]
which is invertible by Lemma~\ref{lem:nat-inv}. The equality now follows
formally from the triangle equalities for the adjunctions $J\dashv I$.

The component of the modification
\[
 \mu\quad:\quad \id_U\qarr\funu\psve\circ\pseta\funu
\]
at $\tope\in\objects{\cattop}$ is given by
\[
\sdi{t-u-mu-constr-generic} = \sdi{t-u-mu-constr-concrete}.
\]
The modification axiom for $\mu$ is verified as follows (the first and third
equalities just instantiate definitions):
\[
 \sdi{t-u-mu-ax-generic}\!\!\!\!\!\!\! = \!\!\sdi{t-u-mu-ax-concrete}\!\!\!\!\!
=
\!
\sdi{t-u-mu-ax-concrete-2}\!\!\! =\!\!\!\! \sdi{t-u-mu-ax-generic-2}
\]

\subsubsection{The axioms}

Finally we have to verify the biadjunction axioms.
The calculation for the first axiom looks as follows:
\[
 \sdi{t-u-ax-l-generic}=\sdi{t-u-ax-l-concrete}=\sdi{t-u-ax-r-concrete}=
\sdi{t-u-ax-r-generic}
\]
And here is the calculation for the second axiom:
\[
 \sdi{t-u-ax2-l-generic}=\sdi{t-u-ax2-l-concrete}=\sdi{t-u-ax2-r-concrete}=
\sdi{t-u-ax2-r-generic}
\]
This finishes the proof that we have a special biadjunction $\funt\dashv\funu$.
We can
compose with the special biadjunction $\funf\dashv\funs$ of
Section~\ref{suse:adj-f-s} to obtain the tripos-to-topos construction, i.e., a
left biadjoint $\funt\circ\funf$ of the forgetful functor $\funs\circ\funu$
from toposes to triposes.

\subsection{Summary, examples}

After a lot of in-detail constructions and verifications, we try to give a
kind of big picture. We have related three \tworeg{} pre-equipments by a pair
of special biadjunctions.
\[
\vcenter{
\xymatrix{
\cattrip\adjr{\funf}{\funs} &
\catqtop\adjr{\funt}{\funu} & \cattop
}}
\]
As right special biadjoints, $\funs$ and $\funt$ are necessarily strong by
Lemma~\ref{lem_right_strong}, and from Lemma~\ref{lem_f_spec_strict} we know
that $F$ is strong as well (in fact all three functors are even strict). The
counit of $\funt\adj\funu$ is also strong, but the remaining constructs
are genuinely oplax, as we will exemplify in the following.

\medskip

That $\funt$ is necessarily oplax becomes apparent already from the example in
the introduction (if $\funt$ were strong as well as $\funf$, then so would be
their composite). 

\medskip

To show that the counit of $\funf\adj\funs$ is not a
strong transformation, we have to give a finite limit preserving functor
$F:\qcc\to\qcd$ between q-toposes such that the transformation constraint
$\psve_F$ is not
invertible. 

Consider the poset $D=\{\bot,l,r\}$ with $\bot\leq l$ and
$\bot\leq r$, and the global sections functor $\Gamma:\hat{D}\to\catset$. We
define a contravariant presheaf $A$ on $D$ by $A(\bot)=\{0,1\}$, $A(l)=\{0\}$,
and $A(r)=\{1\}$ where the restriction maps are just the inclusions, and define
$R$ to be the maximal equivalence relation
$R\emono A\times A$ on $A$. Then $(A,R)$ is an object of $\funf\funs\hat{D}$,
and we follow it along the two sides of the square
\[
\vcenter{\xymatrix@C+5mm{
\funf\funs(\hat{D})
    \ar[r]^{\funf\funs(\Gamma)}
    \ar[d]_{\psve_{\hat{D}}}& 
\funf\funs(\catset)
    \ar[d]^{\psve_\catset}\\
\hat{D}
    \ar[r]_\Gamma & 
\catset
}}.
\]
We have $\psve_\catset(\funf\funs(\Gamma)(A,R))=\varnothing$, since $A$
doesn't have any global elements, but $\Gamma(\psve_{\hat{D}}(A,R))=1$, since
$A$ has global support, thus $\psve_\Gamma$ is not a natural isomorphism.

\medskip

Finally, we will have a closer look at the units of $\funf\adj\funs$
and $\funt\adj\funu$, as a demonstration how our two-step decomposition gives
additional information in form of an epi-mono factorization.

Consider a triposes $\trip:\flx\to\flc$ and $\triq:\fly\to\fld$ and a tripos
 transformation $\effi:\trip\to\triq$. the unit constraint cell of
the composite biadjunction $\funt\funf\adj\funs\funu$ is a natural
transformation which is given by the pasting 
\begin{equation}\label{eq_unit_decomp}
\vcenter{\xymatrix@C+1.1cm{
\flc
    \ar[r]^F
    \ar[d]_{D_\trip}
    \emar[rd]|{\pseta_\effi\Downarrow} &
\fld
    \ar[d]^{D_\triq} \\
\funf\trip
    \ar[r]|{\funf\effi}
    \ar[d]_{J_{\funf\trip}}
    \emar[rd]|{\pseta_{\funf\effi}\Downarrow} &
\funf\triq
    \ar[d]^{J_{\funf\triq}} \\
\funt\funf\trip
    \ar[r]_{\funt\funf\effi} &
\funt\funf\triq
}},
\end{equation}
and we have the following lemma.
\begin{lemma}
\begin{enumerate}
\item The components of $\pseta_\effi$ are regular epimorphisms. $\pseta_\effi$
is a natural isomorphism whenever $\Phi$ commutes with $\exists$ along
diagonals.
\item The components of $\pseta_{\funf\effi}$ are monomorphisms.
$\pseta_{\funf\effi}$ is a natural isomorphism whenever $\Phi$ commutes with
existential quantification along projections.
\item For $C\in\flc$, 
\[
\xymatrix@C+8mm{
J_{\funf\triq}D_\triq F C
    \depi[r]^-{J_{\funf\triq}\pseta_{\effi,C}} &
J_{\funf\triq}\funf\effi D_\trip C 
    \mono[r]^-{\pseta_{\funf\effi,D_\trip C}} &
\funt\funf\effi J_{\funf\trip} D_\trip C
}
\]
is an epi-mono factorization of the unit constraint of
$\funt\funf\adj\funs\funu$ at $F$ and $C$.
\end{enumerate}
\end{lemma}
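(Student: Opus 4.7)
The plan is to unfold the explicit descriptions of the two constraint cells given in Sections~\ref{sec_f_s_unit} and~\ref{suse:adj-t-u}, then invoke the characterizations of regular epimorphisms in $\funf\triq$ (Lemma~\ref{lem:fp-qtop}) and of the coarse reflection (Lemma~\ref{lem:coarse-objects}).

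For part~1, I would use equation~\eqref{eq_fs_unit_constr}, which exhibits the component $\pseta_{\effi,C}$ as the identity on underlying objects, viewed as a morphism $(FC,\predeq) \to (FC,\Phi\predeq)$ in $\funf\triq$. Since $(x\predeq x)$ is equivalent to $\top$, the condition $\Phi\predeq(x,x)\vdash (x\predeq x)$ required by Lemma~\ref{lem:fp-qtop}.\ref{lem:fp-qtop:eff-quots} holds trivially, so the morphism is a regular epimorphism. For the isomorphism statement, I would unfold $\predeq_C$ as $\exists_{\delta_C}(\top)$; the identity $\Phi(\predeq_C) \dashv\vdash \predeq_{FC}$ then reduces, using that $F$ preserves products so that $F\delta_C = \delta_{FC}$, exactly to the commutation of $\Phi$ with existential quantification along the diagonal.

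For part~2, unwinding the definition of $\pseta_F$ in Section~\ref{suse:adj-t-u} yields $\pseta_{\funf\effi,A} = J_{\funf\triq}(\funf\effi(\eta_A))$, where $\eta_A\colon A \to I_{\funf\trip}J_{\funf\trip}A$ is the unit of the coarse reflection at $A$. The key observation is that $\eta_A$ is simultaneously monic and epic, because by the proof of Lemma~\ref{lem:coarse-objects}.\ref{lem_coarse-objects_reflective} it is constructed as the epimorphic part of an epi-cocover factorization of a monomorphism into $PA$. Both $\funf\effi$ (a finite-limit-preserving functor between q-toposes) and $J_{\funf\triq}$ (by Lemma~\ref{lem:coarse-objects}.\ref{lem_coarse-objects_j-regular}) preserve finite limits and hence monomorphisms, so $\pseta_{\funf\effi,A}$ is monic. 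If $\Phi$ moreover commutes with existential quantification along projections, then by Lemma~\ref{lem_efeffi} $\funf\effi$ also preserves epimorphisms; combined with $J_{\funf\triq}$ preserving both monos and epis, we obtain a monic and epic map in the topos $\funt\funf\triq$, which is automatically an isomorphism.

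Part~3 then follows immediately: pasting \eqref{eq_unit_decomp} at component $C$ expresses the unit constraint as $\pseta_{\funf\effi,D_\trip C}\circ J_{\funf\triq}(\pseta_{\effi,C})$, the first factor is a regular epimorphism because $J_{\funf\triq}$ preserves them (as a left adjoint, or by Lemma~\ref{lem:coarse-objects}.\ref{lem_coarse-objects_j-regular}), and the second is a monomorphism by part~2. No single step presents a serious obstacle; the only point requiring care is the explicit matching of the pasting with the claimed composite, which is a routine unwinding of horizontal and vertical composition of natural transformations.
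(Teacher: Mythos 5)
Your proof is correct and follows essentially the same route as the paper: part~1 via \eqref{eq_fs_unit_constr} and the regular-epi characterization of Lemma~\ref{lem:fp-qtop}.\ref{lem:fp-qtop:eff-quots}, part~2 via the mono-epi unit $\eta_A$ of the coarse reflection together with limit-preservation of $\funf\effi$ and $J_{\funf\triq}$ and balancedness of the topos $\funt\funf\triq$, and part~3 by pasting. Your appeal to Lemma~\ref{lem_efeffi} for epi-preservation by $\funf\effi$ is just a direct citation of what the paper re-derives on the spot from the characterization of epimorphisms in Lemma~\ref{lem:fp-qtop}.\ref{lem:fp-qtop:epis}, so the arguments coincide in substance.
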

\begin{proof}
\emph{Ad 1.} This follows directly from the definition of $\pseta_\effi$
in~\eqref{eq_fs_unit_constr} and the characterization of regular
epimorphisms in $\funf\triq$ in
Lemma~\ref{lem:fp-qtop}.\ref{lem:fp-qtop:eff-quots}.

\emph{Ad 2.} The component of $\pseta_{\funf\effi}$ at $A\in\funf\trip$ is
given by $J_{\funf\triq}\funf\effi\eta_A$, where $\eta_A:A\emonepi\overline{A}$
is the unit of the reflection of $\funf\trip$ onto the coarse objects. It is
a monomorphism since $\eta_A$ is monic and $\funf\effi$ and $J_{\funf\triq}$
preserve finite limits. Since $\funt\funf\triq$ is balanced and
$J_{\funf\triq}$ preserves epis as a left adjoint, $\pseta_{\funf\effi,A}$ will
be an isomorphism as soon as $\funf\effi$ preserves epimorphisms. But
epimorphisms $f:(C,\rho)\to(D,\sigma)$ in $\funf\trip$ are characterized by the
judgement $\sigma(d,d)\vdash\exists c\qdot \rho(c,c)\wedge \sigma(fc,d)$ and
since this judgement only contains conjunction and existential quantification
along projections, epimorphisms will be preserved by $\funf\effi$ if $\Phi$
preserves those.

\emph{Ad 3.} This follows from 1 and 2, since $J_{\funf\triq}$ preserves
epimorphisms.
\end{proof}
The tripos transformation
$\famf(\wedge):\famf(\bool\times\bool)\to\famf(\bool)$ is an example
which commutes with $\exists$ along diagonals, but not along projections. The
double negation topology on the modified realizability
tripos~\cite{vanoosten1997modified} is an example that commutes with
$\exists$ along projections, but not along diagonals.

\section{Epilogue: Conclusion, observations and first
applications}\label{sec_epilogue}

Now that we finally know that the \tttc{} is part of a special biadjunction and
in particular a special functor, we want to see whether this helps us in any
way to understand what happens in the realizability constructions that we
mentioned in the beginning. Of special interest here are geometric morphisms
and subtoposes. 

Geometric morphisms of toposes arise naturally in our setting, because they are
just adjunctions in the 2-category $\cattop$. Adjunctions in $\cattrip$ are a
generalization
of what is usually known as geometric morphism of triposes
(normally~\cite{hjp80}, they are
only considered on a fixed base category). Now we know from
Lemma~\ref{lem_functors_between_regular_dc} that special functors preserve
adjunctions between \tworeg{} pre-equipments, and this gives a conceptual
explanation why the \tttc{} transforms geometric morphisms of triposes into
geometric morphisms of toposes.

Subtoposes coincide with idempotent monads in the 2-category $\cattop$, and
subtriposes coincide (almost, there is again the
question of fixed or varying base for the triposes) with idempotent monads in
$\cattrip$. Monads are not in general preserved by oplax functors
(nor by special functors), but nevertheless (as already described in
\cite{hjp80}) we can construct a local operator on a topos from a local
operator on the corresponding tripos. So how can we understand this? --- From
an abstract point of view, this works because every idempotent monad in
$\cattrip$ can be decomposed into an adjunction with isomorphic counit, so we
can apply the \tttc{} to the decomposition and reassemble the local operator in
$\cattop$ (idempotency is preserved by the second statement of
Lemma~\ref{lem_functors_between_regular_dc}).  For the case of and idempotent
monad $(T,\Theta):\trip\to\trip$ on a tripos $\trip:\flx\to\flc$, the second
tripos of the factorization has as base the full subcategory of $\flc$ on the
objects $C$ where $\eta_C:C\to TC$ is an isomorphism, and the fibers are the
subpreorders of $\trip_C$ on the $\Theta$-stable predicates.

We may also ask what happens to comonads. These are preserved by special
functors simply because they can be defined as oplax functors having as domain,
and oplax functors can be composed. However, it is possible that an
idempotent comonad on a tripos gives rise to a non-idempotent comonad on a
topos. An example of this is the comonad induced by the adjunction
$\delta\dashv\wedge:\catset(-,\bool\times\bool)\to\catset(-,\bool)$ from the
introduction. The induced comonad of triposes, which is given by the fiberwise
operation $(a,b)\mapsto(a\wedge b,a\wedge b)$ on $\bool$ is idempotent, but its
image
\[
 \catset\times\catset\to\catset\times\catset,\qquad (A,B)\mapsto (A\times
B,A\times B)
\]
under $\funt\circ\funs$ is not.

\appendix

\section{Non-extensional higher order intuitionistic logic}\label{sec_nehoil}

In this appendix, we present the logical system
mentioned in Section~\ref{suse_hol_in_triposes}. The derivation rules are
given in Table~\ref{table_non_ext_high_int}.
\begin{table}
\begin{center}
\begin{tabular}{cc}
\unary{\phantom{|}}{}{\Delta\csep\Gamma\vdash\top} &
\unary{\Delta\csep\Gamma\vdash\bot}{}{\Delta\csep\Gamma\vdash\varphi}\\[.5cm]
\binary{\Delta\csep\Gamma\vdash\varphi\!\!}{\!\!\Delta\csep\Gamma\vdash\psi}{}{
\Delta\csep\Gamma\vdash\varphi\wedge\psi} &
\unary{\Delta\csep\Gamma\vdash\varphi_1\wedge\varphi_2}{$i=1,2$}{
\Delta\csep\Gamma\vdash\varphi_i}\\[.5cm]
\unary{\Delta\csep\Gamma\vdash\varphi_i}{$i=1,2$}{
\Delta\csep\Gamma\vdash\varphi_1\vee\varphi_2} &
\ternary{\Delta\csep\Gamma\vdash\varphi\vee\psi\!\!}{\!\!\!\Delta\csep\Gamma,
\varphi\vdash\gamma\!\!\!}{\!\!\Delta\csep\Gamma,\psi\vdash\gamma}{}{
\Delta\csep\Gamma\vdash\gamma}\\[.5cm]
\unary{\Delta\csep\Gamma,\varphi\vdash\psi}{}{
\Delta\csep\Gamma\vdash\varphi\imp\psi} &
\binary{\Delta\csep\Gamma\vdash\varphi\imp\psi}{\Delta\csep\Gamma\vdash\varphi}
{}{\Delta\csep\Gamma\vdash\psi}\\[.5cm]
\unary{\Delta,x\vtp A\csep\Gamma\vdash\xi[x]}{}{\Delta\csep\Gamma\vdash\forall
x\qdot \xi[x]} &
\unary{\Delta\csep\Gamma\vdash\forall x\qdot
\xi[x]}{}{\Delta\csep\Gamma\vdash\xi[t]}\\[.5cm]
\unary{\Delta\csep\Gamma\vdash\xi[t]}{}{\Delta\csep\Gamma\vdash\exists x\qdot
\xi[x]} &
\binary{\Delta\csep\Gamma\vdash\exists x\qdot \xi[x]}{\Delta,x\vtp
A\csep\Gamma,\xi[x]\vdash\psi}{}{\Delta\csep\Gamma\vdash\psi}\\[.5cm]
\unary{{\phantom{|}}}{}{\Delta\csep\Gamma\vdash t=t} &
\binary{\Delta\csep\Theta[s,t]\vdash s=t\!\!\!}{\!\!\!\Delta,x\vtp
A\csep\Theta[x,x]\vdash \rho[x,x]}{}{\Delta\csep\Theta[s,t]\vdash
\rho[s,t]}\\[.7cm]
\end{tabular}
\unary{}{}{\Delta\csep\Gamma\vdash \exists m:\ptype A\; \forall x\in A\qdot
x\in m\leftrightarrow \xi[x]}\\[4mm]
 \unary{}{}{\Delta,x\vtp A,y\vtp B\csep\Gamma\vdash \exists ! z\vtp A\times
B\qdot \pi_1(z)=x\wedge\pi_2(z)=y}
\end{center}
$\Delta = x_1\vtp A_1,\dots,x_n\vtp A_n$ is a context of variables,\\
$\Gamma$ is a list of formulas in context $\Delta$,\\
$\varphi,\varphi_1,\varphi_2,\psi,\gamma$ are formulas in context $\Delta$,\\
$\xi[x]$ is a formula in context $(\Delta, x\vtp A)$,\\
$\Theta[x,y]$ is a list of formulas in context $(\Delta, x\vtp A, y\vtp A)$,
\\
$\rho[x,y]$ is a formula in context $(\Delta, x\vtp A, y\vtp A)$, and \\
$s,t:A$ are terms in context $\Delta$.
\caption{Deduction rules of non-extensional higher order intuitionistic
logic.}\label{table_non_ext_high_int}
\end{table}
Observe that the variable conditions for quantification and equality, that are
normally perceived as somewhat disturbing, arise naturally in the presence of
explicit variable contexts. The equality rules are equivalent to the
traditional ones, but are meant to resemble the rules for existential
quantification since from the categorical point of view, equality is
existential quantification along diagonals%

This system is called non-extensional since there is no axiom or rule that
states that two sets ($=$ individuals of power type) are equal whenever they
have the same elements.

\subsection*{Acknowledgements}

This article is partially based on my master thesis, which I wrote under
supervision of
Thomas Streicher. I want to thank him for introducing me to categorical logic
in general and in particular for his support on this project that manifested in
countless discussions.

I want  to thank Ignacio Fernando Lopez, John Baez and Martin Hyland for
pointers to related work in connection with equipments, and Bas
Spitters who made me aware of Maietti and Rosolini's work~\cite{rosmai08}.

Furthermore, thanks to Dorette Pronk for discussions on double categories at
the \emph{International Category Theory Conference CT2010} in Genoa, and to
Pino Rosolini for some helpful comments at the same
conference.

Special thanks to Dominic Verity for providing me a copy of his thesis, and
for very friendly and helpful feedback to my questions via mail.

Finally, I want thank to Paul-André Melliès for encouraging me to write
this article, and for support and advice in the process of writing.

\bibliographystyle{plain}
\bibliography{bib}
\end{document}